\documentclass[12pt]{article}
\usepackage{etex}
\usepackage[all]{xy}
\usepackage{cite}
\usepackage{amsfonts}
\usepackage{amsthm}
\usepackage{enumerate}
\usepackage{graphicx}
\usepackage{mathrsfs}
\usepackage{bm}
\usepackage{amssymb,amsmath} 
\usepackage{makecell}
\usepackage{tikz}
\usepackage{pgf}
\usepackage{tikz}
\usetikzlibrary{patterns}
\usepackage{pgffor}
\usepackage{pgfcalendar}
\usepackage{pgfpages}
\usepackage{shuffle,yfonts}
\usepackage{mathtools}
\DeclareFontFamily{U}{shuffle}{}
\DeclareFontShape{U}{shuffle}{m}{n}{ <-8>shuffle7 <8->shuffle10}{}

\newcommand{\nc}{\newcommand}
\nc{\AMZV}{\mathsf {AMZV}}
\nc{\ud}{\mathrm{d}}
\nc{\ES}{\mathsf {ES}}
\nc{\MZV}{\mathsf {MZV}}
\nc{\MtV}{\mathsf {MtV}}
\nc{\MTV}{\mathsf {MTV}}
\nc{\MSV}{\mathsf {MSV}}
\nc{\MMV}{\mathsf {MMV}}
\nc{\MMVo}{\mathsf {MMVo}}
\nc{\MMVe}{\mathsf {MMVe}}
\nc{\AMMV}{\mathsf {AMMV}}
\nc{\AMTV}{\mathsf {AMTV}}
\nc{\AMtV}{\mathsf {AMtV}}
\nc{\AMSV}{\mathsf {AMSV}}
\nc{\CMZV}{\mathsf {CMZV}}
\nc{\sha}{\shuffle}
\nc{\cst}{\rotatebox[origin=c]{180}{$\sha$}}
\nc{\cstt}{\rotatebox[origin=c]{180}{$\scriptstyle \sha$}}
\nc{\de}{\delta}
\nc{\DD}{{\mathbb D}}
\nc{\anbb}[1]{\left\langle#1\right\rangle}
\nc{\bibb}[1]{\left\{#1\right\}}
\nc{\mibb}[1]{\left[#1\right]}
\nc{\smbb}[1]{\left(#1\right)}
\nc{\doubb}[1]{\llbracket#1\rrbracket}
\nc{\dm}[1]{\left|#1\right|}

\nc{\Gbinom}[2]{\genfrac{(}{)}{0mm}{0}{#1}{#2}}
\nc{\gbinom}[2]{\genfrac{(}{)}{0mm}{1}{#1}{#2}}
\nc{\Rbinom}[2]{\genfrac{\langle}{\rangle}{0mm}{0}{#1}{#2}}
\nc{\rbinom}[2]{\genfrac{\langle}{\rangle}{0mm}{1}{#1}{#2}}
\nc{\Qbinom}[2]{\genfrac{[}{]}{0mm}{0}{#1}{#2}_q}
\nc{\qbinom}[2]{\genfrac{[}{]}{0mm}{1}{#1}{#2}_q}

\nc{\binq}[2]{\genfrac{[}{]}{0mm}{0}{#1}{#2}}
\nc{\tbnq}[2]{\genfrac{[}{]}{0mm}{1}{#1}{#2}}
\nc{\cinq}[2]{\genfrac{\{}{\}}{0mm}{0}{#1}{#2}}
\nc{\tcnq}[2]{\genfrac{\{}{\}}{0mm}{1}{#1}{#2}}

\nc{\mfrac}[2]{\genfrac{}{}{0pt}{}{#1}{#2}}
\nc{\tf}{\tfrac}
\nc{\db}{{\mathbb D}}
\nc{\pari}{{\rm par}}
\nc{\dk}{{\mathbb K}}
\nc{\ola}{\overleftarrow}
\nc{\ora}{\overrightarrow}
\nc{\lra}{\longrightarrow}
\nc{\Lra}{\Longrightarrow}
\nc\Res{{\rm Res}}
\nc\setX{{\mathsf{X}}}
\nc\fA{{\mathfrak{A}}}
\nc\evaM{{\texttt{M}}}
\nc\evaML{{\text{\em{\texttt{M}}}}}
\nc\z{{\texttt{z}}}
\nc\emz{\emph{\texttt{z}}}
\nc\tx{{\texttt{x}}}
\nc\txp{{\tx_1}} 
\nc\txn{{\tx_{-1}}} 
\nc\neo{{1}}
\nc{\yi}{{1}}
\nc\one{{-1}}
\nc\gD{{\Delta}}
\nc\eps{{\varepsilon}}
\nc{\bfMB}{{\bf MB}}
\nc{\bftB}{{\bf tB}}
\nc{\bfTB}{{\bf TB}}
\nc{\bfSB}{{\bf SB}}
\nc{\bfB}{{\bf B}}
\nc{\bfp}{{\bf p}}
\nc{\bfq}{{\bf q}}
\nc{\bfr}{{\bf r}}
\nc{\bfu}{{\bf u}}
\nc{\bfv}{{\bf v}}
\nc{\bfw}{{\bf w}}
\nc{\bfy}{{\bf y}}
\nc{\T}{\ddot{t}}
\nc{\bfe}{{\boldsymbol{\sl{e}}}}
\nc{\bfi}{{\boldsymbol{\sl{i}}}}
\nc{\bfj}{{\boldsymbol{\sl{j}}}}
\nc{\bfk}{{\boldsymbol{\sl{k}}}}
\nc{\bfl}{{\boldsymbol{\sl{l}}}}
\nc{\bfm}{{\boldsymbol{\sl{m}}}}
\nc{\bfn}{{\boldsymbol{\sl{n}}}}
\nc{\bfs}{{\boldsymbol{\sl{s}}}}
\nc{\bft}{{\boldsymbol{\sl{t}}}}
\nc{\bfx}{{\boldsymbol{\sl{x}}}}
\nc{\bfz}{{\boldsymbol{\sl{z}}}}
\nc\bfgs{{\boldsymbol \gs}}
\nc\bfgl{{\boldsymbol \lambda}}
\nc\bfsi{{\boldsymbol \gs}}
\nc\bfet{{\boldsymbol \eta}}
\nc\bfeta{{\boldsymbol \eta}}
\nc\bfeps{{\boldsymbol \eps}}
\nc\mmu{{\boldsymbol \mu}}
\nc\bfone{{\bf 1}}
\nc{\myone}{{1}}

 \nc{\calA}{{\mathcal A}}
 \nc{\calB}{{\mathcal B}}
 \nc{\calC}{{\mathcal C}}
 \nc{\calD}{{\mathcal D}}
 \nc{\calE}{{\mathcal E}}
 \nc{\calF}{{\mathcal F}}
 \nc{\calG}{{\mathcal G}}
 \nc{\calH}{{\mathcal H}}
 \nc{\calI}{{\mathcal I}}
 \nc{\calJ}{{\mathcal J}}
 \nc{\calK}{{\mathcal K}}
 \nc{\calL}{{\mathcal L}}
 \nc{\calM}{{\mathcal M}}
 \nc{\calN}{{\mathcal N}}
 \nc{\calO}{{\mathcal O}}
 \nc{\calP}{{\mathcal P}}
 \nc{\calQ}{{\mathcal Q}}
 \nc{\calR}{{\mathcal R}}
 \nc{\calS}{{\mathcal S}}
 \nc{\calT}{{\mathcal T}}
 \nc{\calU}{{\mathcal U}}
 \nc{\calV}{{\mathcal V}}
 \nc{\calW}{{\mathcal W}}
 \nc{\calX}{{\mathcal X}}
 \nc{\calY}{{\mathcal Y}}
 \nc{\calZ}{{\mathcal Z}}
  \nc{\cala}{{\mathcal a}}
 \nc{\calb}{{\mathcal b}}
 \nc{\calc}{{\mathcal c}}
 \nc{\cald}{{\mathcal d}}
 \nc{\cale}{{\mathcal e}}
 \nc{\calf}{{\mathcal f}}
 \nc{\calg}{{\mathcal g}}
 \nc{\calh}{{\mathcal h}}
 \nc{\cali}{{\mathcal i}}
 \nc{\calj}{{\mathcal j}}
 \nc{\calk}{{\mathcal k}}
 \nc{\call}{{\mathcal l}}
 \nc{\calm}{{\mathcal m}}
 \nc{\caln}{{\mathcal n}}
 \nc{\calo}{{\mathcal o}}
 \nc{\calp}{{\mathsf p}}
 \nc{\calq}{{\mathcal q}}
 \nc{\calr}{{\mathcal r}}
 \nc{\cals}{{\mathcal s}}
 \nc{\calt}{{\mathcal t}}
 \nc{\calu}{{\mathcal u}}
 \nc{\calv}{{\mathcal v}}
 \nc{\calw}{{\mathcal w}}
 \nc{\calx}{{\mathcal x}}
 \nc{\caly}{{\mathcal y}}
 \nc{\calz}{{\mathcal z}}
 \nc{\ot}{{\otimes}}
\usetikzlibrary{arrows,shapes,chains}
\textwidth=160truemm \textheight=226truemm \evensidemargin=0mm
\oddsidemargin=0mm \topmargin=-2mm \headsep=0mm
\parindent=2em
 \allowdisplaybreaks

\catcode`!=11
\let\!int\int \def\int{\displaystyle\!int}
\let\!lim\lim \def\lim{\displaystyle\!lim}
\let\!sum\sum \def\sum{\displaystyle\!sum}
\let\!sup\sup \def\sup{\displaystyle\!sup}
\let\!inf\inf \def\inf{\displaystyle\!inf}
\let\!cap\cap \def\cap{\displaystyle\!cap}
\let\!max\max \def\max{\displaystyle\!max}
\let\!min\min \def\min{\displaystyle\!min}
\let\!frac\frac \def\frac{\displaystyle\!frac}
\catcode`!=12

\nc{\gam}{{\gamma}}
\nc{\gG}{{\Gamma}}
\nc{\om}{{\omega}}
\nc{\vep}{{\varepsilon}}
\nc{\ga}{{\alpha}}
\nc{\gl}{{\lambda}}
\nc{\gb}{{\beta}}
\nc{\gd}{{\delta}}
\nc{\gf}{{\varphi}}
\nc{\gs}{{\sigma}}
\nc{\gk}{{\kappa}}
\nc{\gS}{\Sigma}
\let\oldsection\section
\renewcommand\section{\setcounter{equation}{0}\oldsection}

\allowdisplaybreaks

\DeclareMathOperator*{\dep}{dep}

\DeclareMathOperator{\dch}{dch}

\nc\UU{\mbox{\bfseries U}}
\nc\FF{\mbox{\bfseries \itshape F}}
\nc\h{\mbox{\bfseries \itshape h}}\nc\dd{\mbox{d}}
\nc\g{\mbox{\bfseries \itshape g}}
\nc\xx{\mbox{\bfseries \itshape x}}

\def\N{\mathbb{N}}
\def\Z{\mathbb{Z}}
\def\Q{\mathbb{Q}}

\def\xx{\left(\frac{1-x}{1+x} \right)}

\def\ol{\overline}
\nc\divg{{\text{div}}}
\theoremstyle{plain}
\newtheorem{thm}{Theorem}[section]
\newtheorem{lem}[thm]{Lemma}

\newtheorem{conj}[thm]{Conjecture}
\newtheorem{prop}[thm]{Proposition}
\theoremstyle{definition}

\newtheorem{re}[thm]{Remark}

\nc{\cicc}[1]{{}_{{}^{ \bigcirc\hskip-1.2ex{#1}\hskip.3ex{}}}}
\nc{\cic}[1]{{}^{\bigcirc\hskip-1.15ex{\raisebox{-0.015cm}{\text{$\scriptscriptstyle #1$}}}\hskip.25ex{}}}
\nc{\ccic}[1]{{}^{\bigcirc\hskip-1.5ex{\raisebox{-0.015cm}{\text{$\scriptscriptstyle #1$}}}\hskip.25ex{}}}
\nc{\ncic}[1]{ {\bigcirc\hskip-1.6ex{\raisebox{-0.0cm}{\text{$\scriptstyle #1$}}}\hskip.25ex{}}}
\nc{\nncic}[1]{ {\bigcirc\hskip-2ex{\raisebox{-0.0cm}{\text{$\scriptstyle #1$}}}\hskip.25ex{}}}
\nc{\cci}[1]{{}_{{}^{ {\textstyle \bigcirc}\hskip-2.05ex{#1}\hskip-.35ex{}}}}
\nc{\ccicc}[1]{{}_{{}^{ {\textstyle \bigcirc}\hskip-1.55ex{#1}\hskip-0.1ex{}}}}
\nc{\x}{\rm{x}}
\nc{\tworow}[2]{\left(#1 \atop #2\right)}

\nc{\fl}{{\mathfrak l}}
\nc{\fm}{{\mathfrak m}}

\setlength{\arraycolsep}{0.5mm}

\begin{document}
\title{\bf On Some Unramified Families of Motivic Euler Sums}
\author{{Ce Xu${}^{a,}$\thanks{Email: cexu2020@ahnu.edu.cn} \quad and \quad Jianqiang Zhao${}^{b,}$\thanks{Email: zhaoj@ihes.fr}}\\[1mm]
\small a. School of Mathematics and Statistics, Anhui Normal University,  Wuhu 241002, P.R. China\\
\small b. Department of Mathematics, The Bishop's School, La Jolla,  CA 92037, USA}

\date{}
\maketitle
\noindent{\bf Abstract.} It is well known that sometimes Euler sums (i.e., alternating multiple zeta values) can be expressed as $\Q$-linear combinations of multiple zeta values (MZVs). In her thesis Glanois presented a criterion for motivic Euler sums to be unramified, namely, expressible as $\Q$-linear combinations of motivic MZVs. By applying this criterion we present a few families of such unramified motivic Euler sums in two groups. In one such group we can further prove the concrete identities relating the motivic Euler sums to the motivic MZVs, determined up to rational multiple of a motivic Riemann zeta value by a result of Brown, under the assumption that the analytic version of such identities hold.

\medskip \noindent{\bf Keywords}: (motivic) multiple zeta values, (motivic) Euler sums.

\medskip \noindent{\bf AMS Subject Classifications (2020):} 11M32; 11M99.

\section{Introduction}
The ubiquitous nature of multiple zeta values (MZVs) has attracted many mathematicians and theoretical physicists in recent years after the seminal works of Zagier \cite{DZ1994} and Hoffman \cite{H1992}. Its higher level generalization is given by the \emph{colored multiple zeta values} (CMZVs) of level $N$ defined as follows. Let $\N$ be the set of positive integers and $\N_0=\N\cup\{0\}$.
For any composition $(s_1,\dots,s_d)\in\N^d$ and $N$-th roots of unity $(\eps_1,\dots,\eps_d)$ we define
\begin{equation*}
\zeta\tworow{s_1,\dots,s_d}{\eps_1,\dots,\eps_d}:=\sum_{0<k_1<\dots<k_d} \frac{\eps_1^{k_1}\cdots\eps_d^{k_d}}{k_1^{s_1}\cdots k_d^{s_d}}.
\end{equation*}
To guarantee convergence we impose the condition that $(s_d,\eps_d)\ne (1,1)$.

The CMZVs have played a pivotal role in the theory of mixed Tate motives over $\Z[\mu_N][1/N]$ (resp. $\Z[\mu_N]$), where $\mu_N=\exp(2\pi i/N)$, for $N=1,2,4,6,8$ (resp. $N=6$) as manifested by the works \cite{Brown2012,Deligne2010,Glanois2015}. In fact, they first appeared unexpectedly in the computation of Feynman integrals in the 1990s. In particular, the level two MZVs, sometimes also called \emph{Euler sums}, have been studied quite intensively in \cite{BlumleinBrVe2010,Broadhurst1996a,Glanois2016,JinLi2018}. To save space, if $\eps_j=-1$ then we conventionally put a bar on top of $s_j$. If such an Euler sum can be expressed in terms of the MZVs then we say it is an \emph{unramified} Euler sum because the corresponding motivic version is unramified. These values are also called honorary MZVs by Broadhurst.  For example, we have the following beautiful unramified family of Euler sums discovered numerically in \cite{BorweinBrBr1997} and proved by the second author \cite{Zhao2010a}
\begin{equation}\label{equ:12bar}
  8^\ell   \zeta(\{1,\bar2\}_\ell)=\zeta(3_\ell)
\end{equation}
for all $\ell\in\N$. Here and in the rest of the paper, $\bfs_n$ means the string $\bfs$ is repeated $n$ times and if the string has only one number then we remove the curly brackets to save space.

Our goal in this paper is to lift \eqref{equ:12bar} to its motivic version and then provide a few more families of unramified 
motivic Euler sums in two groups. Let $\zeta^\fm(\bfs)$ be the motivic version of the Euler sum $\zeta(\bfs)$
and $\zeta_a^\fm(\bfs)$ the shuffle regularized motivic Euler sum for all $a\in\N$
(see next section for precise definition).  Then the first group is presented below.

\begin{thm} \label{thm:2bar3Motivic}
For all integers $a, \ell\ge 0$ the motivic Euler sums 
\begin{align}\label{equ:2bar3original}
&\zeta_a^\fm(\{\bar2,3\}_\ell), \qquad \zeta_a^\fm(\{\bar2,3\}_\ell,\bar2),  \\
&\zeta_a^\fm(\{3,\bar2\}_\ell), \qquad \zeta_a^\fm(\{3,\bar2\}_\ell,3), \label{equ:13bar2original}
\end{align}
are all unramified.
\end{thm}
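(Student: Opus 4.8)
The plan is to apply Glanois's unramifiedness criterion, which detects when a motivic Euler sum lies in the subspace generated by motivic MZVs by examining the action of the coaction (or equivalently the infinitesimal derivations $D_r$ for odd $r\ge 3$ together with the level-two derivation). Concretely, I would first recall that the motivic Euler sums in question all live in the level-two graded Hopf algebra comodule, and that by Glanois's result a motivic Euler sum $\zeta^\fm(\bfs)$ is unramified precisely when the "odd part" of its coaction — the component that would pick up a genuinely level-two (i.e.\ bar-carrying) motivic period of weight $1$, namely $\log^\fm(2)$ — vanishes. So the crux is to compute $D_1$ (the weight-one infinitesimal coaction component landing in $\Q\,\log^\fm 2$) applied to each of the four families in \eqref{equ:2bar3original}–\eqref{equ:13bar2original}, and show it is zero; the remaining higher $D_r$ components automatically land back in spaces of the same shape by an induction on depth/weight, so closure under those is the easy bookkeeping part.

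The key steps, in order, would be: (1) Set up the iterated-integral (binary word) representation of $\zeta_a^\fm(\{\bar2,3\}_\ell)$ etc., tracking how the shuffle regularization parameter $a$ interacts with the leading $1$'s; this reduces the $a>0$ cases to the $a=0$ cases plus products, by the standard shuffle-regularization formula, so it suffices to treat $a=0$. (2) Write Goncharov's formula for the coaction on these words and isolate the weight-one left (or right) factor: each cut of length one either produces a term proportional to $\zeta^\fm(\bar 1)=-\log^\fm 2$ or to $\zeta^\fm(1)$ (which is $0$ in the regularized sense) or to $0$. (3) Collect the coefficient of $\log^\fm 2$: this is a signed count of positions, and the claim is that the contributions cancel in pairs because of the rigid alternating pattern $\bar 2,3,\bar 2,3,\dots$ (the blocks $\{3,\bar2\}$ versus $\{\bar2,3\}$ are arranged so that a sign flip at one boundary is compensated at the adjacent one). (4) Conclude via Glanois's criterion that each family is unramified, and finally run the depth/weight induction closing the family under the full coaction to invoke the criterion in its comodule form rather than just in weight one.

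The main obstacle I expect is step (3): showing the $\log^\fm 2$-coefficient vanishes is a genuine combinatorial identity about the specific interleaving of $\bar 2$'s and $3$'s, and the four sub-cases (with or without the trailing $\bar 2$ or $3$) have slightly different boundary terms that must each be checked — it is precisely the boundary behaviour that distinguishes $\{\bar2,3\}_\ell$ from $\{\bar2,3\}_\ell,\bar2$ and from the reversed families. A secondary subtlety is making sure the $a$-dependence really does collapse cleanly: one must verify that the products introduced by shuffle regularization only involve lower-weight motivic Euler sums that are themselves already known (inductively) to be unramified, so that unramifiedness propagates up in $a$. I would organize the write-up so that the $\log^\fm 2$-coefficient computation is done once, uniformly, with the four boundary variants appearing as four explicit endpoint corrections, and then the inductive closure argument is stated in a single lemma reused across all families.
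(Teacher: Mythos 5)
There is a genuine gap, and it lies exactly where you declare the work to be ``easy bookkeeping.'' Glanois's criterion (Theorem~\ref{thm-Glanois} in the paper) is \emph{not} that vanishing of the weight-one component $D_1$ alone forces unramifiedness: it requires $D_1\zeta_a^\fm(\bfs)=0$ \emph{and} $D_r\zeta_a^\fm(\bfs)\in\calL_r\ot\calH_{k-r}$ for every odd $r<k$, i.e.\ both the subsequence and quotient factors of every surviving cut must themselves be unramified. In the families at hand the $D_1$ computation you plan in steps (2)--(3) is actually trivial: since no entry $\bar1$ occurs, no two adjacent nonzero letters of opposite sign can appear in the word, and $D_1=0$ follows in one line (Lemma~\ref{lem:D1}); there is no delicate signed count of $\log^\fm 2$ contributions to perform. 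By contrast, the statement that the higher $D_r$ ``automatically land back in spaces of the same shape'' is false as stated and is precisely what must be proved. The paper's Remark after Theorem~\ref{thm:bar21Motivic} illustrates the danger: $\zeta(\bar2,3,\bar2,5)$ also has $D_1=0$ for the same reason, yet is likely \emph{not} unramified. The real content of the proof is the cut-by-cut analysis of $D_r$ for $r=10n+3,10n+5,10n+7,10n+9,10n+11$: most cuts cancel in pairs by path reversal (I5) and homothety (I6), and the surviving terms must be identified as tensor products of members of the same families with smaller $\ell$. Carrying this out forces one to enlarge the induction to a ``quadruple'' induction including the regularized families $\zeta_1^\fm(\{\bar2,3\}_\ell)$, $\zeta_1^\fm(\{\bar2,3\}_\ell,\bar2)$, $\zeta_1^\fm(\{3,\bar2\}_\ell)$, $\zeta_1^\fm(\{3,\bar2\}_\ell,3)$, because these occur as left factors of uncancelled cuts; your plan never produces these auxiliary families.

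Your reduction of $a>0$ to $a=0$ also fails. The shuffle-regularization identity (I3) rewrites $\zeta_a^\fm(\{\bar2,3\}_\ell)$ as a combination of $\zeta^\fm$'s whose exponents are \emph{shifted upward} (e.g.\ $\zeta_1^\fm(\bar2,3)=-(2\zeta^\fm(\bar3,3)+3\zeta^\fm(\bar2,4))$), and these lie outside the four families, so their unramifiedness is not known inductively and does not follow from the $a=0$ cases; the $a$-dependence does not ``collapse cleanly.'' The paper instead keeps the block of $a$ leading zeros inside the iterated integral and runs a double induction on $a$ and $\ell$, checking separately the cuts that begin inside the initial zero block (left factor of the theorem's form with smaller $\ell$, right factor of the $a\le 1$ base type) and the cuts that begin after the first nonzero letter, together with extra base cases $\zeta_a^\fm(\bar2)$, $\zeta_a^\fm(\bar2,3)$, $\zeta_a^\fm(\bar2,3,\bar2)$, $\zeta_a^\fm(3,\bar2)$, $\zeta_a^\fm(3,\bar2,3)$ for arbitrary $a$. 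To repair your proposal you would need to (i) state and use the criterion in its full form, (ii) supply the cancellation mechanism and identify the surviving cuts, enlarging the family to include the $\zeta_1^\fm$ variants, and (iii) replace the regularization shortcut in $a$ by a genuine induction on the leading zeros.
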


Using exactly the same approach we may verify the second group of unramified families
which have the the following form.

\begin{thm} \label{thm:bar21Motivic}
For all integers $a, \ell\ge 0$ the motivic Euler sums 
\begin{equation*}
\zeta_a^\fm(\{\bar2,1\}_\ell), \quad \zeta_a^\fm(\{\bar2,1\}_\ell,\bar2),   \quad\zeta_a^\fm(\{1,\bar2\}_\ell), \quad \zeta_a^\fm(\{1,\bar2\}_\ell,1),
\end{equation*}
are all unramified.
\end{thm}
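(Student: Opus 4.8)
The plan is to mirror the proof of Theorem~\ref{thm:2bar3Motivic}, replacing the patterns $\{\bar2,3\}$ and $\{3,\bar2\}$ there by $\{\bar2,1\}$ and $\{1,\bar2\}$ here (so $3$ is played by $1$). First I would recall the iterated‑integral model (Section~2): each of the four sums is a shuffle‑regularized motivic iterated integral $I^\fm(0;\cdot;1)$ over the alphabet $\{0,1,-1\}$, whose word is built by concatenating the blocks attached to the entries — a block ``$u,0$'' for each $\bar2$ and a single colour letter ``$u$'' for each $1$, with $u\in\{1,-1\}$ the running sign $\prod_{i\ge j}\eps_i$. The arithmetic that makes these four patterns special is that whenever $s_j=1$ one has $\eps_j=+1$, so the two colour letters flanking that entry carry the same colour; hence along every one of the four words a letter $1$ is never adjacent to a letter $-1$, and neither endpoint $0$ nor $1$ sits next to a configuration giving a ratio $\pm2$.

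Next I would apply Glanois's unramifiedness criterion (Section~2). Since the motivic Lie coalgebra for $\mathsf{MT}(\Z[1/2])$ has, over the one for $\mathsf{MT}(\Z)$, exactly one extra generator, and it sits in weight $1$ (Deligne), the criterion for a motivic Euler sum $\xi$ unravels to: $D_1(\xi)=0$, and — by induction on the weight — the right‑hand tensor factors of $D_r(\xi)$ are again unramified for every $r\ge2$; indeed for $r\ge2$ the left‑hand factor automatically points in the motivic‑MZV direction of the Lie coalgebra, so there is nothing to check on that side. For the first condition, Goncharov's coaction formula shows that $D_1$ only involves the weight‑one subquotients $I^\fl(a_{p-1};a_p;a_{p+1})$, each of which is $\log$ of a ratio of differences of neighbouring letters; by the colour pattern just noted every such ratio is $\pm1$ — so the term vanishes — or is a boundary ratio for which the shuffle‑regularization conventions force the value $0$. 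Thus $D_1$ of each family member vanishes, once the regularization index $a$ has been tracked.

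It remains to control, for $r\ge2$, the right‑hand factors, i.e.\ the words obtained by deleting $r$ consecutive letters. The combinatorial point is that any such deletion either merges two adjacent colour blocks — replacing the junction signs $\eps_{j-1},\eps_j$ by their product $\eps_{j-1}\eps_j$ — or truncates a single block, and in each case, after using the elementary relations (notably $\zeta^\fm(\bar2)=-\tfrac12\zeta^\fm(2)$ together with the stuffle product, which already reduces every Euler sum whose entries are all $2$ or $\bar2$ to $\Q[\zeta^\fm(2)]$), the resulting integral is a $\Q$‑linear combination of products of motivic MZVs and of members of the same four families of strictly smaller weight, with a possibly shifted index $a$. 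A simultaneous induction on $(\ell,a)$ and the weight then closes: the base cases $\ell=0$ are $\zeta_a^\fm(\emptyset)$, $\zeta_a^\fm(\bar2)$ and $\zeta_a^\fm(1)$, all visibly unramified, and the inductive step combines $D_1=0$ with the induction hypothesis on the right‑hand factors, so Glanois's criterion holds and all four families lie in the motivic‑MZV algebra.

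The $D_1=0$ cancellation is the easy ingredient — it is forced by the sign pattern of $\{\bar2,1\}$ and $\{1,\bar2\}$ and would genuinely fail for a generic alternating family (one containing a $1$ beside a $-1$). The main obstacle is the closure statement of the third paragraph: verifying that ``delete a consecutive block of letters'' keeps one inside the four families together with products, motivic MZVs, and the trivially unramified $\{2,\bar2\}$‑type sums — up to the elementary weight‑$2$ relation and the boundary terms of the shuffle regularization — precisely as in the proof of Theorem~\ref{thm:2bar3Motivic}; this is where matching the colour/sign conventions and bookkeeping the parameter $a$ through the coaction demand real care. (As in the first group one could moreover pin down the resulting $\Q$‑multiple of a motivic Riemann zeta value via Brown's theorem and the analytic identity~\eqref{equ:12bar} and its analogues, but that is not needed for the unramifiedness assertion of Theorem~\ref{thm:bar21Motivic}.)
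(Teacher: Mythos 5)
Your overall plan (Glanois's criterion plus an induction mirroring the proof of Theorem~\ref{thm:2bar3Motivic}) is the right one, but there is a genuine error in your second paragraph: the claim that for $r\ge 2$ ``the left-hand factor automatically points in the motivic-MZV direction of the Lie coalgebra, so there is nothing to check on that side.'' The extra generator of the motivic Lie algebra of $\mathcal{MT}(\Z[1/2])$ does sit in degree $1$, but the Lie coalgebra $\calL^2$ is dual to the whole free Lie algebra on generators in degrees $1,3,5,\dots$, not just to its generators; in every degree $r\ge 4$ it contains classes dual to brackets involving the degree-one generator (for instance the class of $\zeta^{\fl}(\bar3,1)$ in degree $4$, and a two-dimensional space in degree $5$), and these classes do not lie in $\calL_r=\calL^1_r$. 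Consequently Theorem~\ref{thm-Glanois} requires $D_r\zeta_a^\fm(\bfs)\in\calL_r\ot\calH_{k-r}$ with the \emph{left} factor unramified as well, and an element satisfying only ``$D_1=0$ together with unramified right factors'' can perfectly well be ramified. This is precisely why the paper's induction carries all four strings (and their $\zeta_a^\fm$ variants) simultaneously: the surviving left factors, e.g.\ $\zeta^{\fl}(\{\bar2,1\}_{2n+1})$ or $\zeta^{\fl}(\{1,\bar2\}_{2n+1})$, or $\zeta_b^{\fl}$ of such strings when the cut starts inside the leading block of $a+1$ zeros, must themselves be recognized as family members already known unramified at smaller $\ell$ or smaller $a$. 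With your weakened criterion the proof does not establish unramifiedness.

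Second, your closure statement in the third paragraph is asserted rather than proved, and cut-by-cut it is false: deleting $r$ consecutive letters from the middle of $\rho(\{\bar2,1\}_\ell,\dots)$ generally yields a quotient word that is \emph{not} of the four shapes, nor a product of MZVs with such. What makes the scheme work in the paper is the cancellation mechanism, which is the actual content of the argument: cuts pair off via path reversal (I5) and homothety (I6), the period-$6$ structure of these words kills $D_{6n+5}$ outright, and only cuts anchored at the initial zero(s) or ending at the final $1$ survive; those, and only those, have sub- and quotient sequences of the required shapes (compare the computations in Sections 5--6, e.g.\ $D_{6n+3}\zeta^\fm(\{\bar2,1\}_k,\bar2)=\zeta^{\fl}(\{\bar2,1\}_{2n+1})\ot\zeta^\fm(\{\bar2,1\}_{k-2n-1},\bar2)$, and the handling of the initial $0$-block for general $a$ in Section 4). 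Until you supply this cancellation analysis and restore the left-factor condition in the criterion, the proposal is a plausible outline of the paper's method rather than a proof of Theorem~\ref{thm:bar21Motivic}.
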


\begin{re}
In \cite{Glanois2015,Glanois2016} Glanois proved that an interpolated version of motivic Euler sums denoted by $\zeta^{\sharp,\fm}(\bfs)$
are all unramified if $\bfs=(\{\ol{\text{even}},\text{odd}\}_\ell)$ where the even components can differ and so can the odd ones.
This is unlikely true for the motivic Euler sums in general. Numerical computation using the LLL algorithm shows that it is unlikely that
$\zeta(\bar2,3,\bar2,5)$ is in the weight 12 piece of the MZV space.
\end{re}

The main results obtained in this paper can be regarded as some good evidence for the following conjecture.

\begin{conj}
Let $a\in\N_0$, $\ell\in\N$, and $u_j,v_j\in X:=\{1,\bar2,3,\bar4,\dots\}$ for $j=1,\dots,\ell$ such that
$1\not\in \{u_j: 1\le j\le \ell\}\cap \{v_j: 1\le j\le \ell\}$. Then
\begin{equation*}
\sum_{\gs,\tau\in{\mathfrak S}_\ell} \zeta_a^\fm(u_{\gs(1)},v_{\tau(1)},\dots,u_{\gs(\ell)},v_{\tau(\ell)})
\end{equation*}
and
\begin{equation*}
\sum_{\gs\in{\mathfrak S}_\ell,\tau\in{\mathfrak S}_{\ell-1}} \zeta_a^\fm(u_{\gs(1)},v_{\tau(1)},\dots,v_{\tau(\ell-1)},u_{\gs(\ell)}).
\end{equation*}
are both unramified, where ${\mathfrak S}_\ell$ is the symmetry group of $\ell$ letters.
\end{conj}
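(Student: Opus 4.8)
\medskip
\noindent\textbf{A strategy toward the conjecture.}
The plan is to run, for the symmetrized sums, the same coaction machinery that establishes Theorems~\ref{thm:2bar3Motivic} and \ref{thm:bar21Motivic}, which are the special cases in which all the $u_j$ are equal and all the $v_j$ are equal (to small weights), so that each symmetrized sum collapses to $(\ell!)^2$, resp.\ $\ell!(\ell-1)!$, times a single motivic Euler sum. The substance of the general statement is that the two permutation sums are precisely what is needed to cancel the ramified part of the coaction, even when no single term is unramified (compare the Remark, where $\zeta(\ol2,3,\ol2,5)$ is conjecturally ramified, so the $\tau$-sum is indispensable). Recall Glanois's criterion: a motivic Euler sum $\xi$ of weight $w$ lies in $\calH^{\MZV}$ if and only if, for every $r\ge1$, the weight-$r$ infinitesimal coaction $D_r\colon\calH^{\ES}\to\calL_r\otimes\calH^{\ES}$ sends $\xi$ into $\calL_r^{\MZV}\otimes\calH^{\ES}$, where $\calL$ is the Lie coalgebra of indecomposables of the de Rham coaction algebra $\calA^{\ES}$ and $\calL_r^{\MZV}$ the image of the motivic MZV coalgebra in weight $r$; for $r=1$ this forces the $\log^\fm2$-component of $\xi$ to vanish, since $\calL_1^{\MZV}=0$. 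I would prove the conjecture by induction on $\ell$ (equivalently on the weight), the cases $\ell\le1$ being a direct computation of the coaction in depth $\le2$ that already reaches beyond Theorems~\ref{thm:2bar3Motivic}--\ref{thm:bar21Motivic}.

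\medskip
The core is the evaluation of $D_r$ on each symmetrized sum. Passing to iterated integrals $I^\fm(0;\cdots;1)$ in the alphabet $\{\om_0,\om_1,\om_{-1}\}$, an entry $1\in X$ contributes one letter $\om_{\pm1}$, an odd entry $2k+1$ contributes the block $\om_0^{2k}\om_{\pm1}$, and an even barred entry $\ol{2k}$ the block $\om_0^{2k-1}\om_{\pm1}$, the signs running through the partial products of the $\eps_j$; the structural feature of $X$ that I would exploit is that the parity of each run of $\om_0$'s is rigidly tied to whether the next letter is $\om_1$ or $\om_{-1}$. The operator $D_r$ excises a window of $r$ consecutive letters and tensors the weight-$r$ slice $I^\fm(\text{left end};\ \text{window};\ \text{right end})$ with the weight-$(w-r)$ quotient obtained by re-gluing the remaining letters. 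I would then show: (i) after summing over all $\gs,\tau$ (with the appropriate ${\mathfrak S}_\ell\times{\mathfrak S}_\ell$, resp.\ ${\mathfrak S}_\ell\times{\mathfrak S}_{\ell-1}$, symmetrization), the coefficient of every slice that is not a motivic MZV --- every slice in which $\om_{-1}$ survives essentially, in particular every weight-one ($\log^\fm2$) slice --- is zero, the parity rigidity being used to pair such slices across the permutation sum with opposite signs; and (ii) the surviving contributions, read through the quotient factor, are again symmetrized sums of the type in the statement for smaller $\ell$, so that the inductive hypothesis disposes of the right tensor factor.

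\medskip
Step (ii), together with the choice of the right ambient family, is where I expect the real difficulty to lie. Excising a window and re-gluing can fuse two neighbouring blocks coming from $X$ into one block $\om_0^m\om_{\pm1}$ whose weight/sign type is \emph{not} odd-unbarred or even-barred, i.e.\ does not come from $X$; hence the family literally named in the statement is not stable under the $D_r$. One must enlarge it to a family $\calF$ of symmetrized Euler sums closed under this gluing operation yet still small enough that the cancellation in (i) persists, and prove unramifiedness for all of $\calF$ by a single induction. Pinning down such an $\calF$, and verifying that the cancellation in (i) --- transparent for the strictly alternating strings $(\ol2,3,\ol2,3,\dots)$ of Theorems~\ref{thm:2bar3Motivic}--\ref{thm:bar21Motivic}, but in general a nontrivial signed sum over ${\mathfrak S}_\ell\times{\mathfrak S}_\ell$ (equivalently over shuffles of the two blocks of entries) that has to collapse onto MZV slices --- actually holds, is precisely what has kept the statement at the level of a conjecture; it is a priori conceivable that for certain configurations symmetrizing only over ${\mathfrak S}_\ell\times{\mathfrak S}_\ell$ fails to produce an unramified value.

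\medskip
Finally, the hypothesis $1\notin\{u_j\}\cap\{v_j\}$ and the regularization index $a$ enter only to control the boundary of the iterated integral (adjacent unit entries, a trailing $(1,+1)$): these I would absorb into the shuffle regularization already built into $\zeta_a^\fm$, double-checking the low-weight slices $r\le3$ by hand against the numerical (LLL) data mentioned in the Remark to be sure no genuine level-two period survives. An alternative route --- first proving an explicit analytic identity expressing each symmetrized sum as a $\Q$-linear combination of MZVs and then lifting it motivically via Brown's theorem, as is done for the first group in this paper --- seems strictly harder, since it demands the concrete identity rather than just unramifiedness, and I would resort to it only in the lowest-depth cases as a consistency check.
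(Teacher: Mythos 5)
The statement you are addressing is stated in the paper as a \emph{conjecture}: the authors offer no proof, only the evidence of Theorems~\ref{thm:2bar3Motivic}, \ref{thm:bar21Motivic} and \ref{thm:bar21bar2Motivic}, which treat the constant strings $(\{\bar2,3\}_\ell)$, $(\{3,\bar2\}_\ell)$, $(\{\bar2,1\}_\ell)$, $(\{1,\bar2\}_\ell)$ where both permutation sums collapse to a single term. Your proposal correctly identifies the machinery one would use --- Glanois's descent criterion (Theorem~\ref{thm-Glanois}), the derivations $D_r$ of \eqref{equ:coaction}, vanishing of $D_1$ because $\bar1$ never occurs (Lemma~\ref{lem:D1}), and an induction on $\ell$ in which both tensor factors of each surviving cut must be recognized inductively --- and it correctly diagnoses why the general case is hard: single terms such as $\zeta(\bar2,3,\bar2,5)$ are expected to be ramified, so the symmetrization is essential, and the family named in the statement is not visibly stable under the quotient-sequence operation, so one must enlarge it. But this is a research program, not a proof. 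The two steps that carry all the content are left open: (i) that after summing over $\gs,\tau$ the left factors $I^\fl(\cdot)$ of every $D_r$-cut whose subsequence still involves $-1$ cancel, or assemble into elements of $\calL_r$ lying in the MZV part; and (ii) that the quotient factors reassemble into symmetrized sums belonging to some explicitly described stable family $\calF$ to which the inductive hypothesis applies. You acknowledge both gaps yourself, and even concede that the symmetrization might fail for some configurations. Since the paper proves nothing here either, there is no argument to compare yours against; what can be said is that in the proven special cases the cancellations are achieved term-by-term via path reversal (I5) and homothety (I6) on explicit periodic words, and no mechanism of that kind is exhibited in your sketch for a general multiset of $u_j,v_j$.

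Concretely, then: the gap is not a repairable local step but the absence of the central combinatorial argument. To turn your outline into a proof you would need, at minimum, (a) a precise description of the closure $\calF$ of the two symmetrized families under all $D_r$-cuts (including the cuts that begin in the initial block of $a$ zeros, which is how the parameter $a$ is handled in Section~\ref{sec:proofOfThm:2bar3Motivic}); (b) a signed pairing of cuts, valid for arbitrary $u_j,v_j\in X$ subject to $1\notin\{u_j\}\cap\{v_j\}$, showing that the non-MZV left factors cancel across the ${\mathfrak S}_\ell\times{\mathfrak S}_\ell$ (resp.\ ${\mathfrak S}_\ell\times{\mathfrak S}_{\ell-1}$) sum rather than within a single term; and (c) a verification that the hypothesis on the entries $1$ is exactly what makes (b) work, since it is the only place that hypothesis can enter. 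None of these is supplied, so the proposal should be regarded as a plausible strategy consistent with the authors' methods, not as a proof of the conjecture.
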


When $a=0$, after taking the period map this conjecture is reduced to a conjecture first proposed by M. Hirose and N. Sato \cite{SH2019}.

We will prove Theorem \ref{thm:2bar3Motivic} in section \ref{sec:proofOfThm:2bar3Motivic}. The idea can be applied similarly
to prove Theorem \ref{thm:bar21Motivic}. Instead of repeating this we would like to offer a precise version of it.

\begin{conj}\label{conj:bar21bar2}
For all $\ell\in\N$, we have
\begin{align*}
2^{3\ell+1}\zeta(\{\bar2,1\}_\ell,\bar2)=&\, -\sum_{\ga+\gb=\ell} (-1)^\ga \zeta(3_{\ga},2,3_{\gb}), \\
2^{3\ell-1}\zeta_\sha(\{1,\bar2\}_\ell,1)=&\, -3\sum_{\ga+\gb=\ell, 2\nmid \beta}\zeta(3_{\ga},1,3_{\gb}), \\
2^{3\ell}\zeta_\sha(\{\bar2,1\}_\ell)=&\, \zeta(3_\ell)-2\sum_{\ga+\gb=\ell-1} (-1)^{\ga} \zeta_1(3_{\ga},2,3_{\gb})
\end{align*}
where $\zeta_\sha(\bfs)$ is the shuffle regularized value of $\zeta(\bfs)$ (see \cite{Racinet2002} or \cite[Ch.~13]{ZhaoBook}).
\end{conj}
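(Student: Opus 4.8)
\medskip
\noindent\emph{Towards a proof.}\quad All three identities are statements among real numbers, and the plan is to prove them by extending the argument of \cite{Zhao2010a} behind \eqref{equ:12bar}. One passes to iterated-integral representations on $\mathbb{P}^1\setminus\{0,\pm1,\infty\}$ and exploits the involution $\varphi\colon t\mapsto\frac{1-t}{1+t}$, which interchanges the tangential base points at $0$ and $1$, rescaling their tangent vectors by $\varphi'(0)=-2$ and $\varphi'(1)=-\tfrac12$, and pulls back the three generating one-forms by
\begin{equation*}
\frac{\ud t}{t}\ \longmapsto\ -\frac{\ud t}{1-t}-\frac{\ud t}{1+t},\qquad
\frac{\ud t}{1-t}\ \longmapsto\ -\frac{\ud t}{t}+\frac{\ud t}{1+t},\qquad
\frac{\ud t}{1+t}\ \longmapsto\ -\frac{\ud t}{1+t}.
\end{equation*}
Applying $\varphi^{*}$ to the iterated integrals representing $\zeta(3_\ell)$, $\zeta(3_\alpha,2,3_\beta)$ and $\zeta(3_\alpha,1,3_\beta)$ (the right-hand sides), then reversing the path of integration --- which contributes the sign $(-1)^{\mathrm{wt}}$ and reverses the words --- and expanding the transformed forms, one gets a $\Z$-linear combination of level-two iterated integrals. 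The powers of $2$ in the statement should come out precisely as the contributions of the rescaled tangential base points at the two endpoints, while the shuffle regularizations of the non-admissible indices $\{\bar2,1\}_\ell$ and $(\{1,\bar2\}_\ell,1)$ are the standard ones for tangential base points; this is exactly why $\zeta_\sha$ and the regularized value $\zeta_1(3_\alpha,2,3_\beta)$ occur in the second and third identities but not in the first.

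To turn this into a proof I would handle all four families at once --- \eqref{equ:12bar} included --- by encoding each in a single generating series in a formal variable $x$, converting the substitution rules above into one functional (or differential) equation for that series, and then recognizing the solution. Here $\sum_{\ell\ge0}\zeta(3_\ell)x^{3\ell}=\prod_{n\ge1}(1+x^3/n^3)$, which factors through the $\Gamma$-functions $\Gamma(1+\omega^{j}x)$ for $j=0,1,2$ and $\omega=\exp(2\pi i/3)$; the right-hand sides of the conjectured identities are the corresponding ``one exceptional entry'' variants, that is, such $\Gamma$-products times sums $\sum_{j}\omega^{j}\psi(1+\omega^{j}x)$ of digamma values at cube roots of unity (cube roots of $-1$ entering through the alternating signs), so the whole conjecture ought to collapse to a single clean functional identity among these $\Gamma$-products. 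Once the three analytic identities are established, they combine with Theorem~\ref{thm:bar21Motivic} and Brown's result, as explained in the introduction, to determine the corresponding motivic identities. One can also try to run Brown's coaction argument directly on the motivic difference $D_\ell$ of the two sides of each identity, proving by induction on $\ell$ that its reduced coaction vanishes, whence $D_\ell\in\Q\,\zeta^\fm(\mathrm{wt})$; but pinning down the remaining rational coefficient still requires the generating-function computation, so the two approaches meet at the same point.

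The chief difficulty is the combinatorial bookkeeping, not a missing idea. Three points demand care: (i) the signs and the word reversal under $\varphi$; (ii) carrying the shuffle regularization consistently on both sides, where the regularization parameter interacts with the rescaled base points --- this is the most delicate step, and is what separates the convergent first identity from the regularized second and third; and (iii) the parity condition $2\nmid\beta$ in the second identity, which suggests that after the symmetrization implicit in the substitution the terms with $\beta$ even cancel in pairs, so one should split the generating series into its parts even and odd in the relevant variable and match the vanishing of the even part with a reflection symmetry of the $\Gamma$-product. With (i)--(iii) settled for the convergent cases, the divergent ones should follow by the usual limiting argument.
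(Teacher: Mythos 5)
First, a point of comparison: the paper does not prove this statement at all --- it is stated as Conjecture~\ref{conj:bar21bar2}, the first two identities being only numerical discoveries of Hirose--Sato \cite{SH2019}, and it enters the paper only as a hypothesis in Theorem~\ref{thm:bar21bar2Motivic}. So your attempt cannot be measured against a proof in the paper; it has to stand on its own, and as written it does not: it is a strategy outline in which every decisive step is deferred. Your pullback formulas for $t\mapsto\frac{1-t}{1+t}$ are correct, but after applying $\varphi^{*}$ to the integral words for $\zeta(3_{\ga},2,3_{\gb})$ or $\zeta(3_{\ga},1,3_{\gb})$ you obtain, upon expanding each transformed one-form, a large $\Z$-linear combination of level-two words; the entire content of the conjecture is that these combinations telescope, after summing over $\ga+\gb=\ell$ with the prescribed signs or parity condition, into the single words for $\zeta(\{\bar2,1\}_\ell,\bar2)$, $\zeta_\sha(\{\bar2,1\}_\ell)$, $\zeta_\sha(\{1,\bar2\}_\ell,1)$. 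You assert this collapse but never exhibit it, and it is not a routine bookkeeping matter. Note also that the proof of \eqref{equ:12bar} in \cite{Zhao2010a}, which the paper cites, uses double shuffle and distribution relations rather than this change of variable, so ``extending the argument of \cite{Zhao2010a}'' does not describe what you are doing.

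Two further gaps are substantive rather than cosmetic. First, the generating-function reduction is unsubstantiated: $\sum_{\ell\ge0}\zeta(3_\ell)x^{3\ell}=\prod_{n\ge1}(1+x^3/n^3)$ is indeed a $\Gamma$-product, but $\sum_{\ga+\gb=\ell}(-1)^{\ga}\zeta(3_{\ga},2,3_{\gb})$ and $\sum_{\ga+\gb=\ell,\,2\nmid\gb}\zeta(3_{\ga},1,3_{\gb})$ are interlaced double sums (the summation variables of the two blocks of $3$'s are nested, not independent), so they are not obtained from the $\Gamma$-product by inserting $\sum_j\omega^j\psi(1+\omega^jx)$; one needs a genuine two-variable generating function in the spirit of Zagier's evaluation of $\zeta(2_a,3,2_b)$, and producing its closed form and the specialization that creates the alternating sign is exactly the work you have not done. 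Second, the regularization bookkeeping is flagged but never resolved: the statement mixes convergent values, $\zeta_\sha$, and the quantity $\zeta_1(3_{\ga},2,3_{\gb})$, and your argument would have to show that the regularization parameter $T$ cancels identically on both sides after the substitution and path reversal, with explicit comparison of shuffle and stuffle regularizations where you invoke them. Until the telescoping identity, the two-variable generating function, and the regularization comparison are actually carried out, the conjecture remains open under your approach.
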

The first two identities were numerically discovered by  M. Hirose and N. Sato \cite{SH2019}.

\begin{thm} \label{thm:bar21bar2Motivic}
Assume Conjecture~\ref{conj:bar21bar2} holds. Then for all $\ell\in\N$, we have
\begin{align}\label{equ:1bar2Motivic}
2^{3\ell}  \zeta^\fm(\{1,\bar2\}_\ell)=&\, \zeta^\fm(3_\ell),\\
2^{3\ell+1}\zeta^\fm(\{\bar2,1\}_\ell,\bar2)=&\,
    -\sum_{\ga+\gb=\ell} (-1)^\ga\zeta^\fm(3_{\ga},2,3_{\gb}),\label{equ:bar21bar2Motivic}\\
2^{3\ell}\zeta^\fm(\{\bar2,1\}_\ell)=&\,
    \zeta^\fm(3_\ell)-2\sum_{\ga+\gb=\ell-1} (-1)^{\ga} \zeta_1^\fm(3_{\ga},2,3_{\gb}), \label{equ:bar21Motivic}\\
2^{3\ell-1}\zeta^\fm(\{1,\bar2\}_\ell,1)=&\,-3\sum_{\ga+\gb=\ell, 2\nmid \beta}\zeta^\fm(3_{\ga},1,3_{\gb}).\label{equ:12bar1Motivic}
\end{align}
\end{thm}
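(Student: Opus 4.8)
The plan is to deduce the motivic identities from their analytic counterparts in Conjecture~\ref{conj:bar21bar2} by exploiting the rigidity theorem of Brown: a motivic MZV (or motivic Euler sum) is, up to a rational multiple of a motivic Riemann zeta value $\zeta^\fm(w)$ of the appropriate weight $w$, determined by its image under the coaction, i.e., by its ``infinitesimal'' part in the Lie coalgebra. Concretely, for each of the four claimed identities I would first check that both sides have the same weight and depth-parity behaviour so that the difference of the two sides lies in the one-dimensional space spanned by $\zeta^\fm(3\ell)$, $\zeta^\fm(3\ell+3)$, etc.\ (for identity \eqref{equ:1bar2Motivic} the weight is $3\ell$, for \eqref{equ:bar21bar2Motivic} it is $3\ell+2$, for \eqref{equ:bar21Motivic} it is $3\ell$, and for \eqref{equ:12bar1Motivic} it is $3\ell+1$). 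Then applying the period map (which kills nothing on the relevant graded piece except possibly detecting the $\zeta^\fm$ ambiguity) and invoking the analytic identities of Conjecture~\ref{conj:bar21bar2} pins down that rational multiple to be zero, giving the motivic equality.

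The key steps, in order, are as follows. First, I would recall from the previous section the precise definition of $\zeta^\fm$, $\zeta_a^\fm$, the infinitesimal coaction $D_r$ for $r\ge 1$ odd (or the full $D_{<N}$), and Glanois's unramifiedness criterion together with Brown's statement that $\ker(\oplus_r D_r)$ in weight $N$ is spanned by $\zeta^\fm(N)$. Second, for each identity I would compute $D_r$ applied to the left-hand side and to the right-hand side; Theorem~\ref{thm:2bar3Motivic} and Theorem~\ref{thm:bar21Motivic} (whose proofs are given in section~\ref{sec:proofOfThm:2bar3Motivic}) already furnish, as a byproduct of the unramifiedness computation, explicit formulas for these coactions in terms of motivic MZVs, so the comparison reduces to a combinatorial identity among tensor components — essentially the ``linearized'' or ``leading-term'' version of the conjectured identities, which one verifies by an induction on $\ell$ matching the recursive structure already used to prove unramifiedness. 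Third, having shown $D_r(\text{LHS}-\text{RHS})=0$ for all relevant $r$, I conclude $\text{LHS}-\text{RHS}=c\,\zeta^\fm(N)$ for some $c\in\Q$. Fourth, I apply the period map $\mathrm{per}\colon \calH \to \R$; since $\mathrm{per}(\zeta^\fm(N))=\zeta(N)\ne 0$, the value of $c$ is determined by the analytic identity, and Conjecture~\ref{conj:bar21bar2} (assumed) forces $c=0$. This yields \eqref{equ:1bar2Motivic}--\eqref{equ:12bar1Motivic}. For \eqref{equ:bar21Motivic}, which involves the regularized $\zeta_1^\fm$, I would additionally use the motivic shuffle-regularization compatibility (the fact that $\zeta_a^\fm$ is defined so that its coaction and period behave as expected) to run the same argument; note that \eqref{equ:1bar2Motivic} is exactly the $a=0$ specialization of the motivic lift of \eqref{equ:12bar} with the constant $8=2^3$, so it also serves as the base case $\ell$ of a possible induction and as a consistency check.

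The main obstacle I anticipate is step two: establishing that the coactions of the two sides agree. There are two sub-difficulties. First, one must have sufficiently explicit control of $D_r\zeta^\fm(\{1,\bar2\}_\ell)$ and the companions — the point is that these contain both a ``motivic MZV part'' and potentially ``genuinely level-two'' tensor factors, and the unramifiedness theorems guarantee the latter cancel, but to match the right-hand side one needs the surviving part in closed form, which requires carefully tracking signs, the $2$-power normalizations, and the interaction with the derivation $D_1$ responsible for the regularized pieces in \eqref{equ:bar21Motivic} and \eqref{equ:12bar1Motivic}. Second, the combinatorial identity on tensors that results — roughly a sum over $\ga+\gb=\ell$ of shuffles of strings $3_\ga$ and $3_\gb$ around an inserted $\bar2$ or $1$ — must be reconciled with the left side's single term, and I expect this to require the same kind of delicate induction on $\ell$ (peeling off the innermost or outermost block) that drives the proof in section~\ref{sec:proofOfThm:2bar3Motivic}; the parity restriction $2\nmid\beta$ in \eqref{equ:12bar1Motivic} and the alternating sign $(-1)^\ga$ in \eqref{equ:bar21bar2Motivic}/\eqref{equ:bar21Motivic} are exactly the fingerprints of how $D_r$ distributes over these blocks, so getting those bookkeeping details right is where the real work lies. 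Once the coaction identities are in hand, the passage to the motivic statement via Brown's rigidity plus the period map is formal.
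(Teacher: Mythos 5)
Your plan is essentially the paper's own proof: for each identity one inducts on $\ell$, computes $D_r$ of both sides explicitly by the cut/picture analysis (using the inductive hypothesis to identify the surviving tensor factors), invokes Glanois's criterion together with Brown's kernel result (Theorem~\ref{thm-Glanois}) to conclude the difference is $c\,\zeta^\fm(N)$, and then applies the period map with the analytic identities of Conjecture~\ref{conj:bar21bar2} (or the proven \eqref{equ:12bar} for \eqref{equ:1bar2Motivic}) to force $c=0$. The only minor slip is attributing the regularized terms $\zeta_1^\fm$ to $D_1$ (which vanishes throughout by Lemma~\ref{lem:D1}); in the paper they arise as left factors of cuts beginning at the initial $0$, but this does not affect the soundness of your outline.
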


The proof of Theorem \ref{thm:bar21bar2Motivic} relies on the descent criterion first shown by Glanois \cite{Glanois2015,Glanois2016},
which can tell us when an Euler sum is unramified (i.e., lying in the MZV space). The computation also implies Theorem~\ref{thm:bar21Motivic}
without assuming Conjecture~\ref{conj:bar21bar2} since we only need to show the coaction \eqref{equ:coaction} is stable. 
But in order to prove the precise expressions of these motivic Euler sums
in terms of motivic MZVs as listed in \eqref{equ:bar21bar2Motivic}--\eqref{equ:12bar1Motivic} 
we need to use their analytic version given by the conjecture to 
show the expressions still hold after the coaction is applied.

\section{Motivic set-up}
We adopt the motivic set-up defined by \cite{Glanois2016}. Let $\Gamma_{N}$ be the group of $N$-th unity for any $N\in\N$. Let $w\in\N$ and suppose $a_j=0$ or $a_j\in\Gamma_{N}$ for all $0\le j\le w+1$. The most important property of the motivic integrals is that there is a period map $\dch$  such that

\begin{equation}\label{equ:periodMap}
\dch \big(I^\fm(a_0;a_1,\dots,a_w;a_{w+1})\big)=(-1)^{\dep(a_1,\dots,a_w)}
\int_{a_0}^{a_{w+1}} \frac{dt}{t-a_1} \cdots  \frac{dt}{t-a_w}
\end{equation}
as an iterated integral (we always integrate from left to right in this paper)
whenever it converges. Here $\dep(a_1,\dots,a_w):=\sharp\{1\le j\le w: a_j\ne 0\}$ is called
the \emph{depth} and $w$ the \emph{weight} of the integral (and its corresponding MZV).
When the integral diverges then there is a way to regularize it so
that it becomes a polynomial in $\CMZV[T]$, where $\CMZV$ is the $\Q$-span of all CMZVs of level $N$.
By setting $T=0$ we can define the so-called shuffle regularized CMZVs (see \cite[Ch.~13]{ZhaoBook} for more details).

We now list all the important properties of the motivic integrals as follows (cf. \cite[\S2.4]{Brown2012}, \cite[p.~9]{Glanois2016}, and \cite[\S2, (I1)-(I6)]{Murakami2021}):
\begin{itemize}
	\item[(I1)] Empty word: $I^\fm(a_{0}; \emptyset; a_{1})=1$.

	\item[(I2)] Trivial path: $\forall n\ge1$, $I^\fm(a_{0}; a_{1}, \dots, a_{n}; a_{n+1})=0$ if $a_{0}=a_{n+1}$.

	\item[(I3)] Shuffle product: for any $s_{1}, \dots, s_n\in\mathbb N$ and
$\eps_{1}, \dots,\eps_n\in\Gamma_N$, define
\begin{equation*}
\zeta_k^\fm\left(s_{1}, \dots, s_n \atop \eps_{1}, \dots,\eps_n \right)
:=I^\fm\big(0; 0^k,\eta_1,
0^{s_1-1},\eta_2,
0^{s_2-1}, \dots, \eta_n,
0^{s_n-1}; 1 \big)
\end{equation*}
where $\eta_j=1/\eps_j\cdots \eps_d$ for all $j=1,\dots,d$. Then
	\begin{equation*}
\zeta_{k}^\fm \left( {s_{1}, \cdots , s_{p} \atop \eps_{1}, \cdots ,\eps_{p} }\right)=
(-1)^{k}\sum_{\substack{i_{1}+ \cdots + i_{p}=k\\ i_{1},\dots,i_{p}\ge0}} \left(\prod_{j=1}^p\binom {s_j+i_j-1} {i_j}\right) \zeta^\fm \left( {s_{1}+i_{1}, \cdots , s_{p}+i_{p} \atop \eps_{1}\ \ , \cdots ,\ \ \eps_{p} }\right).
 \end{equation*}

	\item[(I4)] Regularization: If $a_{1}=\cdots=a_n\in \{0,1\}$, then
	$I^\fm(0; a_{1}, \cdots, a_{n}; 1)=0.$

	\item[(I5)] Path reversal: $I^\fm(a_{0}; a_{1}, \cdots, a_{n}; a_{n+1})= (-1)^n I^\fm(a_{n+1}; a_{n}, \cdots, a_{1}; a_{0}).$

	\item[(I6)] Homothety: $\forall \alpha \in \Gamma_{N}, I^\fm(0; \alpha a_{1}, \cdots, \alpha a_{n}; \alpha a_{n+1})  = I^\fm(0; a_{1}, \cdots, a_{n}; a_{n+1})$.

	\item[(I7)] Change of variable $t\to 1-t: \forall a_{1}, \cdots, a_{n}\in\{0, 1\},$ $$I^\fm(0; a_{1}, \cdots, a_{n}; 1)=  I^\fm(0;1-a_n, \cdots, 1-a_1; 1).$$

	\item[(I8)] Path composition: $\forall a,b, x\in \Gamma_{N} \cup \left\{0\right\}$,
	$$  I^\fm(a; a_{1}, \cdots, a_{n}; b)=\sum_{i=0}^{n} I^\fm(a; a_{1}, \cdots, a_{i}; x) I^\fm(x; a_{i+1}, \cdots, a_{n}; b) .$$
\end{itemize}

Let $\calH^N$ be the $\Q$-vector space spanned by the motivic CMZVs of the form
\begin{equation*}
\zeta^\fm_a\tworow{n_1,\dots,n_d}{\eps_1,\dots,\eps_d}:= I^\fm(0;0_a,\eta_1,0_{n_1-1},\dots,\eta_d,0_{n_d-1};1),
\quad a\in\N_0,
\end{equation*}
where $n_j\in\N, \eps_j\in\Gamma_{N}$, and $\eta_j$'s are defined in (I3) for all $j$.
Note that $a+n_1+\dots+n_d$ is its weight which is denoted by $|\bfn|$ when $a=0$.
If $\eps_j=1$ (resp. $\eps_j=\pm1$) for all $j$ then we get the motivic MZV (resp. Euler sum) $\zeta^\fm_a(n_1,\dots,n_d)$.
For $N=1,2$, set $\calA^N=\calH^N/\zeta^\fm(2)\calH^N$. Denote by
$\calH^N_w$ and $\calA_w^N$ the weight $w$ part for all $w\ge 0$. Let $\calL^N=\calA^N_{>0}/\calA^N_{>0}\cdot \calA^N_{>0}$.
For any weight $w$ and odd $r$ such that $r<w$ one can define a derivation as part of a coaction
\begin{equation}\label{equ:coaction}
D_r: \calH_w^N   \to \calL_r^N \ot \calH_{w-r}^N
\end{equation}
by sending $I^\fm(a_0;a_1,\dots,a_w;a_{w+1})$ to
$$
\sum_{p=0}^{w-r} I^\fl(a_p; a_{p+1},\dots,a_{p+r};a_{p+r+1})\ot I^\fm(a_0;a_1,\dots,a_p,a_{p+r+1},\dots,a_w;a_{w+1}).
$$
The sequence in the left motivic integral is called a \emph{subsequence} of $(a_0;a_1,\dots,a_w;a_{w+1})$ while
that in the right factor is called a \emph{quotient sequence}. Each such a choice is called a \emph{cut}.

Now we set $N=2$ and recall the following theorem which combines
Glanois's result \cite[Cor.\ 2.4]{Glanois2016} with Brown's \cite[Thm.\ 3.3]{Brown2012}. Set $\calH=\calH^1$ and $\calL=\calL^1$.

\begin{thm}\label{thm-Glanois}
Let $a\in\N_0$ and $\bfs$ be a composition of positive integers such that $a+|\bfs|=k$.
Then the weight $k$ motivic Euler sum $\zeta_a^\fm(\bfs)\in\calH_k$ if and only $D_1 \zeta_a^\fm(\bfs)=0$ and
$D_r \zeta_a^\fm(\bfs)\in \calL_r\ot \calH_{k-r}$ for all odd $r<k$.
Moreover, if $D_r \zeta_a^\fm(\bfs)=0$ for all odd $r<k$ then
$\zeta_a^\fm(\bfs)=c \, \zeta^\fm(k)$ for some rational number $c$.
\end{thm}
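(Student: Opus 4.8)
\textbf{Proof proposal for Theorem~\ref{thm-Glanois}.}

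The plan is to assemble the statement from two known results in the literature and add the small observation needed to glue them. First I would recall the structure theory of the motivic fundamental group of $\mathbb{P}^1\setminus\{0,1,\infty\}$ (level $N=1$) as developed by Brown: the Hopf algebra $\calH$ is, as a graded vector space, identified with the universal enveloping algebra dual of the fundamental Lie algebra, and the reduced coaction $\Delta':\calH\to\calL\ot\calH$ together with its graded pieces $D_r:\calH_k\to\calL_r\ot\calH_{k-r}$ for odd $r<k$ controls membership in $\calH$ as a subspace of the larger Hopf algebra $\calH^2$ of motivic Euler sums. The key exact-sequence input is Brown's infinitesimal criterion: $\bigoplus_{r} D_r$ is, in each weight, injective modulo $\zeta^\fm(k)$, i.e. its kernel in weight $k$ is exactly the line $\Q\,\zeta^\fm(k)$. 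I would state this precisely, citing \cite[Thm.\ 3.3]{Brown2012}, since it gives the ``Moreover'' clause immediately: if $D_r\zeta_a^\fm(\bfs)=0$ for all odd $r<k$ then $\zeta_a^\fm(\bfs)$ lies in that kernel, hence equals $c\,\zeta^\fm(k)$ for some $c\in\Q$.

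Next I would recall Glanois's descent/unramifiedness criterion \cite[Cor.\ 2.4]{Glanois2016}. The point is that $\zeta_a^\fm(\bfs)\in\calH^2$ always (it is a motivic Euler sum by definition), and one wants a criterion for it to descend to the sub-Hopf-algebra $\calH=\calH^1$ of motivic MZVs. Glanois shows that this descent is detected by the coaction: an element $\xi\in\calH^2_k$ lies in $\calH_k$ if and only if $D_1\xi=0$ (this handles the level-$2$ part, the obstruction living in $\calL_1^2/\calL_1$, which for Euler sums is one-dimensional) and $D_r\xi\in\calL_r\ot\calH_{k-r}$ for every odd $r<k$ — that is, each infinitesimal coaction component already takes values in the MZV side. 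I would spell out why the $D_1$ condition is the genuinely new constraint at level two: $\calL_1^1$ is trivial (there is no motivic $\zeta(1)$), so for honest MZVs $D_1$ is automatically zero, whereas for Euler sums $D_1$ can produce a nonzero multiple of the class of $\zeta^\fm(\bar1)=-\log^\fm 2$, and vanishing of that class is precisely unramifiedness in weight-one depth-one. Combining Glanois's equivalence with the fact that, once $\xi\in\calH_k$, one may test further membership/identities inside $\calH$ rather than $\calH^2$, yields the stated ``if and only if''.

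Finally, to make the argument self-contained I would verify the compatibility of the two coactions: the $D_r$ defined on $\calH^2$ by the explicit cut formula restricts, on elements known to lie in $\calH$, to the $D_r$ on $\calH$ appearing in \eqref{equ:coaction}; this is a formal check from the cut formula because the cut formula is natural in the alphabet $\{0,a_1,\dots,a_w\}$ and the inclusion $\calH\hookrightarrow\calH^2$ is a morphism of coalgebras. With that in place, the proof is just: ($\Leftarrow$) if $D_1\zeta_a^\fm(\bfs)=0$ and all $D_r\zeta_a^\fm(\bfs)\in\calL_r\ot\calH_{k-r}$, then Glanois's criterion puts $\zeta_a^\fm(\bfs)$ in $\calH_k$; ($\Rightarrow$) conversely if $\zeta_a^\fm(\bfs)\in\calH_k$ then $D_1$ of it vanishes since $\calL_1=0$, and each $D_r$ maps $\calH_k$ into $\calL_r\ot\calH_{k-r}$ by \eqref{equ:coaction}; the ``Moreover'' clause is Brown's kernel statement. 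The main obstacle is not any single computation but rather citing the two black boxes at the correct level of generality — in particular making sure Glanois's corollary is quoted for arbitrary shuffle-regularized depth ($a\in\N_0$, not just $a=0$) and that Brown's Theorem~3.3 is applied in the weight-graded form that pins the kernel of $\bigoplus_r D_r$ down to $\Q\,\zeta^\fm(k)$ rather than merely to the span of products; once these are correctly invoked, the gluing is routine.
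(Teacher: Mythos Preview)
Your proposal is correct and matches the paper's own treatment: the paper does not give an independent proof but simply \emph{recalls} Theorem~\ref{thm-Glanois} as the combination of Glanois's descent criterion \cite[Cor.~2.4]{Glanois2016} with Brown's kernel statement \cite[Thm.~3.3]{Brown2012}, exactly the two black boxes you invoke. Your write-up is more detailed than the paper's one-line citation, but the approach is identical.
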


Set $\eta_j=\prod_{i=j}^d\eps_i$ for all $j=1,\dots,d$ as above (note that $\eta_j=1/\eta_j$) and define
\begin{equation*}
\rho\tworow{n_1,\dots,n_d}{\eps_1,\dots,\eps_d}:=(0;\eta_1,0_{n_1-1},\dots,\eta_d,0_{n_d-1};1).
\end{equation*}
We often use the bar notation instead of the two-row one for Euler sums.

The following simple fact was already noticed by Glanois \cite{Glanois2015} without proof.
We provide one here for completeness.
\begin{lem} \label{lem:D1}
If $\bar1$ does not appear in $\bfs$ then $D_1\zeta_a^\fm(\bfs)=0$ for all $a\in\N_0$.
\end{lem}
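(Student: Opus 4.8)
The plan is to compute $D_1\zeta_a^\fm(\bfs)$ directly from the definition of $D_1$ and show every term vanishes. Recall that $\zeta_a^\fm(\bfs) = I^\fm(0; 0_a, \eta_1, 0_{n_1-1},\dots,\eta_d, 0_{n_d-1}; 1)$ where, by the level $N=2$ convention, each $\eta_j = \prod_{i=j}^d \eps_i \in \{1,-1\}$, and the hypothesis that $\bar1$ does not appear in $\bfs$ means precisely that whenever $n_j = 1$ we have $\eps_j = 1$, hence consecutive entries $\eta_j, \eta_{j+1}$ are equal in that case. Writing the full sequence as $(a_0; a_1,\dots,a_w; a_{w+1}) = (0; 0_a, \eta_1, 0_{n_1-1},\dots,\eta_d, 0_{n_d-1}; 1)$, the formula for $D_1$ gives
\begin{equation*}
D_1\zeta_a^\fm(\bfs) = \sum_{p=0}^{w-1} I^\fl(a_p; a_{p+1}; a_{p+2}) \ot I^\fm(0; a_1,\dots,a_p, a_{p+2},\dots,a_w; 1).
\end{equation*}
So I must show that for every $p$, the degree-one left factor $I^\fl(a_p; a_{p+1}; a_{p+2})$ is zero in $\calL_1^1$.

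First I would recall the structure of $\calL_1 = \calA_1^1$: it is spanned by the classes of $I^\fm(x; y; z)$ with $x,y,z\in\{0,1\}$ (after using homothety (I6) to rescale, since all entries here lie in $\{0,1,-1\}$ and we are working modulo $\zeta^\fm(2)\calH$ in level one). By (I2) the class vanishes when $x = z$; by path reversal (I5) we have $I^\fl(x;y;z) = -I^\fl(z;y;x)$; and $I^\fl(0;1;1)$, $I^\fl(1;0;0)$ correspond (up to sign) to $\zeta^\fl(1)$, which is $0$ by regularization (I4); while $I^\fl(0;0;1)$ and $I^\fl(1;1;0)$ are likewise $0$ by (I4) (a path with constant interior). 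Thus the only way $I^\fl(a_p;a_{p+1};a_{p+2})$ can be nonzero is if, after the homothety normalization, the three entries are not all equal, the two endpoints differ, and the middle one is genuinely ``interesting'' — but a one-letter iterated integral over $\{0,1\}$ is always a rational multiple of $\zeta^\fl(1)=0$ or zero by triviality. Hence the only potential obstruction is a cut where $a_{p+1}$ equals $-1$ (a genuine level-two letter) and the endpoints $a_p, a_{p+2}$ force a nonzero value; after homothety by $-1$ this becomes $I^\fl(-a_p; 1; -a_{p+2})$ with $-a_p, -a_{p+2}\in\{0,1\}$, again a multiple of $\zeta^\fl(1)$ or zero.

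So the heart of the argument is: the single letter $a_{p+1}$ being cut out must be one of the $\eta_j$ (a cut that removes an interior $0$ clearly contributes $I^\fl(0;0;0)=0$ or $I^\fl(a_p;0;a_{p+2})$ with $a_p,a_{p+2}\in\{0,1,-1\}$, which is $0$ by (I2) when $a_p=a_{p+2}$ and otherwise a multiple of $\zeta^\fl(1)$). When $a_{p+1}=\eta_j$, its two neighbors in the sequence are: the entry before $\eta_j$, which is either $0$ (if $n_{j-1}\ge 2$, or $j=1$ and $a\ge 1$) or $\eta_{j-1}$ (if $n_{j-1}=1$) or $0$ in the edge case; and the entry after, which is $0$ if $n_j\ge 2$ and $\eta_{j+1}$ (or $1$, if $j=d$) if $n_j=1$. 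The only dangerous configuration is when $\eta_{j-1}$ and $\eta_{j+1}$ (or the boundary $0$ or $1$) give distinct endpoints around $\eta_j$; I claim the $\bar1$-free hypothesis rules this out. Indeed if $n_j=1$ then $\eps_j=1$ so $\eta_j=\eta_{j+1}$, making the right neighbor equal to $a_{p+1}$ and killing the term by (I2). And if the left neighbor is $\eta_{j-1}$ then $n_{j-1}=1$, so $\eps_{j-1}=1$, so $\eta_{j-1}=\eta_j$, again killing it. The remaining cases have a $0$ on at least one side and the other side either $0$, or $\pm1$ but then the value is $\pm\zeta^\fl(1)=0$. The main obstacle — really the only place care is needed — is bookkeeping the boundary cuts $p=0$ (involving $a_0=0$) and $p=w-1$ (involving $a_{w+1}=1$) and the case $a=0$, $n_1=1$ where the sequence begins $(0;1,\dots)$; but here $I^\fl(0;1;\cdot)$ is still a multiple of $\zeta^\fl(1)=0$ unless the third entry is $0$, in which case (I2) does not apply yet it is $-\zeta^\fl(1)=0$ again. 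Assembling these observations shows every summand in $D_1\zeta_a^\fm(\bfs)$ vanishes, hence $D_1\zeta_a^\fm(\bfs)=0$.
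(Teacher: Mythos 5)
Your overall skeleton is the same as the paper's one\-paragraph argument: the absence of $\bar1$ forces any two \emph{adjacent} nonzero letters of the word to carry the same sign (since $n_j=1$ implies $\eps_j=1$, so $\eta_j=\eta_{j+1}$), and then one checks that every weight\-one cut factor dies. However, the structural claims you use to set this up are false. The left factor of $D_1$ lives in $\calL_1^2$ (level two), not in $\calA_1^1$, and $\calL_1^2$ is \emph{not} zero, nor is it spanned by integrals with letters in $\{0,1\}$: it is spanned by $\zeta^\fl(\bar1)=I^\fl(0;-1;1)\neq 0$, and homothety (I6) only rescales the whole word by $-1$, so it cannot turn a triple containing both $1$ and $-1$ into one over $\{0,1\}$. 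If your description of $\calL_1$ were correct, the lemma would hold with no hypothesis on $\bfs$ at all, which is false: for instance $D_1\zeta^\fm(\bar1,2)$ is a nonzero rational multiple of $\zeta^\fl(\bar1)\ot\zeta^\fm(2)$. For the same reason, your claim that ``the only potential obstruction is a cut whose middle letter is $-1$'' is wrong: $I^\fl(0;1;-1)=\zeta^\fl(\bar1)\neq 0$ has middle letter $1$. The correct target is to show that no cut produces $\zeta^\fl(\bar1)$, i.e.\ no cut has $1$ and $-1$ in the dangerous positions, and that is exactly what the same\-sign property supplies; your final neighbor analysis does carry this out for cuts whose middle letter is some $\eta_j$ (the appeal to (I2) when the middle letter merely equals the \emph{right neighbor} is a misattribution, but those terms are indeed $I^\fl(0;\eta;\eta)$\-- or $I^\fl(\eta;\eta;0)$\--type and vanish as you noted earlier).

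The genuine hole is the cut that removes a single $0$ flanked by nonzero letters of \emph{opposite} sign, $I^\fl(\eta;0;-\eta)$. This configuration really occurs under the hypothesis of the lemma, because components $\bar2$ are allowed: e.g.\ for $\bfs=(\bar2,3)$ the word is $(0;-1,0,1,0,0;1)$ and the $p=1$ cut is $I^\fl(-1;0;1)$; this is in fact the only term in $D_1\zeta^\fm(\bar2,3)$ not killed outright by (I2) or (I4). Your proposal disposes of it by calling it ``a multiple of $\zeta^\fl(1)$'', which is not a justification: its period is $\log(-1)$, not a regularized divergent $\zeta(1)$, so nothing you have proved forces it to vanish. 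It does vanish, but for a different reason, e.g.\ by path composition (I8) through $0$, $I^\fl(-1;0;1)=I^\fl(-1;0;0)+I^\fl(0;0;1)$, and both pieces are $0$ by (I5), (I6) and (I4); equivalently, reversal plus homothety give $I^\fl(-1;0;1)=-I^\fl(-1;0;1)$. Once you replace the incorrect description of $\calL_1$ by the statement ``no cut yields $\zeta^\fl(\bar1)$'' and supply a valid argument for $I^\fl(\pm1;0;\mp1)$, your case analysis coincides with the paper's proof.
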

\begin{proof}
If $|s_j|>1$ for all components of $\bfs$ then no two consecutive nonzero terms can appear
in $\rho(\bfs)$ so that $D_1=0$ clearly. If $s_j=1$ then the corresponding sign $\eps_j=1$ and therefore $\eta_j=\eta_{j+1}$.
Namely, whenever two consecutive nonzero terms  appear they must have the same sign.
Thus if there are more than one nonzero numbers in the left factor $I^\fl(a;b;c)$
of $D_1$ then it must look like $I^\fl(0;\eta;\eta)$, or $I^\fl(\eta;\eta;0)$,or $I^\fl(\eta;0;\eta)$ or $I^\fl(\eta;\eta;\eta)$
all of which must be 0. All the other cases are clearly 0, too.
\end{proof}

Recall that the Bernoulli numbers $B_n$ are defined by the generating function
$$
\frac{t}{e^t-1}=\sum_{n=0}^\infty B_n\frac{t^n}{n!}.
$$

\begin{prop} \label{prop-singleES}
For any positive integers $m\ge 2$ and $d\ge 1$ we have
\begin{equation}\label{equ:zbarmd}
    \zeta^\fm(\{\bar m\}_d)\in \calH_{md}.
\end{equation}
Moreover, if $m=2s$ is even we have
\begin{equation}\label{equ:2sd}
    \zeta^\fm(\{\ol{2s}\}_d)= c\, \zeta^\fm(2sd)
\end{equation}
where
$$
c= \frac{-2 (2sd)!}{16^{sd} B_{2sd} } \sum_{\substack{\gS_{j=1}^s n_j=sd\\ n_j \in\N_0\ \forall j}}
\left(\prod_{j=1}^s \frac{\big(1+e^{\tfrac{\pi i}{2s}}\big)^{2n_j+1}+\big(1-e^{\tfrac{\pi i}{2s}}\big)^{2n_j+1} }{2(2n_j+1)!}
e^{\tfrac{\pi i}{s}\underset{\scriptscriptstyle j=1}{\overset{\scriptscriptstyle m}{\gS}} (2j-1)n_j}
\right)\in\Q.
$$
\end{prop}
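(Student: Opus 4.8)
The plan is to reduce everything to the single-variable depth-one case and then invoke Theorem~\ref{thm-Glanois}. First I would show \eqref{equ:zbarmd}: by Lemma~\ref{lem:D1} we have $D_1\zeta^\fm(\{\bar m\}_d)=0$ since $\bar 1$ does not occur (here $m\ge 2$), so by Theorem~\ref{thm-Glanois} it suffices to check that $D_r\zeta^\fm(\{\bar m\}_d)\in\calL_r\ot\calH_{md-r}$ for every odd $r<md$. The word associated to $\zeta^\fm(\{\bar m\}_d)$ is $\rho(\{\bar m\}_d)=(0;\eta_1,0_{m-1},\dots,\eta_d,0_{m-1};1)$ where each $\eta_j=\prod_{i=j}^d(-1)=(-1)^{d-j+1}\in\{\pm1\}$. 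When we apply $D_r$ and pick a subsequence $(a_p;a_{p+1},\dots,a_{p+r};a_{p+r+1})$, the left motivic period $I^\fl$ is, up to the homothety (I6) and path-reversal (I5) relations, one of finitely many depth-one Euler-sum type periods of the form $I^\fl(0;\pm1,0_{*},\dots;\pm1)$ or $I^\fl(0;0_*;\pm1)$-shaped boundary pieces; I would argue case-by-case that each such $I^\fl$ lies in $\calL_r$ — i.e.\ is (mod products and $\zeta^\fl(2)$) a rational multiple of $\zeta^\fl(r)$ — because a depth-one alternating value $\zeta^\fl(\bar r)$ or $\zeta^\fl(r)$ with $r$ odd is, in weight $r$, automatically in the one-dimensional space $\calL_r=\Q\,\zeta^\fl(r)$. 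The quotient sequence is again of the same shape $(0;\eta\text{'s and }0\text{'s};1)$, hence its $I^\fm$ lies in $\calH_{md-r}$ by definition. This gives \eqref{equ:zbarmd}.

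Next, for $m=2s$ even, I would strengthen this to $D_r\zeta^\fm(\{\ol{2s}\}_d)=0$ for all odd $r<2sd$; by the second assertion of Theorem~\ref{thm-Glanois} this forces $\zeta^\fm(\{\ol{2s}\}_d)=c\,\zeta^\fm(2sd)$ for some $c\in\Q$, which is \eqref{equ:2sd}. The vanishing of $D_r$ should follow from the fact that the subsequences come in cancelling pairs: the word $\rho(\{\ol{2s}\}_d)$ has a block structure with $2s$-periodicity, and for even block-length the ``$\bar2$-type'' cancellation phenomenon (the mechanism behind identities like \eqref{equ:12bar}) makes the antisymmetrization inherent in $D_r$ annihilate the sum. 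Concretely I would match each cut of length $r$ against the cut shifted appropriately so that the $I^\fl$ contributions sum to zero while the $I^\fm$ quotient factors agree, using (I5)–(I7).

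Finally I would pin down the constant $c$. Since $c$ is a rational number independent of the motivic lift, I can compute it analytically: applying the period map $\dch$ to \eqref{equ:2sd} gives $\zeta(\{\ol{2s}\}_d)=c\,\zeta(2sd)$, so $c=\zeta(\{\ol{2s}\}_d)/\zeta(2sd)$. I would evaluate $\zeta(\{\ol{2s}\}_d)$ by the standard generating-function trick: summing $\zeta(\{\ol{2s}\}_d)$ over $d$ produces $\prod$ of $(1-\zeta_{2s}^{-}/k^{?})$-type factors, or more directly one uses
$$\sum_{d\ge0}\zeta(\{\ol{2s}\}_d)\,x^{2sd}=\prod_{k\ge1}\Bigl(1-\frac{(-1)^k x^{2s}}{k^{2s}}\Bigr),$$
which factors over the $2s$-th roots of $-x^{2s}$ into a product of sine/cosine-type infinite products, yielding a closed form in terms of $e^{\pi i/(2s)}$; extracting the coefficient of $x^{2sd}$ and dividing by $\zeta(2sd)=\dfrac{(-1)^{sd+1}(2\pi)^{2sd}B_{2sd}}{2(2sd)!}$ produces exactly the stated expression, after collecting the multinomial sum over $\sum n_j=sd$. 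That $c\in\Q$ is then automatic (it equals the displayed manifestly-algebraic quantity, which must be rational because the left side is).

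The main obstacle is the second step: proving $D_r\zeta^\fm(\{\ol{2s}\}_d)=0$ rather than merely $D_r\zeta^\fm(\{\ol{2s}\}_d)\in\calL_r\ot\calH_{2sd-r}$. The $\calL_r\ot\calH_{2sd-r}$ containment is a soft dimension count in the depth-one left factor, but the exact vanishing requires a genuine combinatorial cancellation among the cuts, and getting the sign bookkeeping right across the $2s$-periodic block structure (especially the boundary cuts that straddle two or more $\ol{2s}$-blocks) is where the real work lies. I expect this to be handled by the same subsequence-pairing lemma that will be developed for Theorem~\ref{thm:2bar3Motivic} in section~\ref{sec:proofOfThm:2bar3Motivic}.
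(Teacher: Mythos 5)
The main gap is in your argument for \eqref{equ:zbarmd}. You assert that the quotient sequence of any cut ``lies in $\calH_{md-r}$ by definition'' and that the left factor lands in $\calL_r$ by a ``soft dimension count.'' But in Theorem~\ref{thm-Glanois} the spaces $\calH=\calH^1$ and $\calL=\calL^1$ are the \emph{level one} (unramified) spaces, while a quotient sequence of $\rho(\{\bar m\}_d)$ is a priori only a level two object, i.e.\ an element of $\calH^2_{md-r}$; its membership in $\calH_{md-r}$ is precisely the unramifiedness statement being proved, not a formal consequence of its shape. Likewise the left factor is a priori only a class in $\calL^2_r$, and it need not be of depth one: a cut of odd length $r\ge m$ can straddle several $\ol{m}$-blocks and contain many nonzero entries (in the surviving term of the paper's proof it is $\zeta^\fl(\{\bar m\}_p)$, of depth $p$), so the reduction to ``finitely many depth-one Euler-sum type periods'' fails; moreover $\calL_r$ is not $\Q\,\zeta^\fl(r)$ in general (already $\dim\calL_{11}=2$), so even for genuinely depth-one pieces the dimension count you invoke is not available. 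The paper closes exactly this gap by induction on $d$: for odd $m$ it shows that all cuts cancel in pairs (same quotient, reversed subsequence, cancellation by (I5) since $r$ is odd), except, when $r=mp$ with $p<d$ odd, the single cut ending at the last $1$, which gives $D_r\zeta^\fm(\{\bar m\}_d)=\zeta^\fm(\{\bar m\}_p)\ot\zeta^\fm(\{\bar m\}_{d-p})$; both factors are unramified by the induction hypothesis, and only then does Theorem~\ref{thm-Glanois} apply. Without this cancellation-plus-induction step your proof of \eqref{equ:zbarmd} does not go through.

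For the even case your plan coincides with the paper's: prove $D_r\zeta^\fm(\{\ol{2s}\}_d)=0$ for all odd $r$ by pairing cuts, then use the last assertion of Theorem~\ref{thm-Glanois} and fix $c$ via the period map. The paper's pairing is concrete (each cut is matched with the one having the same quotient and reversed subsequence, which cancels by (I5) because $r$ is odd, together with the companion pairs obtained by exchanging $1$ and $\bar1$), whereas you leave it as a promissory note referring to later sections --- and that is where the actual work lies. For the constant, the paper simply quotes the analytic evaluation of $\zeta(\{\ol{2s}\}_d)$ from Shen--He; your generating-function derivation via $\prod_{k\ge1}\bigl(1+(-1)^k x^{2s}/k^{2s}\bigr)$ (note the sign) would also do, but it is not the missing ingredient.
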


\begin{proof}
When $d=1$ we see that $D_r\zeta^\fm(\bar m)=0$ for all $r<m$ since all cuts
for $D_r$ either have 0 on both ends or have all 0's inside. Thus  \eqref{equ:2sd}
follows directly from Theorem \ref{thm-Glanois}.

Suppose now $d>1$. To save space we let $m=n+1$ and use $\bar1$ to denote $-1$ . Then
\begin{equation*}
    \zeta^\fm(\{\bar m\}_{2\ell})=I^\fm(0;\{1,0_n,\bar1,0_n\}_\ell;1),\quad
   \zeta^\fm(\{\bar m\}_{2\ell+1})=I^\fm(0;\bar1,0_n,\{1,0_n,\bar1,0_n\}_\ell;1).
\end{equation*}
For every odd integer $r< m$ we clearly have $D_r\zeta^\fm(\{\bar m\}_d)=0$.

Let $r\ge m$ be an odd integer. Suppose first that $m=2s$ is an even integer. If $d=2\ell$ is even
let $r=m+i+2mp<md$ for some integer $i,p\ge 0$. Then all nonzero terms of $D_r$ can be paired as shown below:
\begin{center}
\begin{tikzpicture}[scale=0.9]
\node (A0) at (0.05,0) {$0;\{1,0_n,\bar1,0_n\}_j,1,0_{n-i-1},0,0_i,\{\bar1,0_n,1,0_n\}_p,\bar1,0_i,0,0_{n-i-1},1,0_n,,\bar1,0_n,\{1,0_n,\bar1,0_n\}_k;$};
\draw (8.7,0) node {\raisebox{0.1ex}{$1$}};
\node (A) at (-1.6,0.5) {${}$};
\draw (-5.1,0.25) to (-5.1,0.5) to (A) node {$\cic{1}$} to (1.6,0.5)  to (1.6,0.25);
\node (B) at (0.45,-0.5) {${}$};
\draw (-3.2,-0.25) to (-3.2,-0.5) to (B) node {$\cic{2}$} to (3.6,-0.5)  to (3.6,-0.25);
\node (A) at (-0.5,-1.2) {Possible cuts of $D_r\zeta^\fm(\{\ol{n+1}\}_d), 2j+2p+2k+4=d$ is even.};
\end{tikzpicture}
\end{center}
Strictly speaking there are also cut pairs such as $\ncic{\bar1}+\ncic{\bar2}{}$\,
for which 1 and $\bar1$ are exchanged inside $\ncic{1}\,${} and $\ncic{2}\,${} in the above graph.
Every such pair have the same quotient sequence but reversed subsequences of each other.
Thus the pair cancel each other by path reversal (I5).
The argument for odd $d=2\ell+1$ is exactly the same. Thus, by Theorem \ref{thm-Glanois} we see that
$\zeta^\fm(\{\ol{2s}\}_d)=c\zeta^\fm(2sd)$ for some rational number $c$. Applying
the period map \eqref{equ:periodMap} we can verify \eqref{equ:2sd} by \cite[Thm.\ 2]{ShenHe2021}.

Suppose now that $m$ is an odd integer. Then the above proof still works unless $r=mp$ for some odd $p<d$.
Now, if a cut starts at a 0 immediately before a $\pm 1$ then the term will be
canceled by the cut starting from the next $\pm1$, unless the cut itself ends with the last 1
which yields the following single nonzero term
\begin{equation*}
D_r \zeta^\fm(\{\bar m\}_d)=\zeta^\fm(\{\bar m\}_p)\ot \zeta^\fm(\{\bar m\}_{d-p})\in\calL_{mp}\ot \calH_{m(d-p)}
\end{equation*}
by induction. Hence \eqref{equ:zbarmd} follows by Theorem \ref{thm-Glanois}. This completes the proof
of the proposition.
\end{proof}

\section{Proof of \eqref{equ:1bar2Motivic} in Theorem \ref{thm:bar21bar2Motivic} }
In this section, we first prove \eqref{equ:1bar2Motivic} due to its simplicity and repeated use throughout the paper.
The proof technique is also very typical for this kind of claims on unramified motivic Euler sums.
As pointed out by S.\  Charlton, equation \eqref{equ:1bar2Motivic} follows from the original proof of \eqref{equ:12bar}
in \cite{Zhao2010a} which uses the double shuffle relations and the distribution relations all of which are motivic.
For convenience we restate  \eqref{equ:1bar2Motivic} as follows: for all $\ell\in\N$
\begin{equation}\label{equ:1bar2MotivicAgain}
  8^\ell \zeta^\fm(\{1,\bar2\}_\ell)=\zeta^\fm(3_\ell).
\end{equation}
We now prove by \eqref{equ:1bar2MotivicAgain} by induction on $\ell$.

\subsection{The base case}
If $\ell=1$ then we see easily that $D_1 \zeta^\fm(1,\bar2)=0$ by Lemma~\ref{lem:D1}.
Thus by Theorem~\ref{thm-Glanois} $\zeta^\fm(1,\bar2)=c\zeta^\fm(3)$ for some $c\in \Q$.
Applying the period map \eqref{equ:periodMap} and comparing with \eqref{equ:12bar} we get $c=1/8$ immediately.

We now assume that \eqref{equ:1bar2MotivicAgain} holds for all positive integers $\ell<k$. We need to show that
\begin{equation}\label{equ:D_r1bar2}
 8^k D_r \zeta^\fm(\{1,\bar2\}_k)=D_r \zeta^\fm(3_k)
\end{equation}
for all positive integers $r=6n+3, 6n+5, 6n+7<3k$ where $n\in\Z$.

\subsection{Inductive step}

\subsubsection{$r=6n+3$}
We have the following picture when $k$ is even (odd $k$ case can be dealt with similarly).
\begin{center}
\begin{tikzpicture}[scale=0.9]
\node (A1) at (-2.6,0) {$0;$};
\node (A2) at (-2.2,0) {$1,$};
\node (A3) at (-1.8,0) {$1,$};
\draw[dashed] (-1.6,0.7) to (-1.6,-0.7);
\node (A4) at (-1.4,0) {$0,$};
\node (A5) at (-1,0) {$\bar1,$};
\node (A6) at (-0.6,0) {$\bar1,$};
\draw[dashed] (-0.2,0.7) to (-0.2,-0.7);
\node (A7) at (0.2,0) {$\cdots\!,$};
\draw[dashed] (0.6,0.7) to (0.6,-0.7);
\node (A7) at (0.85,0) {$0,$};
\node (A7) at (1.25,0) {$1,$};
\node (A9) at (1.65,0) {$1,$};
\node (A9) at (2.8,0.04) {$\{0\bar1\bar1011\}_n$};
\node (A10) at (4.05,0) {$0,$};
\node (A11) at (4.45,0) {$\bar1,$};
\node (A11) at (4.85,0) {$\bar1,$};
\node (A12) at (5.25,0) {$0,$};
\node (A13) at (5.65,0) {$1,$};
\node (A13) at (6.45,0) {$1,\cdots\!,$};
\node (A14) at (7.35,0) {$0,$};
\node (A14) at (7.75,0) {$\bar1,$};
\node (A14) at (8.15,0) {$\bar1,$};
\node (A15) at (8.55,0) {$0;$};
\node (A16) at (8.95,0.03) {$1$};

\node (C1) at (2.6,-0.4) {${}$};
\node (D4) at (3,0.6) {${}$};
\node (D3) at (3.4,-0.6) {${}$};
\draw (0.8,-0.25) to (0.8,-0.4) to (C1) node {$\cic{1}$} to (4.4,-0.4)  to (4.4,-0.325);
\draw (1.2,0.25) to (1.2,0.6) to (D4) node {$\cic{2}$} to (4.8,0.6)  to (4.8,0.25);
\draw (1.6,-0.25) to (1.6,-0.6) to (D3) node {$\cic{3}$} to (5.2,-0.6)  to (5.2,-0.25);
\draw[dashed] (1.8,0.7) to (1.8,-0.7);
\draw[dashed] (3.8,0.7) to (3.8,-0.7);
\draw[dashed] (6.2,0.7) to (6.2,-0.7);
\draw[dashed] (5,0.7) to (5,-0.7);
\draw[dashed] (7.1,0.7) to (7.1,-0.7);
\draw[dashed] (8.3,0.7) to (8.3,-0.7);
\node (lab) at (2.8,-1.2) {Possible cuts of $D_{6n+3} \zeta^\fm(\{1,\bar2\}_k)$};
\end{tikzpicture}
\end{center}

Suppose a cut starts in a 011-block. It is easy to see that $\ncic{1}+\ncic{3}=0$ since they have opposite subsequences but the same quotient one. Further, the subsequence is anti-symmetric so that $\ncic{2}=0$ by path reversal (I5) and homothety with $\alpha=-1$ in (I6). If  a cut starts in a $0\bar1\bar1$-block then the same argument works. However, if a cut of type $\ncic{\bar1}${} ends at the last 1 then it cannot be canceled. Thus
\begin{equation*}
8^k D_{6n+3} \zeta^\fm(\{1,\bar2\}_k)= 8^k \zeta^\fl(\{1,\bar2\}_{2n+1})\ot \zeta^\fm(\{1,\bar2\}_{k-2n-1})
=\zeta^\fl(3_{2n+1})\ot \zeta^\fm(3_{k-2n-1})
\end{equation*}
by induction. Similarly, by the picture
\begin{center}
\begin{tikzpicture}[scale=0.9]
\node (A4) at (-1.4,0) {$0;$};
\node (A5) at (-1,0) {$1,$};
\node (A6) at (-0.6,0) {$0,$};
\draw[dashed] (-0.3,0.7) to (-0.3,-0.7);
\node (A7) at (0.1,0) {$\cdots\!,$};
\draw[dashed] (0.5,0.7) to (0.5,-0.7);
\node (A7) at (0.75,0) {$0,$};
\node (A7) at (1.15,0) {$1,$};
\node (A9) at (1.55,0) {$0,$};
\node (A9) at (2.8,0.04)  {$\{010010\}_n$};
\node (A10) at (4.05,0) {$0,$};
\node (A11) at (4.45,0) {$1,$};
\node (A11) at (4.85,0) {$0,$};
\node (A12) at (5.25,0) {$0,$};
\node (A13) at (5.65,0) {$1,$};
\node (A13) at (6.45,0) {$0,\cdots\!,$};
\node (A14) at (7.35,0) {$0,$};
\node (A14) at (7.75,0) {$1,$};
\node (A14) at (8.15,0) {$0,$};
\node (A15) at (8.55,0) {$0;$};
\node (A16) at (8.95,.03) {$1$};
\node (C1) at (2.6,-0.6) {${}$};
\node (D4) at (3,0.6) {${}$};
\draw (0.7,-0.25) to (0.7,-0.6) to (C1) node {$\cic{1}$} to (4.4,-0.6)  to (4.4,-0.25);
\draw (1.1,0.25) to (1.1,0.6) to (D4) node {$\cic{2}$} to (4.8,0.6)  to (4.8,0.25);
\draw[dashed] (1.8,0.7) to (1.8,-0.7);
\draw[dashed] (3.8,0.7) to (3.8,-0.7);
\draw[dashed] (5.05,0.7) to (5.05,-0.7);
\draw[dashed] (6.25,0.7) to (6.25,-0.7);
\draw[dashed] (7.1,0.7) to (7.1,-0.7);
\draw[dashed] (8.35,0.7) to (8.35,-0.7);
\node (lab) at (3,-1.2) {Possible cuts of $D_{6n+3} \zeta^\fm(3_k)$};
\end{tikzpicture}
\end{center}
we see that $\ncic{1}+\ncic{2}=0$ unless the cut $\ncic{1}\,${} ends at the last 1 so that it cannot be canceled. Thus
\eqref{equ:D_r1bar2} holds for $r=6n+3$.

\subsubsection{$r=6n+5$}
It is obvious that $D_{6n+5}=0$ on both sides of \eqref{equ:D_r1bar2} since both $\rho(3_k)$ and $\rho(\{1,\bar2\}_k)$
have period 6 so that every cut of $D_{6n+5}$ starts and ends with the same number.

\subsubsection{$r=6n+7$}
We have the following pictures of possible cuts.

\begin{equation} \label{fig:r=6n+7-bar21}
\text{
\begin{tikzpicture}[scale=0.9]
\node (A1) at (-2.6,0) {$0;$};
\node (A2) at (-2.2,0) {$1,$};
\node (A3) at (-1.8,0) {$1,$};
\draw[dashed] (-1.6,0.7) to (-1.6,-0.7);
\node (A4) at (-1.4,0) {$0,$};
\node (A5) at (-1,0) {$\bar1,$};
\node (A6) at (-0.6,0) {$\bar1,$};
\draw[dashed] (-0.3,0.7) to (-0.3,-0.7);
\node (A7) at (0.1,0) {$\cdots\!,$};
\draw[dashed] (0.5,0.7) to (0.5,-0.7);
\node (A7) at (0.75,0) {$0,$};
\node (A7) at (1.15,0) {$1,$};
\node (A9) at (1.55,0) {$1,$};
\node (A9) at (2.8,0.04) {$\{0\bar1\bar1011\}_n$};
\node (A10) at (4.05,0) {$0,$};
\node (A11) at (4.45,0) {$\bar1,$};
\node (A11) at (4.85,0) {$\bar1,$};
\node (A12) at (5.25,0) {$0,$};
\node (A13) at (5.65,0) {$1,$};
\node (A10) at (6.05,0) {$1,$};
\node (A10) at (6.45,0) {$0,$};
\node (A11) at (6.85,0) {$\bar1,$};
\node (A13) at (7.65,0) {$\bar1,\cdots\!,$};
\node (A14) at (8.55,0) {$0,$};
\node (A14) at (8.95,0) {$\bar1,$};
\node (A14) at (9.35,0) {$\bar1,$};
\node (A15) at (9.75,0) {$0;$};
\node (A16) at (10.15,0.03) {$1$};
\node (D3) at (3.4,-0.4) {${}$};
\draw (.7,-0.25) to (.7,-0.4) to (D3) node {$\cic{1}$} to (6.0,-0.4)  to (6.0,-0.25);
\node (C1) at (4.2,-0.6) {${}$};
\node (D4) at (3.95,0.6) {${}$};
\draw (1.5,-0.25) to (1.5,-0.6) to (C1) node {$\cic{3}$} to (6.8,-0.6)  to (6.8,-0.25);
\draw (1.1,0.25) to (1.1,0.6) to (D4) node {$\cic{2}$} to (6.4,0.6)  to (6.4,0.25);
\draw[dashed] (1.8,0.7) to (1.8,-0.7);
\draw[dashed] (3.8,0.7) to (3.8,-0.7);
\draw[dashed] (5.05,0.7) to (5.05,-0.7);
\draw[dashed] (6.25,0.7) to (6.25,-0.7);
\draw[dashed] (7.45,0.7) to (7.45,-0.7);
\draw[dashed] (8.3,0.7) to (8.3,-0.7);
\draw[dashed] (9.55,0.7) to (9.55,-0.7);
\node (lab) at (4,-1.2) {Possible cuts of $D_{6n+7} \zeta^\fm(\{1,\bar2\}_k)$};
\end{tikzpicture} }
\end{equation}

\begin{equation} \label{fig:r=6n+7-333}
\text{
\begin{tikzpicture}[scale=0.9]
\node (A4) at (-1.4,0) {$0,$};
\node (A5) at (-1,0) {$1,$};
\node (A6) at (-0.6,0) {$0,$};
\draw[dashed] (-0.3,0.7) to (-0.3,-0.7);
\node (A7) at (0.1,0) {$\cdots\!,$};
\draw[dashed] (0.5,0.7) to (0.5,-0.7);
\node (A7) at (0.75,0) {$0,$};
\node (A7) at (1.15,0) {$1,$};
\node (A9) at (1.55,0) {$0,$};
\node (A9) at (2.8,0.04)  {$\{010010\}_n$};
\node (A10) at (4.05,0) {$0,$};
\node (A11) at (4.45,0) {$1,$};
\node (A11) at (4.85,0) {$0,$};
\node (A12) at (5.25,0) {$0,$};
\node (A13) at (5.65,0) {$1,$};
\node (A10) at (6.05,0) {$0,$};
\node (A10) at (6.45,0) {$0,$};
\node (A11) at (6.85,0) {$1,$};
\node (A13) at (7.65,0) {$0,\cdots\!,$};
\node (A14) at (8.55,0) {$0,$};
\node (A14) at (8.95,0) {$1,$};
\node (A14) at (9.35,0) {$0,$};
\node (A15) at (9.75,0) {$0;$};
\node (A16) at (10.15,0.03) {$1$};
\node (C1) at (4.2,-0.6) {${}$};
\node (D4) at (3.95,0.6) {${}$};
\draw (1.5,-0.25) to (1.5,-0.6) to (C1) node {$\cic{2}$} to (6.8,-0.6)  to (6.8,-0.25);
\draw (1.1,0.25) to (1.1,0.6) to (D4) node {$\cic{1}$} to (6.4,0.6)  to (6.4,0.25);
\draw[dashed] (1.8,0.7) to (1.8,-0.7);
\draw[dashed] (3.8,0.7) to (3.8,-0.7);
\draw[dashed] (5.05,0.7) to (5.05,-0.7);
\draw[dashed] (6.25,0.7) to (6.25,-0.7);
\draw[dashed] (7.45,0.7) to (7.45,-0.7);
\draw[dashed] (8.3,0.7) to (8.3,-0.7);
\draw[dashed] (9.55,0.7) to (9.55,-0.7);
\node (lab) at (5,-1.2) {Possible cuts of $D_{6n+7} \zeta^\fm(3_k)$};
\end{tikzpicture} }
\end{equation}
It is clear that $\ncic{3}\,${} has an anti-symmetric subsequence so that it disappears and $\ncic{1}+\ncic{2}=0$ in both pictures.
This implies that \eqref{equ:D_r1bar2} holds for $r=6n+7$.

\medskip
By combining all the cases for different $r$'s above, we see that  \eqref{equ:D_r1bar2} holds
for all odd $r<k$. By Theorem~\ref{thm-Glanois} there is some $c\in\Q$
such that $8^k \zeta^\fm(\{1,\bar2\}_k)=\zeta^\fm(3_k)+c\,\zeta^\fm(3k)$.
Applying the period map \eqref{equ:periodMap} and using formula \eqref{equ:12bar}
we finally see that $c=0$ which implies \eqref{equ:1bar2Motivic} (or its restatement \eqref{equ:1bar2MotivicAgain}) immediately.

\subsection{Proof of base cases of Theorem \ref{thm:2bar3Motivic}}
We first prove Theorem \ref{thm:2bar3Motivic} when $a=0$ and $a=1$ as part of the base cases.
We will then prove the full Theorem \ref{thm:2bar3Motivic} by induction in the next section.

\begin{lem} \label{lem:2bar3Motivic}
For all integers $\ell\ge 0$ the motivic Euler sums
\begin{align}\label{equ:2bar3}
&\zeta^\fm(\{\bar2,3\}_\ell), \qquad \zeta^\fm(\{\bar2,3\}_\ell,\bar2), \\
&\zeta_1^\fm(\{\bar2,3\}_\ell), \qquad \zeta_1^\fm(\{\bar2,3\}_\ell,\bar2),\label{equ:1-2bar3} \\
&\zeta^\fm(\{3,\bar2\}_\ell), \qquad \zeta^\fm(\{3,\bar2\}_\ell,3), \label{equ:3bar2}\\
&\zeta_1^\fm(\{3,\bar2\}_\ell), \qquad \zeta_1^\fm(\{3,\bar2\}_\ell,3) \label{equ:13bar2}
\end{align}
are all unramified.
\end{lem}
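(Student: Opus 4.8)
By Theorem~\ref{thm-Glanois} it suffices to prove, for every weight-$k$ motivic Euler sum $\zeta^\fm_a(\bfs)$ listed in \eqref{equ:2bar3}--\eqref{equ:13bar2}, that $D_1\zeta^\fm_a(\bfs)=0$ and that $D_r\zeta^\fm_a(\bfs)\in\calL_r\ot\calH_{k-r}$ for every odd $r$ with $3\le r<k$. Since no $\bar1$ occurs in any of these words, $D_1\zeta^\fm_a(\bfs)=0$ by Lemma~\ref{lem:D1}. The remaining assertion is established by a single induction on $\ell$ carrying all eight families along at once (and the cases $a=0$, $a=1$ together), with the identity \eqref{equ:1bar2MotivicAgain} and Proposition~\ref{prop-singleES} as external inputs. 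When $\ell=0$ the sums are $1$, $\zeta^\fm(\bar2)$ or $\zeta^\fm_1(\bar2)$, of weight $\le 3$; here every $D_r$ vanishes for want of a non-trivial cut, so Theorem~\ref{thm-Glanois} gives $\zeta^\fm_a(\bfs)=c\,\zeta^\fm(k)$ with $c\in\Q$, in particular unramified.

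\noindent\textbf{Inductive step.} For $\ell\ge 1$ regard $\rho(\bfs)$ as a word of period $10$: a concatenation of length-$5$ blocks---$(\eps,0,-\eps,0,0)$ for a $\{\bar2,3\}$-pair and $(\eps,0,0,\eps,0)$ for a $\{3,\bar2\}$-pair, with $\eps=\pm1$---the block sign $\eps$ reversing from one block to the next, preceded by a single $0$ when $a=1$ and, in the two families whose last entry is an extra $\bar2$ or $3$, followed by a half-block $(\eps,0)$ or $(\eps,0,0)$. Fix an odd $r$ with $3\le r<k$; by periodicity only $r\bmod 10$ matters apart from finitely many small cases. Exactly as in the pictures used to prove \eqref{equ:D_r1bar2}, one enumerates the cuts $I^\fl(a_p;a_{p+1},\dots,a_{p+r};a_{p+r+1})\ot I^\fm(\dots)$ of $D_r\zeta^\fm_a(\bfs)$ according to where the cut falls inside a block, and checks that---apart from a short, explicit list of survivors---every cut is annihilated by one of the following mechanisms: it has $0$ at both ends with only $0$'s strictly between, hence vanishes by (I2)/(I4); it pairs with the cut obtained by shifting its subsequence through one block, with the same quotient sequence and reversed subsequence, the pair cancelling by path reversal (I5) since $r$ is odd; or it has an anti-palindromic subsequence, so that (I5) together with homothety (I6) by $-1$ makes it equal to its own negative. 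After these cancellations the surviving terms, once the left and right words are normalised by (I5)--(I8), take the form $\zeta^\fl(3_j)\ot\zeta^\fm(\bft)$ or $\zeta^\fl(\{\bar2\}_j)\ot\zeta^\fm(\bft)$, where $\bft$ is again one of the eight families with smaller $\ell$, or a pure $3_j$, or a $\{\bar2\}_j$. Hence the right factor lies in $\calH_{k-r}$ by the induction hypothesis, by \eqref{equ:1bar2MotivicAgain}, or by Proposition~\ref{prop-singleES}, and the left factor lies in $\calL_r$ (again via Proposition~\ref{prop-singleES} for the $\{\bar2\}_j$ case). Therefore $D_r\zeta^\fm_a(\bfs)\in\calL_r\ot\calH_{k-r}$, and Theorem~\ref{thm-Glanois} closes the induction.

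\noindent\textbf{Main obstacle.} The substantive work is the cut analysis: for each residue $r\bmod 10$ and each of the (four word-shapes)$\times$(two choices of $a$) one must list every cut type and verify it is of one of the three vanishing kinds except for the controlled survivors, tracking signs carefully (this is where homothety by $-1$ is used repeatedly) and watching how (I5)--(I8) normalise the surviving left and right words. A second, subtler point is that the induction must genuinely close: no surviving quotient sequence may fall outside the finite collection of Euler sums already in play, which is exactly why the shuffle-regularized members in \eqref{equ:1-2bar3} and \eqref{equ:13bar2} are fed into the induction from the start, and why the families $\zeta^\fm(3_j)$ and $\zeta^\fm(\{\bar2\}_j)$ are imported as known unramified ones. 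No explicit evaluation of these Euler sums as $\Q$-linear combinations of motivic MZVs is required at this stage---Theorem~\ref{thm-Glanois} only demands that each $D_r$ land in $\calL_r\ot\calH_{k-r}$---which is precisely what makes this lemma softer than Theorem~\ref{thm:bar21bar2Motivic}.
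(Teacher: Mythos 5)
Your overall route is the same as the paper's: Glanois's criterion (Theorem~\ref{thm-Glanois}) plus Lemma~\ref{lem:D1}, a simultaneous induction on $\ell$ over all eight families, and a cut analysis of $D_r$ organized by the period-$10$ structure of $\rho(\bfs)$, with cancellations by (I2)/(I4), path reversal (I5), and anti-symmetry via (I5)--(I6). However, there is a genuine gap in your description of what survives the cancellations, and this is exactly the step that carries the weight of the proof. You claim that every surviving term normalises to $\zeta^\fl(3_j)\ot\zeta^\fm(\bft)$ or $\zeta^\fl(\{\bar2\}_j)\ot\zeta^\fm(\bft)$, so that the left factor lies in $\calL_r$ either trivially or by Proposition~\ref{prop-singleES}. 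That is false. When one actually performs the enumeration, the uncancelled cuts have left factors which are themselves (regularized) members of the eight families with smaller index: for example $D_{10n+3}\zeta^\fm(\{\bar2,3\}_k)$ produces $\zeta_1^\fl(\{\bar2,3\}_{2n},\bar2)\ot\zeta^\fm(\{\bar2,3\}_{k-2n-1},\bar2)$ and $\zeta^\fl(\{3,\bar2\}_{2n},3)\ot\zeta^\fm(\{\bar2,3\}_{k-2n-1},\bar2)$; the $r=10n+5$ case produces $\zeta^\fl(\{\bar2,3\}_{2n+1})\ot\zeta^\fm(\{\bar2,3\}_{k-2n-1})$; the $r=10n+11$ case produces $\zeta_1^\fl(\{\bar2,3\}_{2n+2})$ as left factor; and already at $\ell=1$ one finds left factors such as $\zeta^\fl(\bar2,3)$ and $\zeta^\fl(3,\bar2)$. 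No amount of (I5)--(I8) turns these mixed words into pure $3_j$ or $\bar2_j$ strings, so Proposition~\ref{prop-singleES} does not place them in $\calL_r$; one must invoke the induction hypothesis \emph{for the left factors as well}.

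This also explains a second misstatement in your closing discussion: the shuffle-regularized families \eqref{equ:1-2bar3} and \eqref{equ:13bar2} are not forced into the induction because they may appear as quotient (right) sequences — in the inductive step for the $a=0$ families all quotient sequences stay of $a=0$ type — but because cuts beginning just before a block produce \emph{subsequences} with a leading $0$, i.e.\ left factors of the form $\zeta_1^\fl(\cdots)$. Unramifiedness of these left factors is precisely what the simultaneous (``quadruple'') induction supplies, and without it the inclusion $D_r\zeta^\fm_a(\bfs)\in\calL_r\ot\calH_{k-r}$ is not justified. Your framework (carrying all eight families and both $a=0,1$) can absorb the correct survivors once you amend the claimed normal form, but as written the key inclusion is argued from an incorrect premise. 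Two minor points: the identity \eqref{equ:1bar2MotivicAgain} is not needed anywhere in this lemma (right factors of the form $\zeta^\fm(3_j)$ or $\zeta_1^\fm(3_j)$ are motivic MZVs and lie in $\calH$ trivially), and you should also record the $\ell=1$ computations explicitly (as the paper does), since survivors such as $\zeta^\fl(\bar2,3,\bar2)$ with small index are consumed by later inductive steps.
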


We originally only planned to prove the pair of families in \eqref{equ:2bar3}
are both unramified.  When we tried the induction we found that we had to use a ``quadruple'' induction to
prove all the four pairs in Lemma~\ref{lem:2bar3Motivic} are unramified. This is what will do below.

By simple computation or using the datamine \cite{BlumleinBrVe2010} we get
\begin{align*}
    \zeta(\bar2,3)=&\, -\tfrac{1}{8}\zeta(3,2)-\tfrac{3}{32}\zeta(5),\\
    \zeta(\bar2,3,\bar2)=&\, \tfrac{7}{640}\zeta(7)-\tfrac{21}{640}\zeta(3,4),\\
 \zeta_1(\bar2,3)=&\, - \big(2\zeta(\bar3,3)+3 \zeta(\bar2,4) \big)=\tfrac{3}{8}\zeta(3)^2+\tfrac{5}{64}\zeta(6),\\
 \zeta_1(\bar2,3,\bar2)=&\,-\big(2\zeta(\bar3,3,\bar2)+3 \zeta(\bar2,4,\bar2)+2\zeta(\bar2,3,\bar3) \big)\\
   =&\, \tfrac{41}{32}\zeta(6,2)-\tfrac{185}{64}\zeta(1,7)-\frac{1}{8}\zeta(2,3,3)-\tfrac{215}{256}\zeta(8),\\
 \zeta(3,\bar2)=&\, \tfrac{7}{32}\zeta(5)-\tfrac{1}{4}\zeta(2,3),\\
\zeta(3,\bar2,3)=&\,\tfrac{3}{64}\zeta(6,2)-\tfrac{3}{16}\zeta(3,5)+\tfrac{3}{16}\zeta(3,3,2)-\tfrac{1}{48}\zeta(8),\\
\zeta_1(3,\bar2)=&\,- \big(2\zeta(3,\bar3)+3 \zeta(4,\bar2) \big)= \tfrac{147}{64}\zeta(6)-\tfrac{9}{8}\zeta(3)^2,\\
\zeta_1(3,\bar2,3)=&\,-\big(3\zeta(4,\bar2,3)+2\zeta(3,\bar3,3)+3\zeta(3,\bar2,4) \big)\\
 =&\,\tfrac{153}{64}\zeta(7,2)-\tfrac{21}{4}\zeta(6,3)-\tfrac{387}{64}\zeta(3,6)+\tfrac{11}{8}\zeta(9) .
\end{align*}
This convinced us that the base cases are all correct even though we need to do this on the motivic level.
First, we can see easily that $D_1=0$ for all the motivic Euler sums in Theorem~\ref{thm:2bar3Motivic} by Lemma \ref{lem:D1}.

\subsubsection{The base cases}
Lemma~\ref{lem:2bar3Motivic} clearly holds for $\ell=0$. To see the basic ideas we now prove $\ell=1$ case.
To save space, we often suppress the commas in the sub- and quotient sequence in the rest of the paper.
Using the left picture below we can check that
\begin{align*}
    D_3\zeta^\fm(\bar2,3)=&\,I^\fl(\bar1;010;0)\ot I^\fm(0;\bar10;1)+I^\fl(0;100;1)\ot I^\fm(0;\bar10;1)\\
        =&\,(\zeta^\fl(3)+\zeta_1^\fl(\bar2))\ot \zeta^\fm(\bar2)
        =(\zeta^\fl(3)-2\zeta^\fl(\bar3))\ot \zeta^\fm(\bar2) \in \calL_3\ot \calH_2,
\end{align*}
by (I3) and Prop.~\ref{prop-singleES}, which shows that $\zeta^\fm(\bar2,3)\in\calH_5$ by Theorem \ref{thm-Glanois}.
\begin{center}
\begin{tikzpicture}[scale=0.9]
\node (A0) at (0.05,0) {$0;$};
\node (A1) at (0.45,0) {$\bar1,$};
\node (A2) at (0.85,0) {$0,$};
\node (A3) at (1.25,0) {$1,$};
\node (B3) at (1.2,0.4) {${}$};
\node (A4) at (1.65,0) {$0,$};
\node (D4) at (1.6,-0.4) {${}$};
\node (A5) at (2.05,0) {$0;$};
\node (A6) at (2.45,0) {$1$};
\draw (0.4,0.25) to (0.4,0.4) to (B3) node {$\cic{1}$} to (2.0,0.4)  to (2.0,0.25);
\draw (0.8,-0.25) to (0.8,-0.4) to (D4) node {$\cic{2}$} to (2.4,-0.4)  to (2.4,-0.25);
\node (lab) at (1.2,-1) {$D_3\zeta^\fm(\bar2,3)$};
\end{tikzpicture}
\qquad
\begin{tikzpicture}[scale=0.9]
\node (RA0) at (5.05,0) {$0;$};
\node (RA1) at (5.45,0) {$1,$};
\node (RA4) at (5.85,0) {$0,$};
\node (RA3) at (6.25,0) {$\bar1,$};
\node (RA4) at (6.65,0) {$0,$};
\node (RA5) at (7.05,0) {$0,$};
\node (RA6) at (7.45,0) {$\bar1,$};
\node (RA7) at (7.85,0) {$0;$};
\node (RA8) at (8.25,0) {$1\phantom{,}$};
\node (R1) at (6.2,0.4) {${}$};
\node (R2) at (6.6,0.6) {${}$};
\node (R3) at (7.0,-0.4) {${}$};
\node (R4) at (7.4,-0.6) {${}$};
\draw (5.4,0.25) to (5.4,0.4) to (R1) node {$\cic{1}$} to (7.0,0.4)  to (7.0,0.25);
\draw (5.8,0.25) to (5.8,0.6) to (R2) node {$\cic{2}$} to (7.4,0.6)  to (7.4,0.25);
\draw (6.2,-0.25) to (6.2,-0.4) to (R3) node {$\cic{3}$} to (7.8,-0.4)  to (7.8,-0.25);
\draw (6.6,-0.25) to (6.6,-0.6) to (R4) node {$\cic{4}$} to (8.2,-0.6)  to (8.2,-0.25);
\node (lab) at (6.5,-1) {$D_3\zeta^\fm(\bar2,3,\bar2)$};
\end{tikzpicture}
\qquad
\begin{tikzpicture}[scale=0.9]
\node (RA0) at (5.05,0) {$0;$};
\node (RA1) at (5.45,0) {$1,$};
\node (RA2) at (5.85,0) {$0,$};
\node (RA3) at (6.25,0) {$\bar1,$};
\node (RC3) at (6.2,-0.4) {${}$};
\node (RA4) at (6.65,0) {$0,$};
\node (RB4) at (6.6,0.4) {${}$};
\node (RA5) at (7.05,0) {$0,$};
\node (RA6) at (7.45,0) {$\bar1,$};
\node (RA7) at (7.85,0) {$0;$};
\node (RA8) at (8.25,0) {$1\phantom{,}$};
\node (RD5) at (7.0,0.6) {${}$};
\draw (5,-0.25) to (5,-0.4) to (RC3) node {$\cic{0}$} to (7.4,-0.4)  to (7.4,-0.25);
\draw (5.4,0.25) to (5.4,0.4) to (RB4) node {$\cic{\bar1}$} to (7.8,0.4)  to (7.8,0.25);
\draw (5.8,0.25) to (5.8,0.6) to (RD5) node {$\cic{\bar2}$} to (8.2,0.6)  to (8.2,0.25);
\node (lab) at (6.5,-1) {$D_5\zeta^\fm(\bar2,3,\bar2)$};
\end{tikzpicture}
\end{center}

To compute $D_3\zeta^\fm(\bar2,3,\bar2)$ we can use the middle picture above to see that
$\ncic{1}+\ncic{4}=0$ and $\ncic{2}+\ncic{3}=0$ by path reversal. Hence, $D_3\zeta^\fm(\bar2,3,\bar2)=0$.
Further, using the right picture above we get
\begin{align*}
    D_5\zeta^\fm(\bar2,3,\bar2)=&\,I^\fl(0;10\bar100;\bar1)\ot I^\fm(0;\bar10;1)
    +\big(I^\fl(1;0\bar100\bar1;0) +I^\fl(0;\bar100\bar10;1) \big)\ot I^\fm(0;10;1)\\
 =&\,I^\fl(0;\bar10100;1)\ot I^\fm(0;\bar10;1)=\zeta^\fl(\bar2, 3)\ot \zeta^\fm(\bar2)
\end{align*}
by homothety and path reversal. Thus by the above proof that $\zeta^\fm(\bar2, 3)\in \calH_5$ and Prop.~\ref{prop-singleES}
we get $D_5\zeta^\fm(\bar2,3,\bar2)\in \calL_5\ot \calH_2.$

The above computation shows that both motivic Euler sums in \eqref{equ:2bar3} are unramified if $\ell=1$ by Lemma~\ref{lem:D1} and Theorem~\ref{thm-Glanois}.
One can similarly check that
$\zeta_1^\fm(\bar2,3), \zeta_1^\fm(\bar2,3,\bar2), \zeta^\fm(3,\bar2), \zeta^\fm(3,\bar2,3)$
are all unramified. Indeed, by similar computation as above, we get
\begin{align*}
D_3\zeta_1^\fm(\bar2,3)=&\,  I^\fl(0;0\bar10;1)\ot I^\fm(0;100;1)+
   \big( I^\fl(\bar1;010;0)+I^\fl(0;100;1)\big)\ot I^\fm(0;0\bar10;1)\\
=&\, \zeta_1^\fl(\bar2)\ot\zeta^\fm(3)+\big(\zeta^\fl(3)-\zeta_1^\fl(\bar2)\big)\ot \zeta_1^\fm(\bar2) \in \calL_3\ot \calH_3, \\
D_3\zeta_1^\fm(\bar2,3,\bar2)=&\,  I^\fl(0;010;\bar1)\ot I^\fm(0;\bar100\bar10;1)
   +\big( I^\fl(1;0\bar10;0)+I^\fl(0;\bar100;\bar1)\big)\ot I^\fm(0;010\bar10;1) \\
  &\,  + \big(I^\fl(\bar1;00\bar1;0) + I^\fl(0;0\bar10;1) \big)\ot I^\fm(0;010\bar10;1)  \\
=&\, \zeta_1^\fl(\bar2)\ot\zeta^\fm(3,\bar2)  \in \calL_3\ot \calH_5, \\
D_5\zeta_1^\fm(\bar2,3,\bar2)=&\,I^\fl(0;10\bar100;\bar1)\ot I^\fm(0;0\bar10;1)
    +\big(I^\fl(1;0\bar100\bar1;0) +I^\fl(0;\bar100\bar10;1) \big)\ot I^\fm(0;010;1)\\
 =&\, \zeta^\fl(\bar2, 3)\ot \zeta_1^\fm(\bar2)  \in \calL_5\ot \calH_3
\end{align*}
by essentially the same proof of unramification of $\zeta^\fm(\bar2,3)$ above.
\begin{center}
\begin{tikzpicture}[scale=0.9]
\node (A1) at (0.45,0) {$0;$};
\node (A2) at (0.85,0) {$0,$};
\node (A3) at (1.25,0) {$\bar1,$};
\node (A4) at (1.65,0) {$0,$};
\node (D4) at (1.6,0.4) {${}$};
\node (A5) at (2.05,0) {$0,$};
\node (C5) at (2.0,-0.4) {${}$};
\node (A6) at (2.45,0) {$\bar1,$};
\node (C6) at (2.4,-0.6) {${}$};
\node (A7) at (2.85,0) {$0;$};
\node (A8) at (3.25,0) {$1$};

\draw (0.8,0.25) to (0.8,0.4) to (D4) node {$\cic{1}$} to (2.4,0.4)  to (2.4,0.25);
\draw (1.2,-0.25) to (1.2,-0.4) to (C5) node {$\cic{2}$} to (2.8,-0.4)  to (2.8,-0.325);
\draw (1.6,-0.25) to (1.6,-0.6) to (C6) node {$\cic{3}$} to (3.2,-0.6)  to (3.2,-0.25);
\node (lab) at (1.8,-1.2) {$D_3\zeta_1^\fm(3,\bar2)$};
\end{tikzpicture}
\qquad
\begin{tikzpicture}[scale=0.9]
\node (A1) at (0.45,0) {$0;$};
\node (A2) at (0.85,0) {$0,$};
\node (A3) at (1.25,0) {$\bar1,$};
\node (A4) at (1.65,0) {$0,$};
\node (D4) at (1.6,0.4) {${}$};
\node (A5) at (2.05,0) {$0,$};
\node (C5) at (2.0,-0.4) {${}$};
\node (A6) at (2.45,0) {$\bar1,$};
\node (C6) at (2.4,-0.6) {${}$};
\node (A7) at (2.85,0) {$0,$};
\node (A8) at (3.25,0) {$1,$};
\node (D8) at (3.2,0.6) {${}$};
\node (A9) at (3.65,0) {$0,$};
\node (D9) at (3.6,0.4) {${}$};
\node (A10) at (4.05,0) {$0;$};
\node (A11) at (4.45,0) {$1,$};

\draw (0.8,0.25) to (0.8,0.4) to (D4) node {$\cic{1}$} to (2.4,0.4)  to (2.4,0.25);
\draw (1.2,-0.25) to (1.2,-0.4) to (C5) node {$\cic{2}$} to (2.8,-0.4)  to (2.8,-0.325);
\draw (1.6,-0.25) to (1.6,-0.6) to (C6) node {$\cic{3}$} to (3.2,-0.6)  to (3.2,-0.25);
\draw (2.4,0.25) to (2.4,0.6) to (D8) node {$\cic{4}$} to (4,0.6)  to (4,0.25);
\draw (2.8,0.25) to (2.8,0.4) to (D9) node {$\cic{5}$} to (4.4,0.4)  to (4.4,0.25);
\node (lab) at (2.7,-1.2) {$D_3\zeta_1^\fm(3,\bar2,3)$};
\end{tikzpicture}
\qquad
\begin{tikzpicture}[scale=0.9]
\node (A1) at (0.45,0) {$0;$};
\node (A2) at (0.85,0) {$0,$};
\node (A3) at (1.25,0) {$\bar1,$};
\node (A4) at (1.65,0) {$0,$};
\node (D4) at (2,0.4) {${}$};
\node (A5) at (2.05,0) {$0,$};
\node (C5) at (2.4,-0.4) {${}$};
\node (A6) at (2.45,0) {$\bar1,$};
\node (C6) at (3.2,-0.6) {${}$};
\node (A7) at (2.85,0) {$0,$};
\node (A8) at (3.25,0) {$1,$};
\node (A9) at (3.65,0) {$0,$};
\node (A10) at (4.05,0) {$0;$};
\node (A11) at (4.45,0) {$1,$};

\draw (0.8,0.25) to (0.8,0.4) to (D4) node {$\cic{3}$} to (3.2,0.4)  to (3.2,0.25);
\draw (1.2,-0.25) to (1.2,-0.4) to (C5) node {$\cic{2}$} to (3.6,-0.4)  to (3.6,-0.325);
\draw (2,-0.25) to (2,-0.6) to (C6) node {$\cic{1}$} to (4.4,-0.6)  to (4.4,-0.25);
\node (lab) at (2.5,-1.2) {$D_5\zeta_1^\fm(3,\bar2,3)$};
\end{tikzpicture}
\end{center}
Moreover, using  the above pictures ($\ncic{1}\,${} and $\ncic{2}\,${} always cancel) one checks easily that
\begin{align*}
D_3\zeta_1^\fm(3,\bar2)=&\,
    I^\fl(0;0\bar10;1)\ot I^\fm(0;0\bar10;1) =\zeta_1^\fl(\bar2)\ot \zeta_1^\fm(\bar2)  \in \calL_3\ot \calH_3, \\
D_5\zeta_1^\fm(3,\bar2)=&\, 0, \\
D_3\zeta_1^\fm(3,\bar2,3)=&\,
I^\fl(0;0\bar10;1)\ot I^\fm(0;0\bar10100;1) + \big( I^\fl(\bar1;010;0)+I^\fl(0;100;1)\big)\ot I^\fm(0;0\bar100\bar10;1) \\
=&\, \zeta_1^\fl(\bar2)\ot \zeta_1^\fm(\bar2,3)+\big(\zeta^\fl(3)- \zeta_1^\fl(\bar2)\big)\ot \zeta_1^\fm(3,\bar2) \in \calL_3\ot \calH_6, \\
D_5\zeta_1^\fm(3,\bar2,3)=&\, I^\fl(0;\bar100\bar10;1)\ot I^\fm(0;0100;1)
    =\zeta^\fl(3,\bar2)\ot \zeta_1^\fm(3) \in \calL_5\ot \calH_4, \\
D_7\zeta_1^\fm(3,\bar2,3)=&0,
\end{align*}
using the known unramified motivic Euler sums. Similarly,
\begin{align*}
D_3\zeta^\fm(3,\bar2)=&\,
    I^\fl(0;0\bar10;1)\ot I^\fm(0;\bar10;1) =\zeta_1^\fl(\bar2)\ot \zeta^\fm(\bar2)  \in \calL_3\ot \calH_2, \\
D_3\zeta^\fm(3,\bar2,3)=&\,
I^\fl(0;0\bar10;1)\ot I^\fm(0;\bar10100;1) + \big( I^\fl(\bar1;010;0)+I^\fl(0;100;1)\big)\ot I^\fm(0;\bar100\bar10;1) \\
=&\,\zeta_1^\fl(\bar2)\ot \zeta^\fm(\bar2,3)+\big(\zeta^\fl(3)- \zeta_1^\fl(\bar2)\big)\ot \zeta^\fm(3,\bar2) \in \calL_3\ot \calH_5, \\
D_5\zeta^\fm(3,\bar2,3)=&\, I^\fl(0;\bar100\bar10;1)\ot I^\fm(0;100;1)
    =\zeta^\fl(3,\bar2)\ot \zeta^\fm(3)\in \calL_5\ot \calH_3.
\end{align*}
The above computation shows that all the motivic Euler sums in Lemma~\ref{lem:2bar3Motivic} are unrmified when $\ell=1$.

We now assume that $k\ge 2$ and that the motivic Euler sums in \eqref{equ:2bar3} are unramified for all $\ell<k$.

\subsection{The inductive step for $\zeta^\fm(\{\bar2,3\}_k)$}
As the sequence $\rho(\{\bar2,3\}_k)$ has a period of 10, it is clear that $D_9=0$
and the pattern of $D_{10+r}$ is basically the same as that of $D_r$ for any $r>1$, with
one more of $\{\bar2,3\}_2$ in the subsequence and one less of $\{\bar2,3\}_2$ in the quotient.
Thus it suffices for us to show that $D_r\zeta^\fm(\{\bar2,3\}_k)$ are stable derivations for $r=10n+3,10n+5,10n+7,10n+11$,
meaning that all its factors are of similar forms which are then unramified by induction.

\subsubsection{$r=10n+3$}\label{subsec:2bar3caser=3}
We have the following picture when $k$ is odd and we only need to modify the left end by exchange 1 and $\bar1$ when $k$ is even.

\begin{center}
\begin{tikzpicture}[scale=0.9]
\node (A0) at (-2.75,0) {$0;$};
\node (A1) at (-2.35,0) {$\bar1,$};
\node (A2) at (-1.95,0) {$0,$};
\node (A3) at (-1.55,0) {$1,$};
\node (A13) at (-0.75,0) {$0,\ldots,$};
\node (A0) at (0.05,0) {$0,$};
\node (A1) at (0.45,0) {$1,$};
\node (A2) at (0.85,0) {$0,$};
\node (A3) at (1.25,0) {$\bar1,$};
\node (A4) at (1.65,0) {$0,$};
\node (A5) at (2.85,0) {$\{S_{\bar1},S_{1}\}_n$};
\node (A10) at (4.05,0) {$0,$};
\node (A11) at (4.45,0) {$\bar1,$};
\node (A11) at (4.85,0) {$0,$};
\node (A12) at (5.25,0) {$1,$};
\node (A13) at (6.05,0) {$0,\ldots,$};
\node (A14) at (6.95,0) {$0,$};
\node (A14) at (7.35,0) {$\bar1,$};
\node (A14) at (7.75,0) {$0,$};
\node (A15) at (8.15,0) {$1,$};
\node (A16) at (8.55,0) {$0,$};
\node (A17) at (8.95,0) {$0;$};
\node (A17) at (9.35,0) {$1\phantom{,}$};

\node (B3) at (2.2,0.4) {${}$};
\node (D4) at (2.6,-0.4) {${}$};
\node (C5) at (3.0,0.6) {${}$};
\node (C6) at (3.4,-0.6) {${}$};
\draw (0.4,0.25) to (0.4,0.4) to (B3) node {$\cic{1}$} to (4.0,0.4)  to (4.0,0.25);
\draw (0.8,-0.25) to (0.8,-0.4) to (D4) node {$\cic{2}$} to (4.4,-0.4)  to (4.4,-0.25);
\draw (1.2,0.25) to (1.2,0.6) to (C5) node {$\cic{3}$} to (4.8,0.6)  to (4.8,0.25);
\draw (1.6,-0.25) to (1.6,-0.6) to (C6) node {$\cic{4}$} to (5.2,-0.6)  to (5.2,-0.25);
\draw[dashed] (-.95,0.7) to (-.95,-.7);
\draw[dashed] (-0.15,0.7) to (-0.15,-.7);
\draw[dashed] (1.85,0.7) to (1.85,-0.7);
\draw[dashed] (3.8,0.7) to (3.8,-.7);
\draw[dashed] (5.85,0.7) to (5.85,-.7);
\draw[dashed] (6.7,0.7) to (6.7,-.7);
\draw[dashed] (8.75,0.7) to (8.75,-.7);
\node (lab) at (3.5,-1.2) {Possible cuts of $D_{10n+3}\zeta^\fm(\{\bar2,3\}_k)$};
\end{tikzpicture}
\end{center}
We call a subsequence $S_{\bar1}=(0\bar1010)$ a $S_{\bar1}$-block, and a subsequence $S_{1}=(010\bar10)$ a $S_{1}$-block.
For general $k$, the sequence $\rho(\{\bar2,3\}_k)$ always starts with a $S_{(-1)^k}$-block (here we identify $-1$ with $\bar 1$) and ends with a $S_{\bar1}$-block followed by $0,1$. For example, the picture above starts with a $S_{\bar1}$-block
$I^\fm(S_{\bar1},\ldots)=I^\fm(0;\bar1,0,1,0,\ldots).$

If a subsequence starts in a $S_1$-block and ends in a $S_{\bar1}$-block (as shown in the picture above)
then there are only four possible nonzero cuts with the same quotient sequence:
$$\aligned
{\ncic{1}}=&\, I^\fl(1;0,\bar1,0,\{S_{\bar1},S_{1}\}_n;0)\ot I^\fm(0;\bar1,\dots,0;1), \\
{\ncic{2}}=&\, I^\fl(0;\bar1,0,\{S_{\bar1},S_{1}\}_n,0;\bar1)\ot I^\fm(0;\bar1,\dots,0;1), \\
{\ncic{3}}=&\, I^\fl(\bar1;0,\{S_{\bar1},S_{1}\}_n,0,\bar1;0)\ot I^\fm(0;\bar1,\dots,0;1), \\
{\ncic{4}}=&\, I^\fl(0;\{S_{\bar1},S_{1}\}_n,0,\bar1,0;1)\ot I^\fm(0;\bar1,\dots,0;1).
\endaligned$$
By path reversal and homothety, we see immediately that $\ncic{1}+\ncic{4}=\ncic{2}+\ncic{3}=0$.

If a subsequence starts in a $S_{\bar1}$-block and ends in a $S_1$-block, then the argument is completely similar.
We only need to exchange $1$ and $\bar1$ and therefore the four terms still cancel each other.

Hence, we only need to consider the subsequences ending with the last 0 or the last 1.
Both of them must start in some $S_{\bar1}$-block and we thus get two terms
$$\aligned
{\ncic{\bar1}}=&\, I^\fl(\bar1;0,1,0,\{S_{1},S_{\bar1}\}_n;0)\ot I^\fm(\{S_{\bar1},S_1\}_{(k-1)/2-n},0,\bar1,0;1),\\
{\ncic{\bar2}}=&\, I^\fl(0;1,0,\{S_{1},S_{\bar1}\}_n,0;1)\ot I^\fm(\{S_{\bar1},S_1\}_{(k-1)/2-n},0,\bar1,0;1)
\endaligned$$
when $k$ is odd and
$$\aligned
{\ncic{1}}=&\, I^\fl(\bar1;0,1,0,\{S_{1},S_{\bar1}\}_n;0)\ot I^\fm(S_1,\{S_{\bar1},S_1\}_{k/2-n-1},0,\bar1,0;1), \\
{\ncic{2}}=&\, I^\fl(0;1,0,\{S_{1},S_{\bar1}\}_n,0;1)\ot I^\fm(S_1,\{S_{\bar1},S_1\}_{k/2-n-1},0,\bar1,0;1)
\endaligned$$
when $k$ is even. In both cases we can rewrite the two terms as (by using path reversal for the first)
$$\aligned
{\ncic{1}}=&\, -\zeta_1^\fl(\{\bar2,3\}_{2n},\bar2)\ot \zeta^\fm(\{\bar2,3\}_{k-2n-1},\bar2)\in\calL_{10n+3}\ot \calH_{5k-10n-3}, \\
{\ncic{2}}=&\,\zeta^\fl(\{3,\bar2\}_{2n},3)\ot \zeta^\fm(\{\bar2,3\}_{k-2n-1},\bar2)\in\calL_{10n+3}\ot \calH_{5k-10n-3}
\endaligned$$
since all the factors are unramified by induction as $k>2n>0$. For examples, when $k=5$ and $n=1$, we have
$$\aligned
{\ncic{\bar1}}=&\, I^\fl(\bar1;010010\bar100\bar1010;0)\ot I^\fm(0;\bar1010010\bar100\bar10;1),\\
=&\, -I^\fl(0;0\bar1010010\bar100\bar10;1)\ot I^\fm(0;\bar1010010\bar100\bar10;1)
=-\zeta_1^\fl(\bar2,3,\bar2,3,\bar2)\ot \zeta^\fm(\bar2,3,\bar2,3,\bar2),
\endaligned$$
and when $k=4$ and $n=1$, we have
$$
{\ncic{2}}= I^\fl(0;10010\bar100\bar10100;1)\ot I^\fm(0;10\bar100\bar10;1)
=\zeta^\fl(3,\bar2,3,\bar2,3)\ot \zeta^\fm(\bar2,3,\bar2).
$$

\subsubsection{$r=10n+5$}\label{subsec:2bar3caser=5}
We have the following picture when $k$ is odd and we only need to modify the left end by exchange 1 and $\bar1$ when $k$ is even.

\begin{center}
\begin{tikzpicture}[scale=0.9]
\node (A0) at (-2.75,0) {$0;$};
\node (A1) at (-2.35,0) {$\bar1,$};
\node (A2) at (-1.95,0) {$0,$};
\node (A3) at (-1.55,0) {$1,$};
\node (A13) at (-0.75,0) {$0,\ldots,$};
\node (A0) at (0.05,0) {$0,$};
\node (A1) at (0.45,0) {$1,$};
\node (A2) at (0.85,0) {$0,$};
\node (A3) at (1.25,0) {$\bar1,$};
\node (A4) at (1.65,0) {$0,$};
\node (A5) at (2.85,0) {$\{S_{\bar1},S_{1}\}_n$};
\node (A10) at (4.05,0) {$0,$};
\node (A11) at (4.45,0) {$\bar1,$};
\node (A11) at (4.85,0) {$0,$};
\node (A12) at (5.25,0) {$1,$};
\node (A13) at (6.05,0) {$0,\ldots,$};
\node (A14) at (6.95,0) {$0,$};
\node (A14) at (7.35,0) {$\bar1,$};
\node (A14) at (7.75,0) {$0,$};
\node (A15) at (8.15,0) {$1,$};
\node (A16) at (8.55,0) {$0,$};
\node (A17) at (8.95,0) {$0;$};
\node (A17) at (9.35,0) {$1\phantom{,}$};

\node (B3) at (2.6,0.4) {${}$};
\node (D4) at (3,-0.6) {${}$};
\node (C5) at (3.4,0.6) {${}$};
\node (C6) at (2.2,-0.4) {${}$};
\draw (0.4,0.25) to (0.4,0.4) to (B3) node {$\cic{1}$} to (4.8,0.4)  to (4.8,0.25);
\draw (0.8,-0.25) to (0.8,-0.6) to (D4) node {$\cic{2}$} to (5.2,-0.6)  to (5.2,-0.25);
\draw (1.2,0.25) to (1.2,0.6) to (C5) node {$\cic{3}$} to (5.6,0.6)  to (5.6,0.325);
\draw (0,-0.25) to (0,-0.4) to (C6) node {$\cic{0}$} to (4.4,-0.4)  to (4.4,-0.25);
\draw[dashed] (-.95,0.7) to (-.95,-.7);
\draw[dashed] (-0.15,0.7) to (-0.15,-.7);
\draw[dashed] (1.85,0.7) to (1.85,-0.7);
\draw[dashed] (3.8,0.7) to (3.8,-.7);
\draw[dashed] (5.85,0.7) to (5.85,-.7);
\draw[dashed] (6.7,0.7) to (6.7,-.7);
\draw[dashed] (8.75,0.7) to (8.75,-.7);
\node (lab) at (3.5,-1.2) {Possible cuts of $D_{10n+5}\zeta^\fm(\{\bar2,3\}_k)$};
\end{tikzpicture}
\end{center}

If a subsequence starts in a $S_{1}$-block as shown above
then there are only four possible nonzero terms with the same quotient sequence in $\ncic{1}\,${} and $\ncic{2}$:
$$\aligned
{\ncic{0}}=&\, I^\fl(0;1,0,\bar1,0,\{S_{\bar1},S_{1}\}_n,0;\bar1)\ot I^\fm(0;\dots,S_{\bar1},0\bar1010,S_1,\dots;1), \\
{\ncic{1}}=&\, I^\fl(1;0,\bar1,0,\{S_{\bar1},S_{1}\}_n,0,\bar1;0)\ot I^\fm(0;\dots,S_{\bar1},01010,S_1,\dots;1), \\
{\ncic{2}}=&\, I^\fl(0;\bar1,0,\{S_{\bar1},S_{1}\}_n,0,\bar1,0;1)\ot I^\fm(0;\dots,S_{\bar1},01010,S_1,\dots;1), \\
{\ncic{3}}=&\, I^\fl(\bar1;0,\{S_{\bar1},S_{1}\}_n,0,\bar1,0,1;0)\ot I^\fm(0;\dots,S_{\bar1},010\bar10,S_1,\dots;1).
\endaligned$$
By path reversal and homothety, we see easily that $\ncic{1}+\ncic{2}=0$. But we don't see
the immediate cancelation of $\ncic{0}\,${} and $\ncic{3}\,${}. However, all possible cuts of $\ncic{0}\,${} and $\ncic{\bar0}\,${} together yield
\begin{equation*}
\zeta^\fl(\{\bar2,3\}_{2n+1})\ot \sum_{j=0}^{k-2n-1}\zeta^\fm(\{2,\bar3\}_{k-2n-1-j},\{\bar2,3\}_{j})
\end{equation*}
while all possible cuts of $\ncic{3}\,${} and $\ncic{\bar3}\,${} together yield
\begin{equation*}
-\zeta^\fl(\{\bar2,3\}_{2n+1})\ot \sum_{j=0}^{k-2n-2}\zeta^\fm(\{2,\bar3\}_{k-2n-1-j},\{\bar2,3\}_{j})
\end{equation*}
by path reversal. Note that there is one extra term for $\ncic{0}\,${}
when a subsequence ends at the last 1. Therefore
$$
D_5\zeta^\fm(\{\bar2,3\}_k)=\zeta^\fl(\{\bar2,3\}_{2n+1})\ot\zeta^\fm(\{\bar2,3\}_{k-2n-1}) \in \calL_{10n+5}\ot  \calH_{5k-10n-5}
$$
by induction.

\subsubsection{$r=10n+7$}\label{subsec:2bar3caser=7}
We have the following picture when $k$ is odd.

\begin{center}
\begin{tikzpicture}[scale=0.9]
\node (A0) at (-2.75,0) {$0;$};
\node (A1) at (-2.35,0) {$\bar1,$};
\node (A2) at (-1.95,0) {$0,$};
\node (A3) at (-1.55,0) {$1,$};
\node (A13) at (-0.75,0) {$0,\ldots,$};
\node (A0) at (0.05,0) {$0,$};
\node (A1) at (0.45,0) {$1,$};
\node (A2) at (0.85,0) {$0,$};
\node (A3) at (1.25,0) {$\bar1,$};
\node (A4) at (1.65,0) {$0,$};
\node (A5) at (2.85,0) {$\{S_{\bar1},S_{1}\}_n$};
\node (A10) at (4.05,0) {$0,$};
\node (A11) at (4.45,0) {$\bar1,$};
\node (A11) at (4.85,0) {$0,$};
\node (A12) at (5.25,0) {$1,$};
\node (A10) at (5.65,0) {$0,$};
\node (A10) at (6.05,0) {$0,$};
\node (A11) at (6.45,0) {$1,$};
\node (A11) at (6.85,0) {$0,$};
\node (A12) at (7.25,0) {$\bar1,$};
\node (A13) at (8.05,0) {$0,\ldots,$};
\node (A14) at (8.95,0) {$0,$};
\node (A14) at (9.35,0) {$\bar1,$};
\node (A14) at (9.75,0) {$0,$};
\node (A15) at (10.15,0) {$1,$};
\node (A16) at (10.55,0) {$0,$};
\node (A17) at (10.95,0) {$0;$};
\node (A17) at (11.35,0) {$1\phantom{,}$};

\node (B3) at (3,0.4) {${}$};
\node (C5) at (3.8,-0.6) {${}$};
\node (C6) at (2.6,-0.4) {${}$};
\draw (0.4,0.25) to (0.4,0.4) to (B3) node {$\cic{2}$} to (5.6,0.4)  to (5.6,0.25);
\draw (1.2,-0.25) to (1.2,-0.6) to (C5) node {$\cic{3}$} to (6.4,-0.6)  to (6.4,-0.25);
\draw (0,-0.25) to (0,-0.4) to (C6) node {$\cic{1}$} to (5.2,-0.4)  to (5.2,-0.25);
\draw[dashed] (-.95,0.7) to (-.95,-.7);
\draw[dashed] (-0.15,0.7) to (-0.15,-.7);
\draw[dashed] (1.85,0.7) to (1.85,-0.7);
\draw[dashed] (3.8,0.7) to (3.8,-.7);
\draw[dashed] (5.85,0.7) to (5.85,-.7);
\draw[dashed] (7.8,0.7) to (7.8,-.7);
\draw[dashed] (8.75,0.7) to (8.75,-.7);
\draw[dashed] (10.75,0.7) to (10.75,-.7);
\node (lab) at (3.5,-1.2) {Possible cuts of $D_{10n+7}\zeta^\fm(\{\bar2,3\}_k)$};
\end{tikzpicture}
\end{center}

If a subsequence starts in a $S_1$-block
then there are only three possible nonzero terms:
$$\aligned
{\ncic{1}}=&\, I^\fl(0;10\bar10,\{S_{\bar1},S_1\}_n,0\bar10;1)\ot I^\fm(0;\dots,S_{\bar1},0,1,0,S_1,\dots;1), \\
{\ncic{2}}=&\, I^\fl(1;0\bar10,\{S_{\bar1},S_1\}_n,0\bar101;0)\ot I^\fm(0;\dots,S_{\bar1},0,1,0,S_1,\dots;1), \\
{\ncic{3}}=&\, I^\fl(\bar1;0,\{S_{\bar1},S_1\}_n,0\bar10100;1)\ot I^\fm(0;\cdots;1).
\endaligned$$
Then  $\ncic{3}=0$ since its subsequence is anti-symmetric. The quotient sequence in $\ncic{1}\,${} and $\ncic{2}\,${} are the same.
By path reversal, we see immediately that $\ncic{1}+\ncic{2}=0$.

If a subsequence starts in a $S_{\bar1}$-block, then the argument is completely similar
so that all the terms will be canceled.

In conclusion we see that  $D_{10n+7}\zeta^\fm(\{\bar2,3\}_k)=0$.

\subsubsection{$r=10n+11$}\label{subsec:2bar3caser=11}
We notice that any such cut would produce the same type of quotient sequence with $n+1$ blocks of $(S_1,S_{\bar1})$ missing.
So the right factor is unramified by induction. The left factor must have one of the following forms:
\begin{align*}
I^\fl(0; \{S_1,S_{\bar1}\}_{n+1},0;1)=I^\fl(0;\{S_{\bar1},S_1\}_{n+1},0;\bar1)= &\, \zeta_1^\fl(\{\bar2,3\}_{2n+2})\in\calL_{10n+11},\\
I^\fl(1;0,\{S_1,S_{\bar1}\}_{n+1};0)=I^\fl(\bar1;0,\{S_{\bar1},S_1\}_{n+1};0)= &\, -\zeta_1^\fl(\{\bar2,3\}_{2n+2})\in\calL_{10n+11}.
\end{align*}
by inductive assumption.

\medskip
Combining all the findings in \S\ref{subsec:2bar3caser=3}-\S\ref{subsec:2bar3caser=11} we see that
$D_1\zeta^\fm(\{\bar2,3\}_k)=0$ and for all odd $r<5k$,
$D_r\zeta^\fm(\{\bar2,3\}_k)\in\calL_r\ot \calH_{5k-r}$, i.e., $D_r$ is a stable derivation. Consequently,
$\zeta^\fm(\{\bar2,3\}_k)$ is unramified by Lemma~\ref{lem:D1} and Theorem~\ref{thm-Glanois}.

\subsection{The inductive step for $\zeta^\fm(\{\bar2,3\}_k,\bar2)$}\label{subsec:bar23bar2}
The computation of $D_r\zeta^\fm(\{\bar2,3\}_\ell,\bar2)$ can be carried out in a similar manner as that for $\zeta^\fm(\{\bar2,3\}_\ell)$. We can attach $0;\bar1$ to the right end of the picture in \S\ref{subsec:2bar3caser=3} and then exchange $\bar1$ and 1. By the same argument there, $D_{10n+r}$ are all stable for $r=3,5,7,9,11$. For brevity, we only show the computation when $r=7$ in which case the induction assumption for motivic Euler sum $\zeta^\fm(\{\bar2,3\}_\ell)$ is essentially used. In this case,
all the computation will be the same as that of $D_7\zeta^\fm(\{\bar2,3\}_\ell)$ except for the
only one extra possible cut starting at the
beginning of the last $S_{\bar1}$-block for which we have the following picture.

\begin{center}
\begin{tikzpicture}[scale=0.9]
\node (R1) at (4.35,0) {$\dots,$};
\node (RA0) at (5.05,0) {$0,$};
\node (RA1) at (5.45,0) {$1,$};
\node (RA2) at (5.85,0) {$0,$};
\node (RA3) at (6.25,0) {$\bar1,$};
\node (RA4) at (6.65,0) {$0,$};
\node (RA5) at (7.05,0) {$0,$};
\node (RD5) at (6.6,0.8) {${}$};
\node (RA6) at (7.45,0) {$\bar1,$};
\node (RA7) at (7.85,0) {$0;$};
\node (RA8) at (8.25,0) {$1\phantom{,}$};
\draw (5,0.25) to (5,0.8) to (RD5) node {$\cic{\bar1}$} to (8.2,0.8)  to (8.2,0.25);
\draw[dashed] (4.8,0.7) to (4.8,-0.7);
\draw[dashed] (6.8,0.7) to (6.8,-0.7);
\end{tikzpicture}
\end{center}
which produces
\begin{equation*}
\ncic{\text{$\bar1$}}=I^\fl(0;10\bar100\bar10;1)\ot \zeta^\fm(\{\bar2,3\}_{k-1})
=\zeta^\fl(\bar2,3,\bar2)\ot \zeta^\fm(\{\bar2,3\}_{k-1})\in\calL_7\ot \calH_{5k-5}
\end{equation*}
by the initial step and the inductive assumption.

By Theorem~\ref{thm-Glanois} we have now proved that both
families of motivic Euler sums in \eqref{equ:2bar3} are unramified when $\ell=k$.

\subsection{The inductive step for $\zeta^\fm(\{3,\bar2\}_k)$ and $\zeta^\fm(\{3,\bar2\}_k,3)$}
We now prove that the pair of families in \eqref{equ:3bar2}
are both unramified. It is mostly the same as that of  \eqref{equ:2bar3} except that in the
inductive step we need to use the validity of unramification of \eqref{equ:2bar3}.

When $r=3$ (or similarly for $r=10n+3<5k$) we can modify the picture in \ref{subsec:2bar3caser=3}
at the two ends if $k$ is odd. We just add $0;\bar10$ to the left end and remove $0,0;1$ from the right end
(no right-end change needed for $\zeta^\fm(\{3,\bar2\}_k,3)$).
So we consider the following picture for even $k>1$:

\begin{center}
\begin{tikzpicture}[scale=0.9]
\node (A2) at (0.85,0) {$0;$};
\node (A3) at (1.25,0) {$\bar1,$};
\node (A4) at (1.65,0) {$0,$};
\node (A5) at (2.05,0) {$0,$};
\node (A6) at (2.45,0) {$\bar1,$};
\node (A7) at (2.85,0) {$0,$};
\node (A8) at (3.25,0) {$1,$};
\node (A9) at (3.65,0) {$0,$};
\node (A10) at (4.05,0) {$0,$};
\node (A11) at (4.45,0) {$1,$};
\node (A11) at (4.85,0) {$0,$};
\node (A12) at (5.25,0) {$\bar1,$};
\node (A13) at (6.05,0) {$0,\ldots,$};
\node (A14) at (6.95,-0.5) {$0,$};
\node (A14) at (7.35,-0.5) {$\bar1,$};
\node (A14) at (7.75,-0.5) {$0,$};
\node (A15) at (8.15,-0.5) {$1,$};
\node (A16) at (8.55,-0.5) {$0,$};
\node (A17) at (8.95,-0.5) {$0;$};
\node (A17) at (11.5,-0.49) {$1\phantom{,}$ \qquad {for $\zeta^\fm(\{3,\bar2\}_k,3)$}};
\node (A14) at (6.95,0.5) {$0,$};
\node (A14) at (7.35,0.5) {$\bar1,$};
\node (A14) at (7.75,0.5) {$0;$};
\node (A17) at (10.7,0.52) {$1\phantom{,}$  \qquad\qquad\ \ {for $\zeta^\fm(\{3,\bar2\}_k)$}};
\node (D4) at (1.6,0.6) {${}$};
\node (C5) at (2.0,-0.4) {${}$};
\node (C6) at (2.4,-0.6) {${}$};
\node (C17) at (8.5,-0.1) {${}$};
\draw (7.7,-0.25) to (7.7,-0.1) to (C17) node {$\cic{2}$} to (9.2,-0.1)  to (9.2,-0.25);
\draw (0.8,0.25) to (0.8,0.6) to (D4) node {$\cic{\bar2}$} to (2.4,0.6)  to (2.4,0.25);
\draw (1.2,-0.25) to (1.2,-0.4) to (C5) node {$\cic{\bar3}$} to (2.8,-0.4)  to (2.8,-0.325);
\draw (1.6,-0.25) to (1.6,-0.6) to (C6) node {$\cic{\bar4}$} to (3.2,-0.6)  to (3.2,-0.25);
\draw[dashed] (1.8,0.7) to (1.8,-0.7);
\draw[dashed] (3.8,0.7) to (3.8,-0.7);
\draw[dashed] (5.8,0.7) to (5.8,-0.7);
\draw[dashed] (6.7,0.7) to (6.7,-0.7);
\draw[dashed] (8.7,0.1) to (8.7,-0.7);
\end{tikzpicture}
\end{center}
Similar to what we have shown previously we see easily that  $\ncic{\bar2}+\ncic{\bar3}=0$. But $\ncic{\bar4}\,${}
from the first block cannot be canceled by the missing $\ncic{\bar1}${} in front of it. All the other cuts in $D_3\zeta^\fm(\{3,\bar2\}_k)$
will either cancel each other or vanish themselves (see \S\ref{subsec:2bar3caser=3}).
For general $D_{10n+3}$ the argument is the same and therefore
$$
D_{10n+3}\zeta^\fm(\{3,\bar2\}_k)
    =\zeta_1^\fl(\{\bar2,3\}_{2n},\bar2)\ot \zeta^\fm(\{\bar2,3\}_{k-2n-1},\bar2)
    \in \calL_{10n+3}\ot \calH_{5k-10n-3}
$$
by inductive assumption. Similarly, we see that
$$
D_{10n+3}\zeta^\fm(\{3,\bar2\}_k,3)=\zeta_1^\fl(\{\bar2,3\}_{2n},\bar2)\ot \zeta^\fm(\{\bar2,3\}_{k-2n})
    +\zeta^\fl(\{3,\bar2\}_{2n},3)\ot \zeta^\fm(\{3,\bar2\}_{k-2n})
$$
also lies in $\calL_{10n+3}\ot \calH_{5k-10n}$ by induction,
where the second term comes from the cut  $\ncic{2}\,${} ending at the last 1.

Applying the same idea we find that all of  $D_{10n+5}$, $D_{10n+7}$ and $D_{10n+11}$ are stable derivations.
By Theorem~\ref{thm-Glanois} we know $\zeta^\fm(\{3,\bar2\}_k)$ and $\zeta^\fm(\{3,\bar2\}_k,3)$ are both unramified.
We leave the details to the interested reader.

\subsection{The inductive step for $\zeta_1^\fm(\{3,\bar2\}_k)$ and $\zeta_1^\fm(\{3,\bar2\}_k,3)$}
We first consider the case $r=10n+9<5k$. There is only one cut in $D_r$ that is nonzero.

\begin{center}
\begin{tikzpicture}[scale=0.9]
\node (A1) at (-1.55,0) {$0;$};
\node (A2) at (-1.15,0) {$0,$};
\node (A3) at (-0.75,0) {$\bar1,$};
\node (A4) at (-0.35,0) {$0,$};
\node (A5) at (0.05,0) {$0,$};
\node (A6) at (0.45,0) {$\bar1,$};
\node (A7) at (0.85,0) {$0,$};
\node (A8) at (1.25,0) {$1,$};
\node (A9) at (1.65,0) {$0,$};
\node (A9) at (2.8,0.04) {$\{S_1,S_{\bar1}\}_n,$};
\node (A10) at (4.05,0) {$0,$};
\node (A11) at (4.45,0) {$1,$};
\node (A11) at (4.85,0) {$0,$};
\node (A12) at (5.25,0) {$\bar1,$};
\node (A13) at (6.05,0) {$0,\ldots,$};
\node (A14) at (6.95,-0.5) {$0,$};
\node (A14) at (7.35,-0.5) {$\bar1,$};
\node (A14) at (7.75,-0.5) {$0,$};
\node (A15) at (8.15,-0.5) {$1,$};
\node (A16) at (8.55,-0.5) {$0,$};
\node (A17) at (8.95,-0.5) {$0;$};
\node (A17) at (11.5,-0.46) {$1\phantom{,}$ \qquad {for $\zeta_1^\fm(\{3,\bar2\}_k,3)$}};
\node (A14) at (6.95,0.5) {$0,$};
\node (A14) at (7.35,0.5) {$\bar1,$};
\node (A14) at (7.75,0.5) {$0;$};
\node (A17) at (10.7,0.52) {$1\phantom{,}$  \qquad\qquad\ \ {for $\zeta_1^\fm(\{3,\bar2\}_k)$}};

\node (C5) at (1.6,-0.4) {${}$};
\node (D4) at (1.4,0.4) {${}$};
\draw (-1.6,0.25) to (-1.6,0.4) to (D4) node {$\cic{9}$} to (4.4,0.4)  to (4.4,0.25);
\draw (-1.6,-0.25) to (-1.6,-0.4) to (C5) node {$\ccic{1\!1}$} to (5.2,-0.4)  to (5.2,-0.325);
\draw[dashed] (-0.2,0.7) to (-0.2,-0.7);
\draw[dashed] (1.8,0.7) to (1.8,-0.7);
\draw[dashed] (3.8,0.7) to (3.8,-0.7);
\draw[dashed] (5.8,0.7) to (5.8,-0.7);
\draw[dashed] (6.7,0.7) to (6.7,-0.7);
\draw[dashed] (8.7,0.0) to (8.7,-0.7);
\end{tikzpicture}
\end{center}
This occurs when the subsequence starts at the very beginning which produces the terms
\begin{align*}
D_r\zeta_1^\fm(\{3,\bar2\}_k)=\ncic{9}=&\, \zeta_1(\{3,\bar2\}_{2n+1},3)\ot \zeta^\fm(\{\bar2,3\}_{k-2-2n},\bar2)\in \calL_{10n+9}\ot \calH_{5k-10n-8}, \\
D_r\zeta_1^\fm(\{3,\bar2\}_k,3)=\ncic{9}=&\, \zeta_1(\{3,\bar2\}_{2n+1},3)\ot \zeta^\fm(\{\bar2,3\}_{k-1-2n})\in \calL_{10n+9}\ot \calH_{5k-10n-5}
\end{align*}
by induction and the proceeding section. Similarly, if $r=10n+11<5k$ then the only nontrivial cut in $D_r$
occurs when the subsequence starts at the very beginning, which produces the terms
\begin{align*}
D_r\zeta_1^\fm(\{3,\bar2\}_k)=\nncic{11}=&\, \zeta_1(\{3,\bar2\}_{2n+2})\ot \zeta^\fm(\{3,\bar2\}_{k-2-2n})\in \calL_{10n+11}\ot \calH_{5k-10n-10},\\
D_r\zeta_1^\fm(\{3,\bar2\}_k,3)=\nncic{11}=&\, \zeta_1(\{3,\bar2\}_{2n+2})\ot \zeta^\fm(\{3,\bar2\}_{k-2-2n},3)\in \calL_{10n+11}\ot \calH_{5k-10n-7}
\end{align*}
by induction and the proceeding section.
For all other odd $r$'s the proof is essentially the same as the starting $0$ does not contribute to $D_r$.
By Theorem~\ref{thm-Glanois} we see that $\zeta_1^\fm(\{3,\bar2\}_k)$ is unramified.

\subsection{The inductive step for $\zeta_1^\fm(\{\bar2,3\}_k)$ and $\zeta_1^\fm(\{\bar2,3\}_k,\bar2)$}
We now consider the pair of families of motivic Euler sums in \eqref{equ:1-2bar3}.
We first consider the case $r=10n+3$. Adding one 0 to the left of the picture in \S\ref{subsec:2bar3caser=3}
we see that there is only one extra cut in $D_r$ compared to the \eqref{equ:3bar2} case
when the subsequence starts at the very beginning. This yields the terms
\begin{equation*}
\ncic{\bar4}=\zeta_1^\fl(\{\bar2,3\}_{2n},\bar2)\ot \zeta^\fm(\{3,\bar2\}_{k-1-2n},3)
\quad\text{or} \quad
\zeta_1^\fl(\{\bar2,3\}_{2n},\bar2)\ot \zeta^\fm(\{3,\bar2\}_{k-2n})
\end{equation*}
By induction and the proceeding section (if $n=0$) we see that all the factors are unramified.
Similarly, if $r=10n+11$ then the only nontrivial cut in $D_r$
occurs when the subsequence starts at the very beginning which produces the terms
\begin{equation*}
\zeta_1^\fl(\{\bar2,3\}_{2n+2})\ot \zeta^\fm(\{\bar2,3\}_{k-2-2n})
 \quad\text{or} \quad
\zeta_1^\fl(\{\bar2,3\}_{2n+2})\ot \zeta^\fm(\{\bar2,3\}_{k-2-2n},\bar2).
\end{equation*}
Again, all the factors are unramified by induction.
For other cuts when $r=10n+3$ as well as for other odd $r$'s the proof is essentially
the same as that for $\zeta^\fm(\{\bar2,3\}_k)$ and $\zeta^\fm(\{\bar2,3\}_k,\bar2)$
since the starting $0$ does not contribute to $D_r$.
By Theorem~\ref{thm-Glanois} we see that both $\zeta_1^\fm(\{\bar2,3\}_k)$ and $\zeta_1^\fm(\{\bar2,3\}_k,\bar2)$ are unramified.

We have now completed the proof of Lemma~\ref{lem:2bar3Motivic}.

\section{Proof of Theorem~\ref{thm:2bar3Motivic}} \label{sec:proofOfThm:2bar3Motivic}
We first handle another set of base cases for arbitrary $a$.
\begin{lem}
For any integer $a\ge 0$ the MES
\begin{equation*}
\zeta_a^\fm(\bar2), \quad \zeta_a^\fm(\bar2,3), \quad \zeta_a^\fm(\bar2,3,\bar2), \quad \zeta_a^\fm(3,\bar2), \quad \zeta_a^\fm(3,\bar2,3)
\end{equation*}
are all unramified.
\end{lem}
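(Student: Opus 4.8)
The plan is to invoke the descent criterion, Theorem~\ref{thm-Glanois}. Since none of the five strings contains $\bar1$, Lemma~\ref{lem:D1} gives $D_1\zeta_a^\fm(\bfs)=0$ for free, so it remains to verify $D_r\zeta_a^\fm(\bfs)\in\calL_r\ot\calH_{k-r}$ for every odd $r<k:=a+|\bfs|$. The string $\bfs=\bar2$ is immediate: by the shuffle regularization (I3) one has $\zeta_a^\fm(\bar2)=(-1)^a(a+1)\,\zeta^\fm(\overline{a+2})$, which is unramified by Proposition~\ref{prop-singleES}. For the four strings of length $\ge2$ I would argue by induction on $a$, the base cases $a=0$ and $a=1$ being supplied by Lemma~\ref{lem:2bar3Motivic}; inside a fixed $a$ I would first settle $\zeta_a^\fm(\bar2,3)$ and $\zeta_a^\fm(3,\bar2)$ and only then $\zeta_a^\fm(\bar2,3,\bar2)$ and $\zeta_a^\fm(3,\bar2,3)$, since the computations for the latter two call on the former two at the same value of $a$.

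Writing $\zeta_a^\fm(\bfs)=I^\fm(0;0_a,\eta_1,0_{s_1-1},\dots,\eta_d,0_{s_d-1};1)$, I would sort the cuts contributing to $D_r\zeta_a^\fm(\bfs)$ by whether the chosen subsequence lies entirely in the tail $\eta_1,0_{s_1-1},\dots,\eta_d,0_{s_d-1}$ (that is, starts at position $p\ge a$) or overlaps the leading block $0_a$ (that is, $p<a$). A cut of the first kind matches one in $D_r\zeta^\fm(\bfs)$, whose analysis is exactly the $\ell=1$ base case of Lemma~\ref{lem:2bar3Motivic}: the left factor is unchanged --- one of $\zeta^\fl(3)$, $\zeta_1^\fl(\bar2)$, $\zeta^\fl(\bar2,3)$, $\zeta^\fl(3,\bar2)$ (and similar ones of the same weight), all lying in $\calL_r$ because the corresponding Euler sums are unramified by Lemma~\ref{lem:2bar3Motivic}, or else $0$ by an anti-symmetry of the subsequence --- while the quotient sequence merely acquires the prefix $0_a$. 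Hence the right factor is either $0$ (this is what becomes of every quotient that was the empty word $I^\fm(0;\emptyset;1)=1$, now $I^\fm(0;0_a;1)=0$ by (I4)) or $\zeta_a^\fm(\bfs')$ for a proper sub-composition $\bfs'$ of $\bfs$, which is again one of our strings (or a single $\bar m$) and hence unramified by the inductive ordering. Grouping these cuts by right factor reproduces verbatim the left-factor cancellations of the $a=0$ case, so they contribute to $\calL_r\ot\calH_{k-r}$.

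For a cut of the second kind ($p<a$) the left endpoint $a_p$ equals $0$. If the subsequence consists only of zeros its left factor is $I^\fl(0;0_r;\pm1)=I^\fl(0;0_r;1)=0$ by homothety (I6) and (I4); if its right endpoint $a_{p+r+1}$ is $0$ the left factor has both ends equal and vanishes by (I2). The only surviving case is that the subsequence runs from inside $0_a$ up to the zero immediately preceding some $\eta_i$ with $i\ge2$; applying the homothety $\alpha=\eta_i$ turns such a cut into
\[
\zeta_{a-p}^\fl(s_1,\dots,s_{i-1})\ot\zeta_{p}^\fm(s_i,\dots,s_d),
\]
carrying the original signs, with $0\le p<a$. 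For each of the four strings the prefix $(s_1,\dots,s_{i-1})$ and the suffix $(s_i,\dots,s_d)$ both lie in $\{(3),(\bar2),(\bar2,3),(3,\bar2)\}$, so each factor is unramified by Proposition~\ref{prop-singleES} or by the inductive ordering, and the cut lands in $\calL_r\ot\calH_{k-r}$. Summing over all cuts and applying Theorem~\ref{thm-Glanois} completes the argument.

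The substance of the proof is organizational rather than conceptual: the main work is the exhaustive enumeration of the second-kind cuts (the first-kind ones being imported wholesale from Lemma~\ref{lem:2bar3Motivic}), together with the routine check that prepending $0_a$ to a quotient sequence --- which sends each $\zeta^\fm(\bfs')$ to $\zeta_a^\fm(\bfs')$ and the empty word to $0$ --- preserves every cancellation used in the $a=0$ computations. I also use repeatedly the elementary fact, the $d=1$ case of Proposition~\ref{prop-singleES}, that $\zeta^\fl(\overline m)=c\,\zeta^\fl(m)\in\calL_m$ for every $m\ge2$.
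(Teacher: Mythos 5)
Your proposal is correct and follows exactly the route the paper intends: the paper's own proof of this lemma is only the remark that it is ``a standard application of Theorem~\ref{thm-Glanois} using the stable derivations $D_r$,'' and your argument supplies precisely those details, splitting the cuts into those supported in the tail (imported verbatim from the $a=0,1$ analysis of Lemma~\ref{lem:2bar3Motivic}) and those meeting the leading block $0_a$, just as the paper does in its Section~4 induction. Your explicit identifications, e.g. $\zeta_a^\fm(\bar2)=(-1)^a(a+1)\zeta^\fm(\overline{a+2})$ via (I3) and the surviving second-kind cuts $\zeta_{a-p}^\fl(s_1,\dots,s_{i-1})\ot\zeta_p^\fm(s_i,\dots,s_d)$ after homothety by $\eta_i$, check out, so the proposal is a faithful filling-in of the proof the paper leaves to the reader.
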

\begin{proof}
The proof is again a standard application of Theorem~\ref{thm-Glanois} using the stable derivations $D_r$.
We leave the details to the interested reader.
\end{proof}

We now use induction to prove Theorem~\ref{thm:2bar3Motivic}. Assume $a\ge 2$, $\ell\ge 2$ and Theorem~\ref{thm:2bar3Motivic} holds if $a$ or $k$ or both are replaced by any smaller nonnegative integers.

For any odd integer $r\ge 3$ if a cut of $D_r$ starting after the first nonzero $\pm 1$ is not canceled by another cut nor does it vanish by itself then by the proof of Lemma~\ref{lem:2bar3Motivic} the left factor must have the form treated in Lemma~\ref{lem:2bar3Motivic} and the right factor one of the forms in Theorem~\ref{thm:2bar3Motivic} but with smaller $\ell$. By induction assumption $D_r$ must be stable.

If a cut starts somewhere within the first block of $a+1$ 0's then its left factor must have one of the forms in Theorem~\ref{thm:2bar3Motivic} but with $\ell<k$ and its right factor must have the form treated in Lemma~\ref{lem:2bar3Motivic}. Thus $D_r$ must be stable by induction assumption again. This completes the proof of the theorem.

\section{Proof of \eqref{equ:bar21bar2Motivic} and \eqref{equ:bar21Motivic} in Theorem \ref{thm:bar21bar2Motivic}}
Assuming Conjecture~\ref{conj:bar21bar2} holds we now show that for all $\ell\in\N$,  
\begin{align}
2^{3\ell+1}\zeta^\fm(\{\bar2,1\}_\ell,\bar2)=&\, 
        -\sum_{\ga+\gb=\ell} (-1)^\ga \zeta^\fm(3_{\ga},2,3_{\gb}),\label{equ:bar21bar2MotivicN}\\
2^{3\ell}\zeta^\fm(\{\bar2,1\}_\ell)=&\,
        \zeta^\fm(3_\ell)-2\sum_{\ga+\gb=\ell-1} (-1)^{\ga} \zeta_1^\fm(3_{\ga},2,3_{\gb}), \label{equ:bar21MotivicN}
\end{align}
by induction on $\ell$.

\subsection{Proof of base cases of \eqref{equ:bar21bar2MotivicN} and \eqref{equ:bar21MotivicN}}
If $\ell=1$ then we see easily that $D_1=0$ for all motivic Euler sums involved by Lemma~\ref{lem:D1}.
Then by the regularized stuffle relation of Euler sums
\begin{equation*}
\zeta_*(\bar2,1)=-\zeta(1,\bar2)-\zeta(\bar3)=-\frac18\zeta(3)+\frac34\zeta(3)=\frac58\zeta(3)
\end{equation*}
using Prop.~\ref{prop-singleES} and \eqref{equ:1bar2Motivic}. Thus \eqref{equ:bar21MotivicN}
follows from Theorem~\ref{thm-Glanois} when $\ell=1$:
\begin{equation}\label{equ:ell=1bar21MotivicN}
8\zeta^\fm(\bar2,1)=\zeta^\fm(3)-2\zeta_1^\fm(2)=5\zeta^\fm(3).
\end{equation}
Using the pictures \eqref{fig:bar21bar2Base}
\begin{equation}\label{fig:bar21bar2Base}
\text{
\begin{tikzpicture}[scale=0.9]
\node (RA0) at (5.05,0) {$0;$};
\node (RA1) at (5.45,0) {$1,$};
\node (RA3) at (5.85,0) {$0,$};
\node (RA3) at (6.25,0) {$\bar1,$};
\node (RB3) at (6.2,0.4) {${}$};
\node (RA4) at (6.65,0) {$\bar1,$};
\node (RD4) at (6.6,0.8) {${}$};
\node (RA5) at (7.05,0) {$0;$};
\node (RA6) at (7.45,0) {$1\phantom{,}$};
\node (RC5) at (5.8,-0.5) {${}$};
\draw (5.4,0.25) to (5.4,0.4) to (RB3) node {$\cic{2}$} to (7.0,0.4)  to (7.0,0.25);
\draw (5.8,0.25) to (5.8,0.8) to (RD4) node {$\cic{3}$} to (7.4,0.8)  to (7.4,0.25);
\draw (5,-0.25) to (5,-0.5) to (RC5) node {$\cic{1}$} to (6.6,-0.5)  to (6.6,-0.325);
\node (label) at (6.25,-1.2) {$D_3 \zeta^\fm(\bar2,1,\bar2)$};
\end{tikzpicture}
\
\begin{tikzpicture}[scale=0.9]
\node (RA0) at (5.05,0) {$0;$};
\node (RA1) at (5.45,0) {$1,$};
\node (RA3) at (5.85,0) {$0,$};
\node (RA3) at (6.25,0) {$0,$};
\node (RB3) at (6.2,0.4) {${}$};
\node (RA4) at (6.65,0) {$1,$};
\node (RD4) at (6.6,0.8) {${}$};
\node (RA5) at (7.05,0) {$0;$};
\node (RA6) at (7.45,0) {$1\phantom{,}$};
\node (RC5) at (5.8,-0.5) {${}$};
\draw (5.4,0.25) to (5.4,0.4) to (RB3) node {$\cic{2}$} to (7.0,0.4)  to (7.0,0.25);
\draw (5.8,0.25) to (5.8,0.8) to (RD4) node {$\cic{3}$} to (7.4,0.8)  to (7.4,0.25);
\draw (5,-0.25) to (5,-0.5) to (RC5) node {$\cic{1}$} to (6.6,-0.5)  to (6.6,-0.325);
\node (label) at (6.25,-1.2) {$D_3 \zeta^\fm(3,2)$};
\end{tikzpicture}
\
\begin{tikzpicture}[scale=0.9]
\node (RA0) at (5.05,0) {$0;$};
\node (RA1) at (5.45,0) {$1,$};
\node (RA3) at (5.85,0) {$0,$};
\node (RA3) at (6.25,0) {$1,$};
\node (RB3) at (6.2,0.4) {${}$};
\node (RA4) at (6.65,0) {$0,$};
\node (RD4) at (6.6,0.8) {${}$};
\node (RA5) at (7.05,0) {$0;$};
\node (RA6) at (7.45,0) {$1\phantom{,}$};
\node (RC5) at (5.8,-0.5) {${}$};
\draw (5.4,0.25) to (5.4,0.4) to (RB3) node {$\cic{2}$} to (7.0,0.4)  to (7.0,0.25);
\draw (5.8,0.25) to (5.8,0.8) to (RD4) node {$\cic{3}$} to (7.4,0.8)  to (7.4,0.25);
\draw (5,-0.25) to (5,-0.5) to (RC5) node {$\cic{1}$} to (6.6,-0.5)  to (6.6,-0.325);
\node (label) at (6.25,-1.2) {$D_3 \zeta^\fm(2,3)$};
\end{tikzpicture}
\
\begin{tikzpicture}[scale=0.9]
\node (RA1) at (5.45,0) {$0,$};
\node (RA3) at (5.85,0) {$\bar1,$};
\node (RA3) at (6.25,0) {$\bar1,$};
\node (RB3) at (6.2,-0.4) {${}$};
\node (RA4) at (6.65,0) {$0,$};
\node (RD4) at (6.6,0.4) {${}$};
\node (RA5) at (7.05,0) {$1;$};
\node (RA6) at (7.45,0) {$1\phantom{,}$};
\draw (5.8,0.25) to (5.8,0.4) to (RD4) node {$\cic{2}$} to (7.4,0.4)  to (7.4,0.25);
\draw (5.4,-0.25) to (5.4,-0.4) to (RB3) node {$\cic{1}$} to (7.0,-0.4)  to (7.0,-0.25);
\node (label) at (6.45,-1.2) {$D_3 \zeta^\fm(1,\bar2,1)$};
\end{tikzpicture}
\
\begin{tikzpicture}[scale=0.9]
\node (RA1) at (5.45,0) {$0,$};
\node (RA3) at (5.85,0) {$1,$};
\node (RA3) at (6.25,0) {$1,$};
\node (RB3) at (6.2,-0.4) {${}$};
\node (RA4) at (6.65,0) {$0,$};
\node (RD4) at (6.6,0.4) {${}$};
\node (RA5) at (7.05,0) {$0;$};
\node (RA6) at (7.45,0) {$1\phantom{,}$};
\draw (5.8,0.25) to (5.8,0.4) to (RD4) node {$\cic{2}$} to (7.4,0.4)  to (7.4,0.25);
\draw (5.4,-0.25) to (5.4,-0.4) to (RB3) node {$\cic{1}$} to (7.0,-0.4)  to (7.0,-0.25);
\node (label) at (6.45,-1.2) {$D_3 \zeta^\fm(1,3)$};
\end{tikzpicture} }
\end{equation}
together with the fact that $\zeta^\fm(\bar2)=-\zeta^\fm(2)/2$ by Prop. \ref{prop-singleES}
and \eqref{equ:ell=1bar21MotivicN}, we have
\begin{align*}
D_3 \zeta^\fm(\bar2,1,\bar2)=&\, I^\fl(0;10\bar1;\bar1)\ot I^\fm(0;\bar10;1)
    =-\frac12\zeta^\fl(\bar2,1)\ot\zeta^\fm(2)=-\frac5{16} \zeta^\fl(3)\ot\zeta^\fm(2),\\
D_3 \zeta^\fm(3,2)=&\, I^\fl(0;010;1)\ot I^\fm(0;10;1)=\zeta_1^\fl(2)\ot\zeta^\fm(2)=-2\zeta^\fl(3)\ot\zeta^\fm(2),\\
D_3 \zeta^\fm(2,3)=&\, \big(I^\fl(1;010;0)+I^\fl(0;100;1)\big)\ot I^\fm(0;10;1)=3\zeta^\fl(3)\ot\zeta^\fm(2).
\end{align*}
Hence
\begin{equation*}
   16 D_3 \zeta^\fm(\bar2,1,\bar2)=D_3\big( \zeta^\fm(3,2)- \zeta^\fm(2,3) \big).
\end{equation*}
For \eqref{equ:12bar1MotivicN}  it is easy to see from the last two pictures in \eqref{fig:bar21bar2Base} that
$$
D_r \zeta^\fm(1,\bar2,1)=D_r \zeta^\fm(1,3)=0 \quad \text{for } r=1,3.
$$
Hence, the $\ell=1$ case of  \eqref{equ:bar21bar2MotivicN} and \eqref{equ:bar21MotivicN} follows from Theorem~\ref{thm-Glanois}.

We now assume \eqref{equ:bar21bar2MotivicN} and \eqref{equ:bar21MotivicN} holds for all positive integers $\ell<k$.

\subsection{Inductive proof of \eqref{equ:bar21bar2MotivicN}}
We first show that
\begin{equation}\label{equ:D_rbar21bar2}
2^{3\ell+1}D_r \zeta^\fm(\{\bar2,1\}_k,\bar2)= -\sum_{\ga+\gb=k} (-1)^\ga D_r \zeta^\fm(3_{\ga},2,3_{\gb})
\end{equation}
for all positive integers $r=6n+3, 6n+5, 6n+7<3k+2$.

\subsubsection{$r=6n+3$} \label{sec:D_6n+3-3a23b}
For $r=6n+3$ we have the following picture when $k$ is even (the case of odd $k$ can be dealt with similarly).
\begin{equation}\label{fig:D_6n+3-bar21bar2}
\text{
\begin{tikzpicture}[scale=0.9]
\node (A1) at (-3.8,0) {$0;$};
\node (A12) at (-3.4,0) {$\bar1,$};
\node (A13) at (-3,0) {$0,$};
\node (A4) at (-2.6,0) {$1,$};
\node (A2) at (-2.2,0) {$1,$};
\node (A3) at (-1.8,0) {$0,$};
\draw[dashed] (-1.6,0.7) to (-1.6,-0.7);
\node (A4) at (-1.4,0) {$\bar1,$};
\node (A5) at (-1,0) {$\bar1,$};
\node (A6) at (-0.6,0) {$0,$};
\draw[dashed] (-0.2,0.7) to (-0.2,-0.7);
\node (A7) at (0.2,0) {$\cdots\!,$};
\draw[dashed] (0.6,0.7) to (0.6,-0.7);
\node (A8) at (0.85,0) {$1,$};
\node (A8) at (1.25,0) {$1,$};
\node (A9) at (1.65,0) {$0,$};
\node (A9) at (2.8,0.04) {$\{\bar1\bar10110\}_n$};
\node (A10) at (4.05,0) {$\bar1,$};
\node (A11) at (4.45,0) {$\bar1,$};
\node (A11) at (4.85,0) {$0,$};
\node (A12) at (5.25,0) {$1,$};
\node (A13) at (5.65,0) {$1,$};
\node (A13) at (6.45,0) {$0,\cdots\!,$};
\node (A14) at (7.35,0) {$\bar1,$};
\node (A14) at (7.75,0) {$\bar1,$};
\node (A14) at (8.15,0) {$0;$};
\node (A15) at (8.55,0) {$1\phantom{,}$};
\node (C1) at (2.6,-0.4) {${}$};
\node (D4) at (3,0.6) {${}$};
\node (D3) at (3.4,-0.6) {${}$};
\draw (0.8,-0.25) to (0.8,-0.4) to (C1) node {$\cic{1}$} to (4.4,-0.4)  to (4.4,-0.325);
\draw (1.2,0.25) to (1.2,0.6) to (D4) node {$\cic{2}$} to (4.8,0.6)  to (4.8,0.25);
\draw (1.6,-0.25) to (1.6,-0.6) to (D3) node {$\cic{3}$} to (5.2,-0.6)  to (5.2,-0.25);
\draw[dashed] (1.8,0.7) to (1.8,-0.7);
\draw[dashed] (3.8,0.7) to (3.8,-0.7);
\draw[dashed] (6.2,0.7) to (6.2,-0.7);
\draw[dashed] (5,0.7) to (5,-0.7);
\draw[dashed] (7.1,0.7) to (7.1,-0.7);
\draw[dashed] (8.3,0.7) to (8.3,-0.7);
\node (C1) at (2.5,-1.2) {Possible cuts of $D_{6n+3}\zeta^\fm(\{\bar2,1\}_k,\bar2)$};
\end{tikzpicture}}
\end{equation}
Note that in general $\ncic{2}+\ncic{3}=0$ and $\ncic{1}\,${} is antisymmetric
so that they all disappear, except for the following special case that cannot be canceled:
a type  $\ncic{1}\,${} cut starts at the initial $0$. Thus
\begin{align}
D_{6n+3}\zeta^\fm(\{\bar2,1\}_k,\bar2)
=&\,  I^\fl(0;10\{\bar1\bar10110\}_n\bar1;\bar1)\ot I^\fm(0;10\bar1\bar10\{110\bar1\bar10\}_{k/2-n-1};1)  \notag\\
=&\, \zeta^\fl(\{\bar2,1\}_{2n+1}) \ot \zeta^\fm(\{\bar2,1\}_{k-2n-1},\bar2). \label{equ:inductUsebar21bar2}
\end{align}

We note that there are two kinds of cuts for $D_r \zeta^\fm(3_{\ga},2,3_{\gb})$:
(i) those that don't involve the component $\rho(2)$ and (ii) those that do.

\begin{equation}\label{fig:D_6n+3-3a23bType(i)}
\text{
\begin{tikzpicture}[scale=0.9]
\node (A13) at (-3,0) {$0;$};
\node (A4) at (-2.6,0) {$1,$};
\node (A2) at (-2.2,0) {$0,$};
\node (A3) at (-1.8,0) {$0,$};
\draw[dashed] (-1.6,0.7) to (-1.6,-0.7);
\node (A4) at (-1.4,0) {$1,$};
\node (A5) at (-1,0) {$0,$};
\node (A6) at (-0.6,0) {$0,$};
\draw[dashed] (-0.3,0.7) to (-0.3,-0.7);
\node (A7) at (0.1,0) {$\cdots\!,$};
\draw[dashed] (0.5,0.7) to (0.5,-0.7);
\node (A8) at (0.7,0) {$1,$};
\node (A8) at (1.1,0) {$0,$};
\node (A9) at (1.5,0) {$0,$};
\node (A9) at (2.7,0.04) {$\{100100\}_n$};
\node (A10) at (4.05,0) {$1,$};
\node (A11) at (4.45,0) {$0,$};
\node (A11) at (4.85,0) {$0,$};
\node (A12) at (5.25,0) {$1,$};
\node (A13) at (5.65,0) {$0,$};
\node (A13) at (6.45,0) {$0,\cdots\!,$};
\node (A14) at (7.35,0) {$1,$};
\node (A14) at (7.75,0) {$0,$};
\node (A14) at (8.15,0) {$0;$};
\node (A15) at (8.55,0) {$1\phantom{,}$};
\node (C1) at (3.6,0.4) {${}$};
\node (D2) at (3.2,-0.4) {${}$};
\draw (2,0.25) to (2,0.4) to (C1) node {$\cic{2}$} to (5.6,0.4)  to (5.6,0.25);
\draw (1.5,-0.25) to (1.5,-0.4) to (D2) node {$\cic{1}$} to (5.2,-0.4)  to (5.2,-0.25);
\draw[dashed] (1.7,0.7) to (1.7,-0.7);
\draw[dashed] (3.8,0.7) to (3.8,-0.7);
\draw[dashed] (6.25,0.7) to (6.25,-0.7);
\draw[dashed] (5.05,0.7) to (5.05,-0.7);
\draw[dashed] (7.1,0.7) to (7.1,-0.7);
\draw[dashed] (8.35,0.7) to (8.35,-0.7);
\node (C1) at (2.5,-1.2) {Type (i) cuts of $D_{6n+3}\zeta^\fm(3_{\ga},2,3_{\gb})$};
\end{tikzpicture}}
\end{equation}
Consider the picture above for cuts of type (i). Then $\ncic{1}\,${} is always canceled by $\ncic{2}\,${}
except when $\ncic{1}\,${} ends at the last 1. Type (i) has only one nontrivial contribution
\begin{equation}\label{equ:type(i)Contr}
\zeta^\fl(3_{2n+1})\ot \zeta^\fm(3_{\ga-2n-1},2,3_{\gb}).
\end{equation}

For type (ii) cuts shown below

\begin{equation}\label{fig:D_6n+3-3a23b}
\text{
\begin{tikzpicture}[scale=0.9]
\node (A13) at (-3,0) {$0;$};
\node (A4) at (-2.6,0) {$1,$};
\node (A2) at (-2.2,0) {$0,$};
\node (A3) at (-1.8,0) {$0,$};
\draw[dashed] (-1.6,0.7) to (-1.6,-0.7);
\node (A4) at (-1.4,0) {$1,$};
\node (A5) at (-1,0) {$0,$};
\node (A6) at (-0.6,0) {$0,$};
\draw[dashed] (-0.2,0.7) to (-0.2,-0.7);
\node (A7) at (0.2,0) {$\cdots\!,$};
\draw[dashed] (0.6,0.7) to (0.6,-0.7);
\node (A8) at (0.85,0) {$1,$};
\node (A8) at (1.25,0) {$0,$};
\node (A9) at (1.65,0) {$0,$};
\node (A9) at (2.8,0.04) {$\{1,0,0\}_i,$};
\node (A10) at (4.05,0) {$1,$};
\node (A11) at (4.45,0) {$0,$};
\node (A9) at (5.65,0.04) {$\{1,0,0\}_j,$};
\node (A12) at (6.85,0) {$1,$};
\node (A13) at (7.25,0) {$0,$};
\node (A13) at (8.05,0) {$0,\cdots\!,$};
\node (A14) at (8.85,0) {$1,$};
\node (A14) at (9.25,0) {$0,$};
\node (A14) at (9.65,0) {$0;$};
\node (A15) at (10.05,0) {$1\phantom{,}$};
\node (C1) at (2.9,-0.4) {${}$};
\node (D2) at (3.5,0.4) {${}$};
\node (D3) at (2.8,0.6) {${}$};
\node (D4) at (5.0,-0.6) {${}$};
\node (D5) at (5.6,0.6) {${}$};
\draw (0.8,-0.25) to (0.8,-0.4) to (C1) node {$\cic{1}$} to (6.0,-0.4)  to (6.0,-0.325);
\draw (1.2,0.25) to (1.2,0.4) to (D2) node {$\cic{2}$} to (6.8,0.4)  to (6.8,0.25);
\draw (4,-0.25) to (4,-0.6) to (D4) node {$\cic{3}$} to (6.0,-0.6)  to (6.0,-0.25);
\draw (4.4,0.25) to (4.4,0.6) to (D5) node {$\cic{4}$} to (6.8,0.6)  to (6.8,0.25);
\draw[dashed] (1.8,0.7) to (1.8,-0.7);
\draw[dashed] (3.8,0.7) to (3.8,-0.7);
\draw[dashed] (4.6,0.7) to (4.6,-0.7);
\draw[dashed] (6.6,0.7) to (6.6,-0.7);
\draw[dashed] (7.9,0.7) to (7.9,-0.7);
\draw[dashed] (8.7,0.7) to (8.7,-0.7);
\draw[dashed] (9.84,0.7) to (9.84,-0.7);
\node (C1) at (3.5,-1.2) {Type (ii) cuts $D_{6n+3}\zeta^\fm(3_{\ga},2,3_{\gb}), i+j=2n$, starting before $\rho(2)$};
\end{tikzpicture}}
\end{equation}
we see that $\ncic{1}\,${} and $\ncic{2}\,${} contribute to
\begin{equation*}
    \sum_{i+j=2n} \big(\zeta_1^\fl(3_i,2,3_j)-\zeta_1^\fl(3_{j-1},2,3_{i+1}) \big)
 \ot \zeta^\fm(3_{\ga-i-1},2,3_{\gb-j}).
\end{equation*}
When $j=0$ the second term inside the left factor means
$$
-\zeta_1^\fl(3_{-1},2,3_{2n+1}) \ot \zeta^\fm(3_{\ga-2n-1},2,3_{\gb}):=-\zeta^\fl(3_{2n+1})\ot \zeta^\fm(3_{\ga-2n-1},2,3_{\gb}),
$$
which is canceled by \eqref{equ:type(i)Contr}.
There are two more special cases when a type (ii) cut starts in $\rho(2)$ (consider
$j=2n+1$ in \eqref{fig:D_6n+3-3a23b})
\begin{equation*}
    \ncic{3}+ \ncic{4}= \big(\zeta^\fl(3_{2n+1})-\zeta_1^\fl(3_{2n},2) \big)
 \ot \zeta^\fm(3_{\ga},2,3_{\gb-2n-1}).
\end{equation*}
Putting all the above together we obtain
\begin{align*}
D_{6n+3}\zeta^\fm(3_{\ga},2,3_{\gb})=&\, \sum_{\substack{i+j=2n\\ a-1\ge i\ge 0, b\ge j\ge 1}} \big(\zeta_1^\fl(3_i,2,3_j)-\zeta_1^\fl(3_{j-1},2,3_{i+1}) \big)
 \ot \zeta^\fm(3_{\ga-i-1},2,3_{\gb-j}) \\
&\, + \gd_{\ga\ge 2n+1} \zeta_1^\fl(3_{2n},2) \ot \zeta^\fm(3_{\ga-2n-1},2,3_{\gb}) \\
&\, +\gd_{\gb\ge 2n+1} \big(\zeta^\fl(3_{2n+1})-\zeta_1^\fl(3_{2n},2) \big)
 \ot \zeta^\fm(3_{\ga},2,3_{\gb-2n-1}).
\end{align*}
Therefore
\begin{align*}
&\, \sum_{\ga+\gb=k} (-1)^\ga D_{6n+3}\zeta^\fm(3_{\ga},2,3_{\gb}) \\
=&\, \sum_{\substack{\ga+\gb=k,i+j=2n\\ \ga-1\ge i\ge 0, \gb\ge j\ge 1}} (-1)^\ga \big(\zeta_1^\fl(3_i,2,3_j)-\zeta_1^\fl(3_{j-1},2,3_{i+1}) \big)
 \ot \zeta^\fm(3_{\ga-i-1},2,3_{\gb-j}) \\
&\,  +\sum_{\ga+\gb=k}  (-1)^\ga \gd_{\ga\ge 2n+1} \zeta_1^\fl(3_{2n},2) \ot \zeta^\fm(3_{\ga-2n-1},2,3_{\gb}) \\
&\,  +\sum_{\ga+\gb=k} (-1)^\ga \gd_{\gb\ge 2n+1} \big(\zeta^\fl(3_{2n+1})-\zeta_1^\fl(3_{2n},2) \big)
 \ot \zeta^\fm(3_{\ga},2,3_{\gb-2n-1})\\
=&\, \sum_{\substack{i+j=2n,i\ge0,j\ge1\\ \ga+\gb=k-2n-1}} (-1)^{a+i+1} \big(\zeta_1^\fl(3_i,2,3_j)-\zeta_1^\fl(3_{j-1},2,3_{i}) \big)
 \ot \zeta^\fm(3_{\ga},2,3_{\gb}) \\
&\, +\sum_{\ga+\gb=k-2n-1}(-1)^\ga  \big(\zeta^\fl(3_{2n+1})-2\zeta_1^\fl(3_{2n},2) \big)
 \ot \zeta^\fm(3_{\ga},2,3_{\gb})\\
=&\, -2\sum_{\substack{i+j=2n,i\ge0,j\ge1\\ \ga+\gb=k-2n-1}} (-1)^{a+i} \zeta_1^\fl(3_i,2,3_j)
 \ot \zeta^\fm(3_{\ga},2,3_{\gb}) \\
&\, +\sum_{\ga+\gb=k-2n-1}(-1)^\ga   \big(\zeta^\fl(3_{2n+1})-2\zeta_1^\fl(3_{2n},2) \big)
 \ot \zeta^\fm(3_{\ga},2,3_{\gb}) \\
=&\,\left(\zeta^\fl(3_{2n+1})-2\sum_{i+j=2n} (-1)^{i} \zeta_1^\fl(3_i,2,3_j) \right)
 \ot  \sum_{\ga+\gb=k-2n-1} (-1)^\ga  \zeta^\fm(3_{\ga},2,3_{\gb})  \\
= &\,\zeta^\fl(\{\bar2,1\}_{2n+1}) \ot \zeta^\fm(\{\bar2,1\}_{k-2n-1},\bar2)
\end{align*}
 by inductive assumption. By comparing with  \eqref{equ:inductUsebar21bar2}
we see that \eqref{equ:D_rbar21bar2} holds for $r=6n+3$.

\subsubsection{$r=6n+5$}\label{sec:D_6n+5-bar21bar2}
It is obvious that if $r=6n+5$ then the left-hand side of \eqref{equ:D_rbar21bar2} vanishes
since $\rho(\{\bar2,1\}_k,\bar2)$
has period 6 so that every cut of $D_{6n+5}$ starts and ends with the same number.

Similar to the case $r=6n+3$, there are two kinds of cuts for $D_{6n+5}\zeta^\fm(3_{\ga},2,3_{\gb})$:
(i) those involve the component $\rho(2)$ and (ii) those don't. All cuts in (ii)
together with the cut starting at the 0 in $\rho(2)$ clearly vanish by periodicity.
For type (ii) cuts we consider the following picture

\begin{center}
\begin{tikzpicture}[scale=0.9]
\node (A13) at (-3,0) {$0;$};
\node (A4) at (-2.6,0) {$1,$};
\node (A2) at (-2.2,0) {$0,$};
\node (A3) at (-1.8,0) {$0,$};
\draw[dashed] (-1.6,0.7) to (-1.6,-0.7);
\node (A4) at (-1.4,0) {$1,$};
\node (A5) at (-1,0) {$0,$};
\node (A6) at (-0.6,0) {$0,$};
\draw[dashed] (-0.2,0.7) to (-0.2,-0.7);
\node (A7) at (0.2,0) {$\cdots\!,$};
\draw[dashed] (0.6,0.7) to (0.6,-0.7);
\node (A8) at (0.85,0) {$1,$};
\node (A8) at (1.25,0) {$0,$};
\node (A9) at (1.65,0) {$0,$};
\node (A9) at (2.8,0.04) {$\{1,0,0\}_i,$};
\node (A10) at (4.05,0) {$1,$};
\node (A11) at (4.45,0) {$0,$};
\node (A9) at (5.65,0.04) {$\{1,0,0\}_j,$};
\node (A12) at (6.85,0) {$1,$};
\node (A13) at (7.25,0) {$0,$};
\node (A13) at (8.05,0) {$0,\cdots\!,$};
\node (A14) at (8.85,0) {$1,$};
\node (A14) at (9.25,0) {$0,$};
\node (A14) at (9.65,0) {$0;$};
\node (A15) at (10.05,0) {$1\phantom{,}$};

\node (C1) at (3.7,0.4) {${}$};
\node (D2) at (4.2,-0.4) {${}$};
\draw (2.2,0.25) to (2.2,0.4) to (C1) node {$\cic{2}$} to (7.2,0.4)  to (7.2,0.25);
\draw (1.6,-0.25) to (1.6,-0.4) to (D2) node {$\cic{1}$} to (6.8,-0.4)  to (6.8,-0.25);
\draw[dashed] (1.85,0.7) to (1.85,-0.7);
\draw[dashed] (3.8,0.7) to (3.8,-0.7);
\draw[dashed] (4.65,0.7) to (4.65,-0.7);
\draw[dashed] (6.6,0.7) to (6.6,-0.7);
\draw[dashed] (7.9,0.7) to (7.9,-0.7);
\draw[dashed] (8.7,0.7) to (8.7,-0.7);
\draw[dashed] (9.84,0.7) to (9.84,-0.7);
\node (C1) at (3.25,-1.2) {Type (i) cuts of $D_{6n+5}\zeta^\fm(3_{\ga},2,3_{\gb}), i+j=2n+1$, starting before 0 in $\rho(2)=10$};
\end{tikzpicture}
\end{center}
Then we see immediately that cut  $\ncic{1}\,${} with index $(i,j)$ shown as in the picture is canceled by
the cut $\ncic{2}\,${} with index $(j,i)$ by path reversal. Note that we allow $\ncic{2}\,${} to start at the number 1 in $\rho(2)=10$
but not after.

To summarize, \eqref{equ:D_rbar21bar2} holds (actually vanishes on both sides) for $r=6n+5$.

\subsubsection{$r=6n+7$}
By periodicity, each cut of $D_{6n+7}$ straddling over $\rho(2)$ must start and end with the same number and must vanish.
For all the other cut we may use the pictures \eqref{fig:r=6n+7-bar21} and \eqref{fig:r=6n+7-333} to conclude that
all terms in \eqref{equ:bar21MotivicN} vanish under the derivation $D_{6n+7}$.

In summary, the equation \eqref{equ:D_rbar21bar2} holds for all odd $r<3\ell$. By Lemma~\ref{lem:D1} and Theorem~\ref{thm-Glanois}
we see that  there is some $c_\ell\in\Q$ such that
\begin{equation*}
2^{3\ell+1}\zeta^\fm(\{\bar2,1\}_k,\bar2)=c_k  \zeta^\fm(3k+2)-\sum_{\ga+\gb=k} (-1)^\ga \zeta^\fm(3_{\ga},2,3_{\gb})
\end{equation*}
By applying the period map \eqref{equ:periodMap} we see that $c_k=0$. This
completes our inductive proof of \eqref{equ:bar21bar2MotivicN}, namely, \eqref{equ:bar21bar2Motivic}.

\subsection{Inductive proof of \eqref{equ:bar21MotivicN}}
 We now prove that
\begin{equation}\label{equ:D_rbar21k}
 2^{3\ell}D_r \zeta^\fm(\{\bar2,1\}_k)=
D_r \zeta^\fm(3_k)-2\sum_{\ga+\gb=k-1} (-1)^{\ga} D_r \zeta_1^\fm(3_{\ga},2,3_{\gb})
\end{equation}
for all positive integers $r=6n+3, 6n+5, 6n+7<3k$.

\subsubsection{$r=6n+3$}\label{sec:D_6n+3-bar21}
For $r=6n+3$ we get the following picture when $k$ is even (the case of odd $k$ can be dealt with similarly)
by inserting another 1 to the left of the last 1
and removing the $\bar101$ to the right of the initial 0 in picture \eqref{fig:D_6n+3-bar21bar2}:
\begin{equation}\label{fig:D_6n+3-bar21}
\text{
\begin{tikzpicture}[scale=0.9]
\node (A4) at (-2.6,0) {$0;$};
\node (A2) at (-2.2,0) {$1,$};
\node (A3) at (-1.8,0) {$0,$};
\draw[dashed] (-1.6,0.7) to (-1.6,-0.7);
\node (A4) at (-1.4,0) {$\bar1,$};
\node (A5) at (-1,0) {$\bar1,$};
\node (A6) at (-0.6,0) {$0,$};
\draw[dashed] (-0.3,0.7) to (-0.3,-0.7);
\node (A7) at (0.1,0) {$\cdots\!,$};
\draw[dashed] (0.5,0.7) to (0.5,-0.7);
\node (A8) at (0.7,0) {$1,$};
\node (A8) at (1.1,0) {$1,$};
\node (A9) at (1.5,0) {$0,$};
\node (A9) at (2.75,0.04) {$\{\bar1\bar10110\}_n$};
\node (A10) at (4.05,0) {$\bar1,$};
\node (A11) at (4.45,0) {$\bar1,$};
\node (A11) at (4.85,0) {$0,$};
\node (A12) at (5.25,0) {$1,$};
\node (A13) at (5.65,0) {$1,$};
\node (A13) at (6.45,0) {$0,\cdots\!,$};
\node (A14) at (7.35,0) {$\bar1,$};
\node (A14) at (7.75,0) {$\bar1,$};
\node (A14) at (8.15,0) {$0,$};
\node (A15) at (8.55,0) {$1;$};
\node (A15) at (8.95,0) {$1\phantom{,}$};
\node (C1) at (2.6,-0.4) {${}$};
\node (D4) at (3,0.6) {${}$};
\node (D3) at (3.4,-0.6) {${}$};
\draw (0.65,-0.25) to (0.65,-0.4) to (C1) node {$\cic{1}$} to (4.4,-0.4)  to (4.4,-0.325);
\draw (1.05,0.25) to (1.05,0.6) to (D4) node {$\cic{2}$} to (4.8,0.6)  to (4.8,0.25);
\draw (1.45,-0.25) to (1.45,-0.6) to (D3) node {$\cic{3}$} to (5.2,-0.6)  to (5.2,-0.25);
\draw[dashed] (1.75,0.7) to (1.75,-0.7);
\draw[dashed] (3.8,0.7) to (3.8,-0.7);
\draw[dashed] (6.25,0.7) to (6.25,-0.7);
\draw[dashed] (5.05,0.7) to (5.05,-0.7);
\draw[dashed] (7.1,0.7) to (7.1,-0.7);
\draw[dashed] (8.35,0.7) to (8.35,-0.7);
\node (C1) at (2.25,-1.2) {Possible cuts of $D_{6n+3}\zeta^\fm(\{\bar2,1\}_k)$};
\end{tikzpicture}}
\end{equation}
Using the same argument as for \eqref{fig:D_6n+3-bar21bar2} we get
\begin{equation} \label{equ:D_6n+3-bar21k}
D_{6n+3}\zeta^\fm(\{\bar2,1\}_k)
=\zeta^\fl(\{\bar2,1\}_{2n+1}) \ot \zeta^\fm(\{\bar2,1\}_{k-2n-1}).
\end{equation}

On the other hand, by \eqref{equ:D_r1bar2}
\begin{equation*}
 D_{6n+3} \zeta^\fm(3_k)= \zeta^\fl(3_{2n+1})\ot \zeta^\fm(3_{k-2n-1}).
\end{equation*}

Further, for any fixed $\ga+\gb=k-1$, $D_{6n+3}\zeta_1^\fm(3_{\ga},2,3_{\gb})$ can be computed
similarly as in \S\ref{sec:D_6n+3-3a23b} according to whether the cuts involve the component $\rho(2)$ or not.
By adding one more 0 at the beginning of pictures \eqref{fig:D_6n+3-3a23bType(i)} and  \eqref{fig:D_6n+3-3a23b}
we find that there will be one more term of cut $\ncic{2}\,${} starting at the very first 0 giving rise to
the additional term
\begin{equation*}
 \zeta_1^\fl(3_{\ga},2,3_{2n-\ga})\ot   \zeta^\fm(3_{k-2n-1}).
\end{equation*}
Therefore,  by induction assumption,
\begin{align*}
&\, \sum_{\ga+\gb=k-1} (-1)^\ga D_{6n+3}\zeta_1^\fm(3_{\ga},2,3_{\gb}) \\
= &\,
\left(\zeta^\fl(3_{2n+1})-2\sum_{i+j=2n} (-1)^{i} \zeta_1^\fl(3_i,2,3_j) \right)
 \ot \sum_{\ga+\gb=k-2n-2} (-1)^\ga \zeta_1^\fm(3_{\ga},2,3_{\gb})  \\
&\,+\sum_{\ga+\gb=2n} (-1)^\ga  \zeta_1^\fl(3_{\ga},2,3_{\gb})\ot   \zeta^\fm(3_{k-2n-1}) \\
=&\, 8^{2n+1}\zeta^\fl(\{\bar2,1\}_{2n+1}) \ot \sum_{\ga+\gb=k-2n-2} (-1)^\ga \zeta_1^\fm(3_{\ga},2,3_{\gb})
+\sum_{\ga+\gb=2n} (-1)^\ga  \zeta_1^\fl(3_{\ga},2,3_{\gb})\ot   \zeta^\fm(3_{k-2n-1}).
\end{align*}
Thus by induction again,
\begin{align*}
&\, D_{6n+3}\zeta^\fm(3_k)-2\sum_{\ga+\gb=k-1} (-1)^\ga D_{6n+3}\zeta_1^\fm(3_{\ga},2,3_{\gb}) \\
= &\,
8^{2n+1} \zeta^\fl(\{\bar2,1\}_{2n+1}) \ot \left(-2\sum_{\ga+\gb=k-2n-2} (-1)^\ga \zeta_1^\fm(3_{\ga},2,3_{\gb})\right)
+8^{2n+1} \zeta^\fl(\{\bar2,1\}_{2n+1})   \ot   \zeta^\fm(3_{k-2n-1}) \\
=&\,8^k \zeta^\fl(\{\bar2,1\}_{2n+1}) \ot \zeta_1^\fm(\{\bar2,1\}_{k-2n-1}).
\end{align*}
Comparing to \eqref{equ:D_6n+3-bar21k} we see immediately that \eqref{equ:D_rbar21k} holds for $r=6n+3$.

\subsubsection{$r=6n+5$}
Exactly the same argument as in \S\ref{sec:D_6n+5-bar21bar2} works almost word-for-word, implying that
both sides of \eqref{equ:D_rbar21k} vanish if $r=6n+5$.

\subsubsection{$r=6n+7$}\label{sec:D_6n+7-bar21}
By modifying the pictures \eqref{fig:r=6n+7-bar21} and \eqref{fig:r=6n+7-333}
we find that \eqref{equ:D_rbar21k} holds for $r=6n+7$.

\medskip
Combining all the results in \S\ref{sec:D_6n+3-bar21}-\S\ref{sec:D_6n+7-bar21} we see that
by Lemma~\ref{lem:D1} and Theorem~\ref{thm-Glanois} there is some $d_\ell\in\Q$ such that
\begin{equation*} 
 2^{3\ell}D_r \zeta^\fm(\{\bar2,1\}_k)=d_\ell  \zeta^\fm(3k)+
D_r \zeta^\fm(3_k)-2\sum_{\ga+\gb=k-1} (-1)^{\ga} D_r \zeta_1^\fm(3_{\ga},2,3_{\gb})
\end{equation*}
By applying the period map \eqref{equ:periodMap} we see that $d_k=0$. This
completes our inductive proof of \eqref{equ:bar21MotivicN}, namely, \eqref{equ:bar21Motivic}.

\section{Proof of \eqref{equ:12bar1Motivic} in Theorem \ref{thm:bar21bar2Motivic}}
Assuming Conjecture~\ref{conj:bar21bar2} holds we finally show that for all $\ell\in\N$,  
\begin{equation} \label{equ:12bar1MotivicN}
 2^{3\ell-1}\zeta^\fm(\{1,\bar2\}_\ell,1)= -3\sum_{\ga+\gb=\ell, 2\nmid \gb}\zeta^\fm(3_{\ga},1,3_{\gb})
\end{equation}
by using induction on $\ell$.

\subsection{The base case}
If $\ell=1$ then clearly $D_1 \zeta^\fm(1,\bar2,1)=D_1 \zeta^\fm(1,3)=0$ by Lemma~\ref{lem:D1}.
To consider the action of the derivation $D_3$ on \eqref{equ:12bar1MotivicN} we may look at the following pictures:
\begin{center}
\begin{tikzpicture}[scale=0.9]
\node (A0) at (0.05,0) {$0;$};
\node (A1) at (0.45,0) {$\bar1,$};
\node (A2) at (0.85,0) {$\bar1,$};
\node (A3) at (1.25,0) {$0,$};
\node (A4) at (1.65,0) {$1;$};
\node (A5) at (2.05,0) {$1\phantom{,}$};
\node (B3) at (1.2,0.4) {${}$};
\node (D4) at (0.8,-0.4) {${}$};
\draw (0.4,0.25) to (0.4,0.4) to (B3) node {$\cic{2}$} to (2.0,0.4)  to (2.0,0.25);
\draw (0,-0.25) to (0,-0.4) to (D4) node {$\cic{1}$} to (1.6,-0.4)  to (1.6,-0.25);
\node (lab) at (.7,-1) {$D_3\zeta^\fm(1,\bar2,1)=0$};
\end{tikzpicture}
\qquad
\begin{tikzpicture}[scale=0.9]
\node (A0) at (0.05,0) {$0;$};
\node (A1) at (0.45,0) {$1,$};
\node (A2) at (0.85,0) {$1,$};
\node (A3) at (1.25,0) {$0,$};
\node (A4) at (1.65,0) {$0;$};
\node (A5) at (2.05,0) {$1\phantom{,}$};
\node (lab) at (1,-1) {$D_3\zeta^\fm(1,3)=0$};
\end{tikzpicture}
\end{center}
We find that $D_3\zeta^\fm(1,3)=0$ since there is no nonzero cut, $\ncic{2}=0$ by anti-symmetry and
$$D_3\zeta^\fm(1,\bar2,1)=\ncic{1}=I^\fl(0;\bar1\bar10;1)\ot I^\fm(0;1;1)=0.
$$
Hence by Lemma~\ref{lem:D1} and Theorem~\ref{thm-Glanois} there is some $e_1\in\Q$ such that $4\zeta^\fm(1,\bar2,1)=e_1\zeta^\fm(4)-3\zeta^\fm(1,3)$
and thus \eqref{equ:12bar1MotivicN} holds for $\ell=1$.

In fact, by the table of values contained in \cite[Appendix E.1]{ZhaoBook},
$\zeta(1,3)=-16A-4B$ where $A=\zeta(1,1,\bar2)$ and $B=\zeta(2,\bar2)$. Further, the stuffle regularized value
\begin{equation*}
\zeta_*(1,\bar2,1)=-2\zeta(1,1,\bar2)-\zeta(2,\bar2)-\zeta(1,\bar3)=-12A-3B.
\end{equation*}
Hence
\begin{equation}\label{equ:1bar21}
4\zeta_*(1,\bar2,1)=-3\zeta(1,3).
\end{equation}
By applying the period and using \eqref{equ:1bar21} we get $e_1=0$ .

\subsection{The inductive step}
We now assume that \eqref{equ:12bar1MotivicN} holds for $\ell<k$ for some $k\ge 2$. We will now prove
that
\begin{equation}\label{equ:D_r1bar2k1}
2^{3\ell-1}D_r\zeta^\fm(\{1,\bar2\}_k,1)= -3\sum_{\ga+\gb=k, 2\nmid \gb}D_r\zeta^\fm(3_{\ga},1,3_{\gb})
\end{equation}
for all positive integers $r=6n+3, 6n+5, 6n+7<3k$.

\subsubsection{$r=6n+3$}\label{sec:D_6n+3-1bar2k1}
For $r=6n+3$ we get the following picture when $k$ is even (the case of odd $k$ can be dealt with similarly)
by inserting another 1 to the right of the initial 0 in picture \eqref{fig:D_6n+3-bar21}:
\begin{equation}\label{fig:D_6n+3bar21}
\text{
\begin{tikzpicture}[scale=0.9]
\node (A4) at (-3,0) {$0;$};
\node (A4) at (-2.6,0) {$1,$};
\node (A2) at (-2.2,0) {$1,$};
\node (A3) at (-1.8,0) {$0,$};
\draw[dashed] (-1.6,0.7) to (-1.6,-0.7);
\node (A4) at (-1.4,0) {$\bar1,$};
\node (A5) at (-1,0) {$\bar1,$};
\node (A6) at (-0.6,0) {$0,$};
\draw[dashed] (-0.3,0.7) to (-0.3,-0.7);
\node (A7) at (0.1,0) {$\cdots\!,$};
\draw[dashed] (0.45,0.7) to (0.45,-0.7);
\node (A8) at (0.7,0) {$1,$};
\node (A8) at (1.1,0) {$1,$};
\node (A9) at (1.5,0) {$0,$};
\node (A9) at (2.75,0.04) {$\{\bar1\bar10110\}_n$};
\node (A10) at (4.05,0) {$\bar1,$};
\node (A11) at (4.45,0) {$\bar1,$};
\node (A11) at (4.85,0) {$0,$};
\node (A12) at (5.25,0) {$1,$};
\node (A13) at (5.65,0) {$1,$};
\node (A13) at (6.45,0) {$0,\cdots\!,$};
\node (A14) at (7.35,0) {$\bar1,$};
\node (A14) at (7.75,0) {$\bar1,$};
\node (A14) at (8.15,0) {$0,$};
\node (A15) at (8.55,0) {$1;$};
\node (A15) at (8.95,0) {$1\phantom{,}$};
\node (C1) at (2.6,-0.4) {${}$};
\node (D4) at (2.9,0.6) {${}$};
\node (D3) at (3.3,-0.6) {${}$};
\draw (0.65,-0.25) to (0.65,-0.4) to (C1) node {$\cic{1}$} to (4.4,-0.4)  to (4.4,-0.325);
\draw (1.05,0.25) to (1.05,0.6) to (D4) node {$\cic{2}$} to (4.8,0.6)  to (4.8,0.25);
\draw (1.45,-0.25) to (1.45,-0.6) to (D3) node {$\cic{3}$} to (5.2,-0.6)  to (5.2,-0.25);
\draw[dashed] (1.7,0.7) to (1.7,-0.7);
\draw[dashed] (3.8,0.7) to (3.8,-0.7);
\draw[dashed] (6.25,0.7) to (6.25,-0.7);
\draw[dashed] (5.05,0.7) to (5.05,-0.7);
\draw[dashed] (7.1,0.7) to (7.1,-0.7);
\draw[dashed] (8.35,0.7) to (8.35,-0.7);
\node (C1) at (2.25,-1.2) {Possible cuts of $D_{6n+3}\zeta^\fm(\{1,\bar2\}_k,1)$};
\end{tikzpicture}}
\end{equation}
By the same argument as for \eqref{fig:D_6n+3-bar21bar2} $\ncic{1}=0${} by anti-symmetry
and $\ncic{2}+\ncic{3}=0${} by path reversal, except for the cut $\ncic{3}\,${} starting
at the initial 0. Hence
\begin{equation} \label{equ:D_6n+3-1bar2k}
D_{6n+3}\zeta^\fm(\{1,\bar2\}_k,1)
=\zeta^\fl(\{1,\bar2\}_{2n+1}) \ot \zeta^\fm(\{1,\bar2\}_{k-2n-1},1).
\end{equation}

We note that there are two kinds of cuts for $D_r \zeta^\fm(3_{\ga},1,3_{\gb})$:
(i) those that don't involve the component $\rho(1)$ and (ii) those that do.
By periodicity, each type (ii) cut starts and ends with the same number so that it vanishes.
Further, as shown in \eqref{fig:D_6n+3-3a23bType(i)} type (i) has only one nontrivial contribution
\begin{equation}\label{equ:type(i)Contr313}
\zeta^\fl(3_{2n+1})\ot \zeta^\fm(3_{\ga-2n-1},1,3_{\gb}).
\end{equation}
Thus,
\begin{align*}
 -3\sum_{\ga+\gb=k, 2\nmid \gb}  D_{6n+3}\zeta^\fm(3_{\ga},1,3_{\gb})
 =&\, \zeta^\fl(3_{2n+1})\ot \left(-3\sum_{\ga+\gb=k, 2\nmid \gb}\zeta^\fm(3_{\ga-2n-1},1,3_{\gb})\right)\\
= &\,2^{3(k-2n-1)-1}  \zeta^\fl(3_{2n+1})\ot \zeta^\fm(\{1,\bar2\}_{k-2n-1},1) \\
= &\,2^{3k-1}  \zeta^\fl(\{1,\bar2\}_{2n+1})\ot \zeta^\fm(\{1,\bar2\}_{k-2n-1},1)
\end{align*}
by inductive assumption and \eqref{equ:1bar2Motivic}. Comparing to \eqref{equ:D_6n+3-1bar2k} we see that
\eqref{equ:D_r1bar2k1} holds for $r=6n+3$.

\subsubsection{$r=6n+5$}\label{sec:D_6n+5-1bar2k1}
It is obvious that the left-hand side of \eqref{equ:D_r1bar2k1} vanishes
for $r=6n+5$ since $\rho(\{1,\bar2\}_k,1)$
have period 6 so that every cut of $D_{6n+5}$ starts and ends with the same number.

Similar to the case $r=6n+3$, there are two kinds of cuts for $D_{6n+5}\zeta^\fm(3_{\ga},1,3_{\gb})$:
(i) those involve the component $\rho(1)$ and (ii) those don't. All cuts in (ii)
clearly vanish by periodicity. For type (ii) cuts we consider the following picture:

\begin{tikzpicture}[scale=0.9]
\node (A13) at (-3,0) {$0;$};
\node (A4) at (-2.6,0) {$1,$};
\node (A2) at (-2.2,0) {$0,$};
\node (A3) at (-1.4,0) {$0\cdots,$};
\draw[dashed] (-0.8,0.7) to (-0.8,-0.7);
\node (A4) at (-0.4,0) {$1,$};
\node (A6) at (-0.0,0) {$0,$};
\node (A5) at (0.4,0) {$0,$};
\draw[dashed] (0.6,0.7) to (0.6,-0.7);
\node (A8) at (0.85,0) {$1,$};
\node (A8) at (1.25,0) {$0,$};
\node (A9) at (1.65,0) {$0,$};
\node (A9) at (2.8,0.04) {$\{1,0,0\}_i,$};
\node (A10) at (4.05,0) {$1,$};
\node (A9) at (5.25,0.04) {$\{1,0,0\}_j,$};
\node (A12) at (6.45,0) {$1,$};
\node (A13) at (6.85,0) {$0,$};
\node (A13) at (7.25,0) {$0,$};
\node (A12) at (7.65,0) {$1,$};
\node (A13) at (8.05,0) {$0,$};
\node (A13) at (8.85,0) {$0,\cdots\!,$};
\node (A14) at (9.65,0) {$1,$};
\node (A14) at (10.05,0) {$0,$};
\node (A14) at (10.45,0) {$0;$};
\node (A15) at (10.85,0) {$1\phantom{,}$};
\node (C1) at (3.5,-0.4) {${}$};
\node (D2) at (4.4,0.4) {${}$};
\node (D3) at (2.8,-0.6) {${}$};
\node (D4) at (5.1,0.6) {${}$};
\draw (0.8,-0.25) to (0.8,-0.4) to (C1) node {$\cic{1}$} to (7.2,-0.4)  to (7.2,-0.325);
\draw (1.2,0.25) to (1.2,0.4) to (D2) node {$\cic{2}$} to (7.6,0.4)  to (7.6,0.25);
\draw (0,-0.25) to (0,-0.6) to (D3) node {$\cic{3}$} to (4.7,-0.6)  to (4.7,-0.25);
\draw (4.0,0.25) to (4.0,0.6) to (D4) node {$\cic{4}$} to (8.4,0.6)  to (8.4,0.25);
\draw[dashed] (1.85,0.7) to (1.85,-0.7);
\draw[dashed] (3.8,0.7) to (3.8,-0.7);
\draw[dashed] (4.25,0.7) to (4.25,-0.7);
\draw[dashed] (6.2,0.7) to (6.2,-0.7);
\draw[dashed] (7.45,0.7) to (7.45,-0.7);
\draw[dashed] (8.7,0.7) to (8.7,-0.7);
\draw[dashed] (9.4,0.7) to (9.4,-0.7);
\draw[dashed] (10.7,0.7) to (10.7,-0.7);
\node (C1) at (3.25,-1.2) {Type (ii) cuts of $D_{6n+5}\zeta^\fm(3_{\ga},1,3_{\gb}), i+j=2n$, starting before/at $\rho(1)$};
\end{tikzpicture}

We see immediately that cut  $\ncic{1}\,${} with index $(i,j)$ shown as in the picture is canceled by
the cut $\ncic{2}\,${} with index $(j,i)$ by path reversal.
The two special cases denoted by $\ncic{3}\,${} and $\ncic{4}\,${} with $i=j=2n$ cancel each other by path reversal.
Thus both sides of \eqref{equ:D_r1bar2k1} vanish if $r=6n+5$.

\subsubsection{$r=6n+7$}\label{sec:D_6n+7-1bar2k1}
By attaching one more 1 to the right end of the picture \eqref{fig:r=6n+7-bar21}  and \eqref{fig:r=6n+7-333}
exactly the same argument shows that $D_{6n+7}\zeta^\fm(\{1,\bar2\}_k,1)=0$
and all cuts for $D_{6n+5}\zeta^\fm(3_{\ga},1,3_{\gb})$ that do not involve $\rho(1)$ cancel each other.

For cuts of $D_{6n+5}\zeta^\fm(3_{\ga},1,3_{\gb})$ that involve the component $\rho(1)$ we consider the following picture:

\begin{tikzpicture}[scale=0.9]
\node (A3) at (-2.75,0) {$0;1,0,0,\cdots\!,1,0,0,$};
\draw[dashed] (-2.9,0.7) to (-2.9,-0.7);
\draw[dashed] (-2.2,0.7) to (-2.2,-0.7);
\draw[dashed] (-0.8,0.7) to (-0.8,-0.7);
\node (A4) at (-0.4,0) {$1,$};
\node (A6) at (-0.0,0) {$0,$};
\node (A5) at (0.4,0) {$0,$};
\draw[dashed] (0.6,0.7) to (0.6,-0.7);
\node (A8) at (0.85,0) {$1,$};
\node (A8) at (1.25,0) {$0,$};
\node (A9) at (1.65,0) {$0,$};
\node (A9) at (2.8,0.04) {$\{1,0,0\}_i,$};
\node (A10) at (4.05,0) {$1,$};
\node (A9) at (5.25,0.04) {$\{1,0,0\}_j,$};
\node (A12) at (6.45,0) {$1,$};
\node (A13) at (6.85,0) {$0,$};
\node (A13) at (7.25,0) {$0,$};
\node (A12) at (7.65,0) {$1,$};
\node (A13) at (8.05,0) {$0,$};
\node (A13) at (8.45,0) {$0,$};
\node (A12) at (8.85,0) {$1,$};
\node (A13) at (9.25,0) {$0,$};
\node (A13) at (10.05,0) {$0,\cdots\!,$};
\node (A14) at (10.85,0) {$1,$};
\node (A14) at (11.25,0) {$0,$};
\node (A14) at (11.65,0) {$0;$};
\node (A15) at (12.05,0) {$1\phantom{,}$};

\node (C1) at (3.9,-0.4) {${}$};
\node (D2) at (5.2,0.4) {${}$};
\node (D3) at (2.7,-0.6) {${}$};
\node (D4) at (5.5,0.6) {${}$};
\draw (0.8,-0.25) to (0.8,-0.4) to (C1) node {$\cic{1}$} to (8,-0.4)  to (8,-0.325);
\draw (1.6,0.25) to (1.6,0.4) to (D2) node {$\cic{2}$} to (8.8,0.4)  to (8.8,0.25);
\draw (-1,-0.25) to (-1,-0.6) to (D3) node {$\cic{3}$} to (4.7,-0.6)  to (4.7,-0.25);
\draw (4.0,0.25) to (4.0,0.6) to (D4) node {$\cic{4}$} to (9.2,0.6)  to (9.2,0.25);
\draw[dashed] (1.85,0.7) to (1.85,-0.7);
\draw[dashed] (3.8,0.7) to (3.8,-0.7);
\draw[dashed] (4.25,0.7) to (4.25,-0.7);
\draw[dashed] (6.2,0.7) to (6.2,-0.7);
\draw[dashed] (7.45,0.7) to (7.45,-0.7);
\draw[dashed] (8.65,0.7) to (8.65,-0.7);
\draw[dashed] (9.9,0.7) to (9.9,-0.7);
\draw[dashed] (10.63,0.7) to (10.63,-0.7);
\draw[dashed] (11.9,0.7) to (11.9,-0.7);
\node (C1) at (3.25,-1.2) {Type (ii) cuts of $D_{6n+7}\zeta^\fm(3_{\ga},1,3_{\gb}), i+j=2n$, starting before/at $\rho(1)$};
\end{tikzpicture}

We see immediately that cut  $\ncic{1}\,${} with index $(i,j)$ shown as in the picture is canceled by
the cut $\ncic{2}\,${} with index $(j,i)$ by path reversal.
The two special cases denoted by $\ncic{3}\,${} and $\ncic{4}\,${} with $i=j=2n$ cancel each other by path reversal.
Thus both sides of \eqref{equ:D_r1bar2k1} vanish if $r=6n+7$.

Combining all the results in \S\ref{sec:D_6n+3-1bar2k1}-\S\ref{sec:D_6n+7-1bar2k1}, we see that
by Lemma~\ref{lem:D1} and Theorem~\ref{thm-Glanois}, there is some $e_k\in\Q$ such that
\begin{equation*}
 2^{3k-1}\zeta^\fm(\{1,\bar2\}_k,1)=e_k\zeta^\fm(3k+1) -3\sum_{\ga+\gb=k, 2\nmid \gb}\zeta^\fm(3_{\ga},1,3_{\gb}).
\end{equation*}
By applying the period map \eqref{equ:periodMap} we see that $e_k=0$. This shows that \eqref{equ:12bar1MotivicN}, 
namely, \eqref{equ:12bar1Motivic} holds for all $\ell\in\N$ which is the last statement in Theorem~\ref{thm:bar21bar2Motivic}.
Therefore we have completed the proof of Theorem~\ref{thm:bar21bar2Motivic}.

\bigskip
\noindent{\bf Funding.} Ce Xu is supported by the National Natural Science Foundation of China (Grant No. 12101008), the Natural Science Foundation of Anhui Province (Grant No. 2108085QA01) and the University Natural Science Research Project of Anhui Province (Grant No. KJ2020A0057). Jianqiang Zhao is supported by the Jacobs Prize from The Bishop's School.

\bigskip
\noindent{\bf Acknowledgments.} Both authors would like to thank Prof. F. Xu at the Capital Normal University and Prof. C. Bai at the Chern Institute of Mathematics for their warm hospitality where this work was initiated. They also want to thank the anonymous referee for carefully reading their manuscript and providing some very useful comments and suggestions which helped improve both the quality and the clarity of the paper.

\bigskip

\noindent{\bf Declaration of interest.} The authors have no competing interests to declare.

\end{document}